\newcommand{\issue}[1]{\ no.#1,\ }
\newcommand{\publ}{\ }
\newcommand{\inbook}{\ }
\newcommand{\out}[1]{\ }
\newcommand{\Cal}{\mathcal}
\let\cal\mathcal
\newcommand{\DD}{{\mathbb D}}
\newcommand{\calf}{{\mathcal F}}
\newcommand{\QQ}{{\mathbb Q}}
\newcommand{\RR}{{\mathbb R}}
\newcommand{\CC}{{\mathbb C}}
\newcommand{\ZZ}{{\mathbb Z}}
\renewcommand{\ge}{\geqslant}
\renewcommand{\le}{\leqslant}
\renewcommand{\phi}{\varphi}
\let\Bbb\mathbb
\let\comp\complement
\newcommand{\eps}{\varepsilon}
\DeclareMathOperator{\PSH}{PSH}
\DeclareMathOperator{\SH}{SH}
\DeclareMathOperator{\fSH}{f-SH}
\DeclareMathOperator{\flimsup}{f-\limsup}
\DeclareMathOperator{\FPSH}{\calf-PSH}
\DeclareMathOperator{\Capa}{Cap}
\newtheorem{theorem}{Theorem}[section]
\newtheorem{corollary}[theorem]{Corollary}
\newtheorem{lemma}[theorem]{Lemma}
\newtheorem{prop}[theorem]{Proposition}
\newtheorem{conjecture}[theorem]{Conjecture}
\theoremstyle{definition}
\newtheorem{definition}[theorem]{Definition}
\theoremstyle{remark}
\newtheorem{remark}[theorem]{Remark}
\numberwithin{equation}{section}
\begin{document}

\title{Plurifine Potential theory}

\dedicatory{Dedicated to professor J\'ozef Siciak on the occasion of his 80'th birthday.}
\author{Jan Wiegerinck}
\address{KdV Institute for Mathematics
\\University of Amsterdam
\\Science Park 904
\\P.O. box 94248, 1090 GE Amsterdam
\\The Netherlands}
\email{j.j.o.o.wiegerinck@uva.nl}
\subjclass{31C40, 32U15}
\keywords{plurifinely plurisubharmonic
function, plurifinely holomorphic function, finely subharmonic function, finely holomorphic function, plurifine topology}

\begin{abstract} We give an overview of the recent developments in plurifine pluripotential theory, i.e. the theory of plurifinely plurisubharmonic functions.
\end{abstract}

\maketitle

\section{Introduction}\label{sec0}
The {\it pluri\-fine topology} $\calf$ on (open subsets of) $\CC^n$ was
 introduced by Fuglede in \cite{F6} as the weakest topology in which all
pluri\-sub\-har\-monic functions are con\-tinu\-ous. The concept is analogous to
the H.\ Cartan fine topology on $\Bbb R^n$, in which all subharmonic functions are continuous.  In particular, on $\CC\cong
\RR^2$ fine and plurifine topology coincide. This topology was next considered by Bedford and Taylor \cite{BT2} and they employed it to make precise statements about the convergence of the Monge-Amp\`ere masses associated to sequences of plurisubharmonic functions. The plurifine topology was  further developed in \cite{EW1}, \cite{EW2}. Notions pertaining to the plurifine topology are indicated with the prefix $\calf$ and notions pertaining to the fine topology are indicated with $f$; e.g.,~$f$-open is open in the fine topology. The fine boundary of a set $V$ is denoted by $\partial_f V$.

In one complex variable there is a good theory of finely subharmonic, respectively finely holomorphic, functions, which was largely developed by Fuglede, cf.~\cite{F1,F2,F3}. Once the topology $\calf$ is available, it is natural to try and define \emph{plurifinely plurisubharmonic functions} and \emph{plurifinely holomorphic functions}. It turns out, cf.~  \cite{ElKa,EW2,EW3,EFW11}, that there are two reasonable ways of extending the concept of finely subharmonic, respectively finely holomorphic, functions to several complex variables. A weak concept, defined by demanding that restrictions to complex lines are finely subharmonic, respectively holomorphic, and a strong concept based on approximation by ordinary plurisubharmonic, respectively holomorphic, functions.

With these plurifinely plurisubharmonic functions come several questions: which properties of ordinary plurisubharmonic functions transfer to the new concepts, are the weak and strong concept the same, can we get a hold on the structure of plurifinely plurisubharmonic functions? In short: develop plurifine potential theory! In this overview we will discuss what has been achieved so far. We will also describe how results on pluripolar hulls, obtained e.g. by Siciak \cite{Si03}, Zwonek, \cite{Zw05}, Edlund and J\"oricke, \cite{EJ}, and Edigarian, El Marzguioui and the author,  \cite{ EEW, EdW1, EW03, EW04} are clarified with the help of plurifinely plurisubharmonic functions and fine analytic continuation.  In Section \ref{sec3} we will see that certain questions about the structure of pluripolar hulls and complete pluripolar sets are more naturally formulated and partially solved by means of plurifine potential theory.

In general we will refrain from giving proofs and refer to the literature instead. A few exceptions are made, in particular for the results in subsection \ref{Bounded} and \ref{S-polar} that are not in the literature yet. These sections were spurred by pertinent questions by Urban Cegrell and Peter Pflug.

\subsection*{Acknowledgements}
It is with pleasure that I thank the referees for their useful comments, and Iris Smit for carefully reading and suggesting many ameliorations leading to the present paper.

\section{The plurifine Topology}\label{sec1}

The plurifine topology $\calf=\calf(\Omega)$ on a Euclidean open set $\Omega\subset\CC^n$ is the smallest topology that makes all plurisubharmonic functions on $\Omega$, denoted by $\PSH(\Omega)$, continuous. Since plurisubharmonic functions are already upper semi-continuous, a \emph{local subbasis} at $a\in\Omega$ is given by the sets
\begin{equation}
\label{eq2:1}
U(a,B,f)=\{z\in B: f(z)>0\},
\end{equation}
where $B\subset\Omega$ is a ball about $a$, and $f\in\PSH(B)$ with $f(a)>0$.
It is easy to see that if $\Omega_1\subset\Omega$ are open in $\CC^n$, then $\calf(\Omega)$ induces on $\Omega_1$ the topology $\calf(\Omega_1)$. Similarly,  $\calf(\CC^n)$ induces on a complex affine hyperplane $H\cong \CC^k$ the topology  $\calf(\CC^k)$.

\begin{prop}\label{prop2:1}
 The sets $U(a,B, f)$ form a local \emph{basis} for $\calf$.
\end{prop}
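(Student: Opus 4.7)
The plan is to show, by induction on the number of sets, that a finite intersection of subbasic sets $U(a,B_i,f_i)$ all containing $a$ contains a single set of the same form; this, together with the subbasis property, yields that the $U(a,B,f)$ themselves form a local basis at $a$. The inductive step reduces at once to the case of two sets, which is the substantive claim.

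Suppose $U_1 = U(a,B_1,f_1)$ and $U_2 = U(a,B_2,f_2)$ both contain $a$, so that $M_i := f_i(a) > 0$ for $i = 1, 2$. Set $\mu := \tfrac{1}{3}\min(M_1, M_2)$. My first step would be to invoke the upper semicontinuity of each $f_i$ at $a$ to pick a Euclidean ball $B \subset B_1 \cap B_2$ centered at $a$ on which $f_i(z) < M_i + \mu$ for both $i$.

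The second step is to introduce the auxiliary function
\[
g(z) = f_1(z) + f_2(z) - (M_1 + M_2) + \mu,
\]
which is plurisubharmonic on $B$ as a sum of plurisubharmonic functions and a constant, and which satisfies $g(a) = \mu > 0$. If $z \in B$ satisfies $g(z) > 0$, then $f_1(z) + f_2(z) > M_1 + M_2 - \mu$, and the upper bound $f_2(z) < M_2 + \mu$ from the previous step forces $f_1(z) > M_1 - 2\mu > 0$; symmetrically $f_2(z) > 0$. Consequently $U(a,B,g) \subset U_1 \cap U_2$, as required.

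The main conceptual obstacle is that the most natural candidate for the intersection, namely $\min(f_1,f_2)$, is not plurisubharmonic, so one cannot represent $U_1 \cap U_2$ directly by a plurisubharmonic function. The device above sidesteps this by replacing the minimum with the sum $f_1+f_2$, which \emph{is} plurisubharmonic, and by recovering individual positivity of $f_1$ and $f_2$ from positivity of the sum via the upper bounds obtained from upper semicontinuity near $a$. Beyond the bookkeeping involved in the choice of $\mu$ and $B$, I expect no further difficulty.
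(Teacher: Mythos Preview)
Your argument is correct. The paper does not give its own proof of this proposition but merely refers to \cite{BT2} and \cite{EW1,EW2}; the device you use---shrinking the ball via upper semicontinuity to get upper bounds $f_i<M_i+\mu$ and then taking the plurisubharmonic sum $g=f_1+f_2-(M_1+M_2)+\mu$ so that positivity of $g$ forces positivity of each $f_i$---is the standard way to handle this, and is in the spirit of the cited references.
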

 This result was first observed in \cite{BT2}. See \cite{EW1, EW2} for a proof.

We gather some properties of the plurifine topology, cf.~\cite[Chapter 11]{D}.
\begin{theorem}[Properties of the plurifine topology $\calf$]\hfill
\begin{enumerate}
\item $\calf$ is \emph{quasi-Lindel\"of}, that is, every arbitrary union of $\calf$-open sets is the union of a countable subunion and a pluripolar set.
\item $\calf$ is \emph{completely regular}, that is, for every $\calf$-closed set $A\subset\Omega$ and $a\in \complement A$ there exists an $\calf$-continuous $f$ such that $f|A=0$ and $f(a)\ne 0$.
\item $\calf$ is \emph{Baire}, that is, a countable intersection of sets that are open and dense in $\Omega$, is dense in $\Omega$ (relative to $\calf$).
\end{enumerate}
\end{theorem}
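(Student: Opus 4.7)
All three assertions have direct analogues in Cartan's fine potential theory on $\RR^n$, cf.\ \cite[Ch.~11]{D}, and my strategy is to adapt those classical proofs by substituting plurisubharmonic functions and pluripolar sets for subharmonic functions and polar sets.

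For (1), I would invoke Choquet's topological lemma. By Proposition \ref{prop2:1} the given union of $\calf$-open sets refines to a union $\bigcup_\iota U(a_\iota, B_\iota, f_\iota)$ of basic sets. After localizing---using that $\Omega$ is Euclidean-Lindel\"of to pass to a countable cover by coordinate balls and then working inside a single fixed ball $B$---I may regard the indexed PSH functions as living on $B$, so that the restricted union is $\{z \in B : \sup_\iota f_\iota(z) > 0\}$. Choquet's topological lemma yields a countable subfamily $\{f_{\iota_n}\}$ whose upper envelope has the same upper semicontinuous regularization as $\sup_\iota f_\iota$, and by Bedford--Taylor the set on which a locally bounded PSH upper envelope disagrees with its u.s.c.\ regularization is pluripolar. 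Hence the union agrees with the countable subunion modulo a pluripolar set.

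For (2), I would use that $\calf$ is, by definition, the initial topology with respect to the family $\PSH(\Omega)$, whose members take values in the completely regular space $[-\infty,\infty)$; any such initial topology is itself completely regular. Concretely, given $\calf$-closed $A$ and $a \notin A$, Proposition \ref{prop2:1} supplies $U(a,B,f) \subset \Omega \setminus A$, so $A \cap B \subset \{f \le 0\}$. Let $\chi: \Omega \to [0,1]$ be a Euclidean-continuous bump with compact support in $B$ and $\chi(a) = 1$, and let $\phi: [-\infty, \infty) \to [0,1]$ be continuous with $\phi|_{(-\infty, 0]} = 0$ and $\phi(f(a)) = 1$. The function $g(z) = \chi(z)\,\phi(f(z))$ on $B$, extended by $0$ outside $B$, is then $\calf$-continuous (as a product of $\calf$-continuous factors, identically zero in a Euclidean neighborhood of $\Omega \setminus B$), satisfies $g(a) = 1$, and vanishes on $A$.

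For (3), the Baire property is the most delicate step. Given $\calf$-open $\calf$-dense $V_n$ and any non-empty basic open $W_0 = U(a_0, B_0, f_0)$, I would inductively select basic open sets $W_n = U(a_n, B_n, f_n) \subset W_{n-1} \cap V_n$ satisfying (i) $\overline{B_n} \subset B_{n-1}$ with Euclidean radii tending to zero and (ii) $W_n \subset \{f_{n-1} > \varepsilon_n\}$ for some $\varepsilon_n \in (0, f_{n-1}(a_n))$ chosen at step $n$. Condition (ii) is achievable because $\{f_{n-1} > \varepsilon_n\} \cap B_{n-1}$ is itself a basic $\calf$-neighborhood of $a_n$, so Proposition \ref{prop2:1} lets me refine $V_n \cap \{f_{n-1} > \varepsilon_n\} \cap B_{n-1}$ to a basic $W_n$ inside it. The nested closed balls then share a unique Euclidean limit point $z_\infty \in B_0$, and for every $n$ the later centers $a_k$, $k > n$, all lie in $\{f_{n-1} > \varepsilon_n\}$; upper semicontinuity then gives $f_{n-1}(z_\infty) \ge \limsup_k f_{n-1}(a_k) \ge \varepsilon_n > 0$, so $z_\infty \in W_{n-1}$ for every $n$, hence $z_\infty \in W_0 \cap \bigcap_n V_n$. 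I expect the quantitative clause (ii)---without which upper semicontinuity only gives $f_{n-1}(z_\infty) \ge 0$---to be the main obstacle in the whole argument.
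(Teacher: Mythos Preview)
The paper does not supply a proof of this theorem; it simply refers to \cite[Chapter~11]{D} and leaves the adaptation from the classical fine topology to the plurifine setting implicit. Your outline therefore does more than the paper itself, and parts (2) and (3) are correct as written. In particular, your handling of (3)---carrying along a strict lower bound $\varepsilon_n>0$ for $f_{n-1}$ on $W_n$ so that Euclidean upper semicontinuity of the plurisubharmonic function $f_{n-1}$ forces $f_{n-1}(z_\infty)\ge\limsup_k f_{n-1}(a_k)\ge\varepsilon_n>0$---is exactly the device needed, and you correctly identify it as the crux of the argument.

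For (1) there is one genuine technical wrinkle that you gloss over: the basic sets $U(a_\iota,B_\iota,f_\iota)$ come with plurisubharmonic functions defined on \emph{different} balls $B_\iota$, and one cannot simply ``regard the indexed PSH functions as living on $B$'' without work; in particular, the intersection $U(a_\iota,B_\iota,f_\iota)\cap B$ need not be a single superlevel set of any $g\in\PSH(B)$. A clean fix is to take a countable Euclidean base $\{B^{(j)}\}$ for $\Omega$ with $\overline{B^{(j)}}\Subset\Omega$; for fixed $j$ retain only those indices $\iota$ with $\overline{B^{(j)}}\subset B_\iota$, so that each $f_\iota|_{B^{(j)}}\in\PSH(B^{(j)})$ and is bounded above there, normalize to a uniformly bounded family, and then run Choquet's lemma together with the Bedford--Taylor theorem that negligible sets are pluripolar on $B^{(j)}$. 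Summing over the countably many $j$ and pulling back to the original $\calf$-open cover completes the argument. The gap is not fatal, but it is the one place where your sketch needs expanding.
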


It is a well-known fact that the fine topology on $\RR^n$ is connected, cf.~\cite{F1}, but it was for quite some time an open question whether $\calf$ is locally connected, cf.~\cite{F6}. The first proof of this result was given in \cite{EW1}. In \cite{EW2} another proof gives as a by-product some extra information about the relation between plurifine topology in an open set $\Omega$ and the fine topology on complex lines in $\Omega$. It is based on the solutions by Nevanlinna, \cite{N} and Beurling \cite{B} to the Carleman-Milloux problem.

Recall the \emph{extremal function} (or \emph{harmonic measure}) of a set $E\subset\overline{D}$, where $D$ is open in $\CC$,
\[\omega(z,E,D)=\sup\{f(z):\ f\in\SH_-(D)\ \text{and}\ \limsup_{D\ni w\to E} f(w)\le -1\},
\]
where $\SH_-(D)$ denotes the the space of negative subharmonic functions on $D$.
 Note that $\omega$ need not be subharmonic (as function of $z$), but its upper semicontinuous regularization,
 \[\omega^*(z,E,D)=\limsup_{D\ni w \to z} \omega(z,E, D),
 \]
will be subharmonic. Notice that $\omega\le 0$; classical harmonic measure equals $-\omega$.

\begin{theorem}[\cite{N,B}] \label{thm2.2}Let $F$ be compact in the unit disc $\DD$, and let $\tilde F=\{r: \exists \theta: re^{i\theta}\in F\}$ be its circular projection.
Then for $z\in \DD\setminus F$ we have
\[\omega(z,F,\DD)\le\omega(-|z|,\tilde F,\DD).
\]
\end{theorem}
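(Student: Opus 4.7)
The plan is to establish the inequality through circular (Pólya) symmetrization of subharmonic competitors in the definition of $\omega(z,F,\DD)$. Fix an arbitrary competitor $f\in\SH_-(\DD)$ with $\limsup_{\DD\ni w\to F}f(w)\le-1$; the goal is to produce a circularly symmetric function $f^\star$ which is itself a competitor for $\omega(\cdot,\tilde F,\DD)$ and which satisfies $f^\star(-|z|)\ge f(z)$.

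First I would define $f^\star$ on each circle $\{|w|=r\}\subset\DD$ by the standard circular rearrangement: for each $t$, the super-level set $\{\theta:f^\star(re^{i\theta})>t\}$ is the open arc centered at $\theta=\pi$ of the same Lebesgue arclength as $\{\theta:f(re^{i\theta})>t\}$. Thus $\theta\mapsto f^\star(re^{i\theta})$ is symmetric about $\theta=\pi$ and nondecreasing on $[0,\pi]$. Two properties follow at once: $f^\star\le 0$ since $f\le 0$, and because the infimum of $f^\star$ on each circle is attained on the positive real axis while the supremum is attained at the antipode, one has $f^\star(-r)=\sup_\theta f(re^{i\theta})\ge f(z)$ for every $z$ with $|z|=r$, which is the antipodal comparison needed at the end.

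Next I would verify the boundary condition at $\tilde F$. If $r\in\tilde F$ then some $re^{i\theta_0}$ lies in $F$, so for any $\eps>0$ one has $f<-1+\eps$ in a neighborhood of $re^{i\theta_0}$; on every nearby circle this forces $\inf_\theta f(r'e^{i\theta})<-1+\eps$, and since $f^\star$ attains this infimum at the point $r'$ on the positive real axis, we obtain $f^\star(r')<-1+\eps$. A routine upper-semicontinuity argument then gives $\limsup_{\DD\ni w\to r}f^\star(w)\le-1$ for every $r\in\tilde F$.

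The main obstacle, and the real content of the theorem, is showing that $f^\star$ is subharmonic on $\DD$. This is the classical Pólya–Szegő principle for circular symmetrization of subharmonic functions in the plane, and it is here that the two-dimensional nature of the problem enters genuinely. To handle it I would first reduce to a continuous $f$ by replacing it with the decreasing family $\max(f,-M)\ast\phi_\eps$ on a slightly smaller disk. For smooth $f$ the submean inequality for $f^\star$ at a point $a$ can be established by comparing averages over small disks centered at $a$: via a layer-cake decomposition it reduces to arclength estimates of super-level sets on circles, which together with an iterative ``two-circle'' polarization that slides level sets toward the negative real axis preserves subharmonicity and does not decrease the relevant circle averages. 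Passing to the limit $\eps\to 0$, $M\to\infty$ and taking the upper-semicontinuous regularization yields $f^\star\in\SH_-(\DD)$. Once this is in hand, the definition of $\omega(\cdot,\tilde F,\DD)$ as a supremum gives
\[
f(z)\le f^\star(-|z|)\le\omega(-|z|,\tilde F,\DD),
\]
and taking the supremum over all admissible $f$ yields the stated inequality.
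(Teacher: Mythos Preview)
The paper does not supply its own proof of this theorem; it merely attributes the result to Nevanlinna and Beurling and refers the reader to Ransford's textbook for a proof. So there is no in-paper argument to compare your proposal against.

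On the merits, your proposal has a genuine gap at precisely the point you yourself flag as ``the real content of the theorem'': it is \emph{not} true that circular (P\'olya) symmetrization preserves subharmonicity. The decreasing circular rearrangement $f^\star$ of a subharmonic $f$ is in general not subharmonic, and the polarization sketch you offer does not repair this---polarization takes a minimum on one side of the reflecting line, and minima of subharmonic functions are typically not subharmonic, so subharmonicity is already lost at the first step and cannot be recovered in the limit. This obstruction is exactly why Baernstein introduced his $*$-function
\[
u^*(re^{i\theta})=\sup_{|E|=2\theta}\int_E u(re^{i\phi})\,d\phi,
\]
which replaces pointwise rearrangement by rearrangement of integrals over arcs and \emph{is} subharmonic whenever $u$ is; the standard proofs (including the one in Ransford to which the paper points) proceed via that device, or via an equivalent maximum-principle comparison for the harmonic measures or Green functions of the original and symmetrized domains, not by rearranging individual competitors in the defining supremum. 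There is also a secondary loose end in your boundary-condition step: you only show $f^\star(r')<-1+\eps$ for $r'$ on the positive real axis near $\tilde F$, whereas the $\limsup$ condition toward $\tilde F$ requires control at all nearby points $r'e^{i\phi}$ with $\phi\ne 0$, and since $f^\star$ is nondecreasing away from the positive real axis those values can exceed $f^\star(r')$.
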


A proof may also be found in \cite{Ransf}. One needs two lemmas.

\begin{lemma}[\cite{EW2}]\label{lemma2.4}$C^1$-arcs are connected in the fine topology on $\CC$.
 \end{lemma}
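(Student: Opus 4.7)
The plan is to combine Theorem~\ref{thm2.2} (Nevanlinna--Beurling) with a sup-argument on $[0,1]$ to rule out any fine disconnection of $\gamma([0,1])$. The first step is a non-thinness lemma for $C^1$-arcs: for any $C^1$ map $\sigma:[0,\delta]\to\CC$ with $\sigma'(0)\ne0$, the set $F=\sigma([0,\delta])$ is non-thin at $\sigma(0)$. After an affine change of coordinates one may assume $\sigma(0)=0$ and $\sigma'(0)>0$ and, after shrinking $\delta$, that $\sigma$ is injective on $[0,\delta]$. Then $t\mapsto|\sigma(t)|$ is continuous on $[0,\delta]$, vanishes only at $0$, and its image---the circular projection $\tilde F$ of $F$---is an interval $[0,r_0]\subset\RR\subset\CC$. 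Since $0$ is a Dirichlet-regular boundary point of $\CC\setminus[0,r_0]$, one has $\omega(-|z|,\tilde F,\DD)\to -1$ as $z\to0$. Theorem~\ref{thm2.2} then gives $\omega(z,F,\DD)\le\omega(-|z|,\tilde F,\DD)\to -1$ for $z\in\DD\setminus F$ tending to $0$, which is precisely non-thinness of $F$ at $0$.

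Now suppose for contradiction that $\gamma([0,1])=A\sqcup B$ is a partition into non-empty, relatively finely open subsets. Writing $A=\tilde A\cap\gamma$ and $B=\tilde B\cap\gamma$ with $\tilde A,\tilde B$ finely open in $\CC$, the disjointness of $A$ and $B$ forces $A\subset\CC\setminus\tilde B$ and $B\subset\CC\setminus\tilde A$, so $A$ is thin at every point of $B$ and vice versa. Combined with the lemma, this yields the dichotomy: \emph{no $C^1$-sub-arc of $\gamma$ sitting inside $A$ may end at a point of $B$}, and vice versa. After relabelling arrange $\gamma(0)\in A$ and choose $t_1$ with $\gamma(t_1)\in B$; set $t^*=\sup\{t\in[0,t_1]:\gamma(t)\in A\}$. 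If $\gamma(t^*)\in A$ then $\gamma((t^*,t_1])\subset B$ is a sub-arc in $B$ ending at $\gamma(t^*)\in A$, contradicting the dichotomy. If $\gamma(t^*)\in B$ one picks $t_n\nearrow t^*$ with $\gamma(t_n)\in A$ and considers $\rho=\sup\{t\in[t_n,t^*]:\gamma([t_n,t])\subset A\}$; when $\rho$ is not attained, $\gamma(\rho)\in B$ and $\gamma([t_n,\rho))\subset A$ is the forbidden sub-arc, while when $\rho$ is attained the analogous $\inf$-argument on $\{t>\rho:\gamma(t)\in B\}$ either produces the sub-arc outright or forces $\gamma([t_n,\tau])\subset A$ for some $\tau>\rho$, contradicting the maximality of $\rho$.

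The main obstacle is that thin sets can be Euclidean-dense, so one cannot conclude non-thinness merely from a Euclidean-convergent sequence of $A$-points approaching a $B$-point. The nested $\sup/\inf$ construction is precisely what extracts a genuine sub-arc of positive length contained in one colour and terminating at a point of the other, which is the object to which the Nevanlinna--Beurling based non-thinness lemma can be applied.
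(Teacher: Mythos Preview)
Your non-thinness lemma is correct and the use of Theorem~\ref{thm2.2} is neat: the circular projection of a short $C^1$-arc with $\sigma'(0)\ne0$ is an interval, hence regular at the origin, and Beurling--Nevanlinna transfers this back to the arc. From this you correctly extract the dichotomy ``no $C^1$-sub-arc contained in $A$ can terminate at a point of $B$'', and vice versa.

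The gap is in the $\sup/\inf$ argument. In the case ``$\rho$ is attained'' you have $\gamma([t_n,\rho])\subset A$ and $\gamma(\rho)\in A$, and by maximality of $\rho$ every right-neighbourhood $(\rho,\rho+\eps)$ meets $F_2=\gamma^{-1}(B)$. Hence $\inf\{t>\rho:\gamma(t)\in B\}=\rho$, and your stated alternative ``either produces the sub-arc outright or forces $\gamma([t_n,\tau])\subset A$ for some $\tau>\rho$'' simply does not cover this situation. If you now pick $s_k\searrow\rho$ with $\gamma(s_k)\in B$ and run the mirror construction $\sigma=\inf\{t\in[\rho,s_k]:\gamma([t,s_k])\subset B\}$, the same trichotomy recurs: when $\sigma$ is attained and $\sigma>\rho$, $A$-points accumulate at $\sigma$ from the left and you are back to the starting configuration with the colours exchanged. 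Nothing prevents an infinite regress. Put differently, your dichotomy only yields that each $F_j=\gamma^{-1}(\,\cdot\,)$ has the property ``no one-sided interval of the other colour abuts any of its points''; abstractly this is compatible with both $F_1$ and $F_2$ being dense and containing no interval (think of a rationals/irrationals pattern), so a purely order-theoretic $\sup/\inf$ search for a monochromatic sub-arc cannot succeed without further input.

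The paper's proof supplies exactly that missing quantitative input. From thinness of $E_{3-j}$ at $J(x)$ it uses Wiener's criterion (and the $C^1$ change of variables) to get $\Capa\bigl(F_{3-j}\cap[x-t,x+t]\bigr)=o(t)$, then the elementary inequality $l(F)\le 4\Capa(F)$ for linear sets to upgrade this to Lebesgue density $0$. Thus $f(s)=\int_0^s\mathbf 1_{F_1}$ is everywhere differentiable with $f'=\mathbf 1_{F_1}$, and Darboux's intermediate-value theorem for derivatives forces $f'\equiv0$ or $f'\equiv1$, i.e.\ one of $F_1,F_2$ is empty. The point is that a density statement, not the mere existence of nearby same-colour points, is what rules out the intertwined configurations your argument cannot exclude.
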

 \begin{proof}
We can take $I=[0,1]\subset\RR$.  Let $J:[0,1]\to\CC$ be a $C^1$-arc. Suppose that $J=E_1\cup E_2$, where $E_j=U_j\cap J$ for some f-open set $U_j\subset\CC$, ($j=1,2$), and $E_1\cap E_2= \emptyset$. Let $F_j=J^{-1}(E_j)$, and let $x\in F_j$. Then $F_{1-j}$ is thin at $x$. (cf.~\cite{Ransf}). From Wiener's criterion one easily gets
\begin{equation}\label{eq2.4a}
\lim_{t\to 0}\frac{\Capa(F_{1-j})\cap [x-t,x+t]}{t}=0.\end{equation}
Let $l$ denote Lebesgue measure on $[0,1]$. Because $l(F)<4\Capa(F)$, \eqref{eq2.4a} remains valid with $\Capa$ replaced by $l$. It follows that the function
\[f(s):=\int_0^s{\bf 1}_{F_1}(t)\, dt\]
is differentiable on $[0,1]$ and $f'|F_1\equiv 1$, $f'|F_2=0$. The intermediate value theorem for differentiable functions implies that either $F_1$ or $F_2$ must be empty.
\end{proof}

Fuglede proved the following.
\begin{lemma}[\cite{Fu75}]\label{lemma2.4a} If two point $p,q\in\CC$ in an $f$-open set  $U$ are joined by a (Euclidean) continuum $K\subset U$, then there exists a polygonal path in $U$ joining $p$ and $q$.
\end{lemma}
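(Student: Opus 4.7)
My plan is to reduce the problem to a local Euclidean statement about $U$ and then use the ordinary Euclidean connectedness of the continuum $K$. For each $y\in U$ let $P(y)$ denote the set of points of $U$ that can be joined to $y$ by a polygonal path lying in $U$. I aim to show that each $P(y)$ is both open and closed in $U$ with respect to the Euclidean subspace topology. Once this is established, $P(p)\cap K$ is a relatively clopen subset of the Euclidean-connected continuum $K$ and contains $p$, so it coincides with $K$; in particular $q\in P(p)$, which is exactly the required conclusion.

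Both openness and closedness of $P(y)$ reduce to the following local claim: for every $x\in U$ there is a Euclidean ball $B(x,\rho_x)$ such that every $y\in B(x,\rho_x)\cap U$ can be joined to $x$ by a polygonal path with at most two segments lying in $U$. Granted this, openness of $P(y)$ in $U$ follows by concatenation, and openness of the complement $U\setminus P(y)$ follows because, for $z\in U\setminus P(y)$, any $w\in B(z,\rho_z)\cap U\cap P(y)$ would force $z\in P(w)=P(y)$, a contradiction.

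The local claim is the heart of the argument. Since $U$ is $f$-open, $\comp U$ is thin at $x$. I would invoke the classical planar potential-theoretic fact that when $E\subset\CC$ is thin at $x$, the intersection $E\cap\{x+te^{i\theta}:t>0\}$ fails to accumulate at $x$ for quasi every direction $\theta\in S^1$. By monotonicity, there is a small $\rho>0$ and a set $\Theta_x\subset S^1$ of positive (indeed nearly full) one-dimensional Lebesgue measure such that the open segment $(x,x+\rho e^{i\theta})$ lies entirely in $U$ for each $\theta\in\Theta_x$. For a nearby point $y\in U\cap B(x,\rho/4)$ the same argument at $y$ produces an analogous set $\Theta_y$ of admissible directions with comparable radius; the two corresponding cones of admissible segments from $x$ and $y$ then each have positive Lebesgue area inside $B(x,\rho)$, and a Fubini/measure argument yields a common point $w$ with $[x,w]\cup[w,y]\subset U$.

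The hardest part is the local claim, and within it the issue of uniformity: the good-direction radius at $y$ depends a priori on $y$, and showing that it is bounded below uniformly for $y\in B(x,\rho_x)\cap U$ requires a quantitative form of radial avoidance for thin sets, combined with the fact (easy consequence of thinness in $\RR^2$) that $\comp U$ has planar Lebesgue density zero at $x$. Once the uniform local claim is in hand, the global construction — cover $K$ by finitely many balls supplied by the local claim and chain $p$ to $q$ through overlaps of consecutive balls at points of $K\subset U$, using the Euclidean connectedness of $K$ — is routine.
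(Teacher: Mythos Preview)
The paper does not supply its own proof of this lemma; it is stated with a citation to Fuglede~\cite{Fu75} and then used as a black box in the proof sketch of Theorem~\ref{thm2.3}. So there is no argument in the paper to compare yours against, and I evaluate your sketch on its own terms. Your global architecture --- showing that the polygonal-path component $P(p)$ is relatively clopen for the Euclidean subspace topology on $U$, hence on the connected continuum $K$ --- is sound, and the reduction to a local statement is the right move. The radial-avoidance ingredient you invoke (that a planar thin set fails to accumulate at $x$ along almost every ray through $x$) is a genuine fact; granting it, you do obtain for each fixed $x\in U$ a radius $\rho$ and a direction set $\Theta_x\subset S^1$ of nearly full measure with $[x,x+\rho e^{i\theta}]\subset U$ for all $\theta\in\Theta_x$.

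The gap is precisely where you flag it: the passage to a nearby $y\in B(x,\rho/4)\cap U$ with \emph{comparable} radius. Thinness of $\comp U$ at $y$ produces a direction set $\Theta_y$ of nearly full measure only relative to some $\rho_y>0$ that depends on $y$ and carries no a~priori lower bound in terms of $\rho$. Your proposed remedy --- planar Lebesgue density~$0$ of $\comp U$ at $x$ together with a quantitative radial-avoidance estimate --- does not close this: an \emph{area} bound on $\comp U\cap B(x,\rho)$ gives no control on the measure of its radial projection from the point $y$, since a set of arbitrarily small area (for instance a single small circular arc about $y$) can block a large set of directions from $y$ at scale $\rho$. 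Consequently the fan from $y$ need not occupy more than half of any ball on the scale $\rho$, and your Fubini/overlap step cannot be executed. Without the uniform local claim, neither the clopen argument of your first paragraph nor the finite-cover chaining of your last paragraph goes through. As it stands the sketch stops short of a proof; handling this uniformity is the substantive content one has to supply.
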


All this gives local connectedness of the fine and the plurifine  topology.

\begin{theorem}[\cite{EW2}]\label{thm2.3}
Let $d<c<0$ and $0<r<1$.
\begin{enumerate}
\item[a.] There exists $k>0$ such that for every $\varphi\in\SH_-(\DD)$ with $\varphi(0)>c$ and for every point $a$ in the ($f$-open) set
\[V=\{\varphi>c\}\cap \{|z|<r\}
\]
there exists a circle $C(a,\delta_{\varphi,a})$ with $\delta_{\varphi,a}>k$, which is contained in $W=\{\varphi> d\}$.
\item[b.] Moreover the set $\tilde W=W\cap \overline{B(a,\delta_{\varphi,a})}$ is polygonally connected, and therefore $f$-connected.
\end{enumerate}
\end{theorem}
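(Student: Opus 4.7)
My plan is to prove (a) by contradiction using Theorem~\ref{thm2.2}, and to deduce (b) from the maximum principle combined with the Euclidean closedness of $\{\varphi \le d\}$. The main obstacle in (a) is the quantitative behaviour of the extremal function $\omega(0,[t,s_0],\DD)$ as $t \to 0$, while in (b) it is that $\tilde W$ is not Euclidean open; I handle the latter by exhibiting an annular neighbourhood of $C$ inside $\tilde W$.

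For (a), I would fix $R_0 = (1-r)/2$, so that $\overline{B(a,R_0)} \subset \DD$ for every $a$ with $|a| < r$. Suppose, toward contradiction, that for some $k > 0$ still to be chosen every circle $C(a,s)$ with $s \in (k,R_0)$ meets $\{\varphi \le d\}$. Then $K := \{\varphi \le d\} \cap \{k \le |z-a| \le R_0\}$ has circular projection from $a$ containing $[k,R_0]$. Rescaling via $\psi(w) := \varphi(a + R_0 w)/|d| \in \SH_-(\DD)$ yields $\psi \le -1$ on $\tilde K := (K-a)/R_0$, with circular projection from $0$ containing $[k/R_0, s_0]$ for some $s_0 < 1$. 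Since $\psi$ is admissible for $\omega(\cdot,\tilde K,\DD)$, evaluation at $w = 0$ together with Theorem~\ref{thm2.2} and the set-monotonicity of $\omega$ gives
\[
\frac{\varphi(a)}{|d|} = \psi(0) \le \omega(0,\tilde K,\DD) \le \omega(0,\widehat{\tilde K},\DD) \le \omega(0,[k/R_0,s_0],\DD).
\]
Because $d < c < 0$, one has $c/|d| > -1$, while the right-hand side tends to $-1$ as $k \to 0$, since the set closes in on the evaluation point. Choosing $k$ sufficiently small, depending only on $c$, $d$, and $r$, makes the right-hand side strictly less than $c/|d|$, contradicting $\varphi(a) > c$.

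For (b), let $K := \{\varphi \le d\} \cap \overline{B(a,\delta)}$, where $\delta = \delta_{\varphi,a}$. Upper semicontinuity of $\varphi$ makes $K$ Euclidean closed; since $C \subset W$, $K$ is disjoint from $C$ and hence compactly contained in the open ball $B(a,\delta)$. Pick $\eta > 0$ with $K \subset \overline{B(a,\delta-\eta)}$, so that the closed annulus $\Lambda := \{\delta - \eta \le |z-a| \le \delta\}$ lies in $\tilde W$ and is polygonally connected.

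Finally, I would show that every connected component of the open set $B(a,\delta) \setminus K$ meets $\Lambda$. Let $A'$ be such a component. If $\overline{A'} \cap C = \emptyset$, then $\overline{A'} \subset B(a,\delta)$ with $\partial A' \subset K$, and the maximum principle gives $\sup_{A'} \varphi \le \sup_K \varphi \le d$, contradicting $A' \subset W$. Hence $\overline{A'}$ meets $C$, which forces $A'$ itself to intersect the open annulus $\{\delta - \eta < |z-a| < \delta\} \subset \Lambda$. Since open connected subsets of $\RR^2$ are polygonally connected, every point of $A'$ is joined by a polygonal path in $A'$ to a point of $\Lambda$, and thence to any other point of $\tilde W$ via a polygonal path in $\Lambda$. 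The $f$-connectedness follows from Lemma~\ref{lemma2.4}, since a polygonal path is a finite chain of line segments sharing endpoints, each of which is $f$-connected.
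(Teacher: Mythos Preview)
Your argument is correct. For part~(a) you follow exactly the route the paper sketches: the Beurling--Nevanlinna projection estimate (Theorem~\ref{thm2.2}) applied after rescaling, with the uniform choice of $k$ coming from the fact that $\omega(0,[t,s_0],\DD)\to-1$ as $t\to 0$. One cosmetic point: to keep $\tilde K$ compact \emph{in} $\DD$ you should truncate at a fixed $s_0<1$ (equivalently, replace $R_0$ by $s_0R_0$) before invoking Theorem~\ref{thm2.2}; you gesture at this with ``for some $s_0<1$'' but it is worth saying explicitly that $s_0$ is fixed once and for all, independently of $\varphi$ and $a$.

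For part~(b) your route differs from the paper's sketch. The paper argues that $a$ and the circle $C(a,\delta)$ lie in the same component of $W$ by the maximum principle, and then invokes Fuglede's Lemma~\ref{lemma2.4a} (existence of polygonal paths in an $f$-open set once a Euclidean continuum is available) together with Lemma~\ref{lemma2.4}. You instead exploit that $K=\{\varphi\le d\}\cap\overline{B(a,\delta)}$ is Euclidean compact and disjoint from $C$, carve out a closed annulus $\Lambda\subset\tilde W$, and show via the ordinary maximum principle that every Euclidean component of the open set $B(a,\delta)\setminus K$ meets $\Lambda$; polygonal connectedness then follows from the elementary fact that open connected plane sets are polygonally connected. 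This is a cleaner and more self-contained argument: it bypasses Lemma~\ref{lemma2.4a} entirely and works purely in the Euclidean topology, using the $f$-topology only at the very end through Lemma~\ref{lemma2.4}. (A tiny fix: take $K\subset B(a,\delta-\eta)$ with the \emph{open} ball, so that the closed annulus $\Lambda$ is genuinely disjoint from $K$.) The paper's approach, by contrast, packages the geometric work into Fuglede's lemma, which is more in the spirit of fine potential theory but less explicit about the polygonal paths.
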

\begin{proof}[Sketch of proof]
 a.) This follows from Theorem \ref{thm2.2}. For b.) one has to observe that $a$ and $C(a,\delta_{\phi,a})$ must belong to the same component of $W$, because of the maximum principle. Then Lemma \ref{lemma2.4a} and Lemma \ref{lemma2.4}  apply.

\end{proof}

For the plurifine topology this yields
 \begin{theorem}\label{thm2.5}  The plurifine topology $\calf$ is weakly locally connected, hence locally connected.

\smallskip
 More precisely, suppose that $U=U(a, B(a,R_0) ,f)$ is a basic neighborhood in $\calf$. Let $R<R_0$,  $c<f(a)$, and  $V=\{f\ge f(a)\}\cap\{\|z-a\|<R\}$. Then there exists a constant $r>0$ such that for every complex line $L$ passing through $v\in V$ the set $\{f>c\}\cap L$ contains a circle $C(v,r_{v,L})$ with $r_{v,L}\ge r$, and the set $\{f>c\}\cap L\cap \overline{B(v,r_{v,L})}$ is polygonally connected.

 Now let $X_{v,L}$ denote the $\calf$-component of $v$ in $L\cap U$. Then $\cup_{v\in L}X_{v,L}$ is an $\calf$-connected set and contains the $\calf$-neighborhood $\{f>c\}\cap B(v,r)$of $v$.

\end{theorem}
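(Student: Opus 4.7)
The plan is to reduce everything to an application of Theorem \ref{thm2.3} on each complex line $L$ through a fixed point $v\in V$, with constants chosen so as to be uniform in both $v$ and $L$. Local connectedness of $\calf$ will then follow from weak local connectedness together with the complete regularity of $\calf$ recorded in the preceding theorem, via the standard topological implication.

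First I would fix auxiliary radii $R < R_0' < R_0$, noting that $f$ is bounded above by some constant $M$ on $\overline{B(a,R_0')}$ and that $L \cap B(v,R_0'-R) \subset B(a,R_0')$ for every complex line $L$ through every $v \in V$. On such an $L$, the restriction $f|_L$ is subharmonic on a disk of radius $R_0'-R$ about $v$, bounded above by $M$, and satisfies $f|_L(v)\ge f(a) > c$ since $v \in V$. After rescaling $L$ to the unit disk centered at $v$ and normalizing $f|_L$ to a negative subharmonic function, I obtain a family of functions satisfying the hypotheses of Theorem \ref{thm2.3} with constants $d < c' < 0$ and $r' < 1$ determined solely by $c$, $f(a)$, $M$, $R$, $R_0'$. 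Theorem \ref{thm2.3}(a) then provides a circle $C(v,r_{v,L}) \subset \{f > c\} \cap L$ with $r_{v,L} \ge r$ for some uniform $r > 0$, and Theorem \ref{thm2.3}(b) shows that $\{f > c\} \cap L \cap \overline{B(v,r_{v,L})}$ is polygonally connected; by Lemma \ref{lemma2.4} it is then $\calf$-connected on $L$. Shrinking $r$ if necessary so that $\overline{B(v,r)} \subset B(a,R_0)$, and assuming $c \ge 0$ so that $\{f > c\} \subset U$, this polygonal piece sits inside $L \cap U$ and hence inside the $\calf$-component $X_{v,L}$ of $v$.

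Finally, the union $Y = \bigcup_{L \ni v} X_{v,L}$ over all complex lines $L$ through $v$ is $\calf$-connected, because every $X_{v,L}$ contains $v$. I claim $Y$ contains the $\calf$-open neighborhood $\{f > c\} \cap B(v,r)$ of $v$: any $w \neq v$ in this set lies on the unique complex line $L_w$ through $v$ and $w$, and since $\|w - v\| < r \le r_{v,L_w}$, the point $w$ belongs to the polygonally connected piece on $L_w$ already shown to lie in $X_{v,L_w} \subset Y$. This yields weak local connectedness, and together with complete regularity, local connectedness. The main obstacle will be securing the uniformity of the radius $r$: the bound $k$ of Theorem \ref{thm2.3} is stated for a single function $\varphi \in \SH_-(\DD)$, and one must verify that the normalization turning $f|_L$ into such a $\varphi$ can be carried out with constants depending only on the ambient data $(a,R_0,R,c,f(a),M)$ and not on the particular choice of $v$ or $L$.
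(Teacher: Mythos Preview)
Your approach is correct and is precisely what the paper intends: the paper presents Theorem~\ref{thm2.5} as a direct consequence of Theorem~\ref{thm2.3} (``For the plurifine topology this yields\ldots''), and your argument spells out exactly the uniform-in-$(v,L)$ application of Theorem~\ref{thm2.3} followed by the union over all complex lines through $v$. One minor correction: the implication ``weakly locally connected at every point $\Rightarrow$ locally connected'' is a general topological fact and does not require complete regularity.
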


\section{$\calf$-pluri\-sub\-har\-monic functions}\label{sec2}
Let $\SH(D)$ denote the subharmonic functions on a domain $D\subset\RR^n$, $\SH_-(D)$ the negative subharmonic functions on $D$, $\PSH(\Omega)$ the plurisubharmonic functions on a domain $\Omega\subset\CC^n$, and $\PSH_-(\Omega)$ the negative plurisubharmonic functions on $\Omega$.
We start by defining finely subharmonic functions.

\subsection{Finely subharmonic functions}\label{ss2.1}
Let $D$ be a bounded domain in $\RR^n$ and let $E\subset D$. For $u\in\SH_-(D)$ the reduced function $R^E_u$ is defined by
\[R^E_u(z)=\sup\{v(z): v\in\SH_-(D), \ v\le u \text{\ on\ } E\}.
\]
The upper semicontinuous regularization of $R^E_u$ is denoted by $\check{R}^E_u$. It is a subharmonic function on $D$. Next let $z\in D$ and $\delta_x$ the point mass at $x$. The \emph{sweep} or \emph{swept out}  of $\delta_z$ with respect to $E$ is the unique Radon measure $\delta^E_z$ defined by
\[\check R^E_u(z)=\int u d\delta^E_z.
\]
The key observation is that $u\mapsto \check R^E_u(z)$ extends as a continuous linear functional on $C(E)$, and the Riesz Representation Theorem applies. In case $F$ is a subset of the boundary $E$ of a domain $ D$ and $z\in D$, one sees that $\delta^E_z(F)=-\omega(z,F,D)$ the harmonic measure of $E$ relative to $z\in D$.

\begin{definition}\label{def21.1}
A function $f$ defined on a fine open set $U\subset\RR^n$ is called \emph{finely subharmonic} if
\begin{enumerate}
\item $f$ is finely upper semicontinuous.
\item
\[f(z)\le \int_{\partial_f V} f d\delta^{\comp V}_z.
\] for $V$ in some local base of the fine topology at $z$.
\item $f\not\equiv -\infty$ on every fine component of $U$.
\end{enumerate}
\end{definition}

Clearly, the restriction of a subharmonic function to an $f$-domain is finely subharmonic.
A bounded finely subharmonic function on a Euclidean domain is subharmonic. In $\RR^2$ the boundedness may be dropped. In $\RR^n$, $n\ge 3$, there are examples of $f$-subharmonic functions on Euclidean domains that are not subharmonic, cf.~\cite{F1,F2}.

The finely subharmonic functions on a fine domain $U$ will be denoted by $\fSH(U)$.

Finely subharmonic functions share many properties with ordinary subharmonic functions, e.g. we have the following result, cf.~\cite[Lemma 10.1]{F1}.
\begin{prop}
Let $V\subset U$ be fine open sets, $u\in \fSH(U)$, and $v\in \fSH(V)$.
Suppose that $\flimsup_{V\ni z\to w} v(z)\le u(w)$ for all $w\in\partial_f V$, then the function
\begin{equation*}
\Psi(z)=\begin{cases} u(z)&\text{if $z\in U\setminus V$}\\
										\max(u(z),v(z)) & \text{ if $z\in V$}
				\end{cases}
\end{equation*}
belongs to $\fSH(U)$.
\end{prop}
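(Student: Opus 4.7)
The plan is to verify the three conditions of Definition \ref{def21.1} for $\Psi$. A preliminary observation that drives the argument is $\Psi\ge u$ throughout $U$: on $U\setminus V$ by definition, and on $V$ because $\max(u,v)\ge u$.

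For fine upper semicontinuity, at any $z\in V$ the set $V$ being $f$-open gives a fine neighborhood of $z$ contained in $V$ on which $\Psi=\max(u,v)$, and this is $f$-usc as the maximum of two $f$-usc functions. At $z\in U\setminus V$ one has $\Psi(z)=u(z)$, and I split the $f$-limsup as $z'\to z$ into approach through $U\setminus V$ (where $\flimsup u(z')\le u(z)$ by $f$-usc of $u$) and approach through $V$ (where $\flimsup\max(u(z'),v(z'))\le u(z)$ follows from the boundary hypothesis on $v$ combined with $f$-usc of $u$).

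For the submean inequality I distinguish two situations. If $\Psi(z)=u(z)$, which covers all $z\in U\setminus V$ as well as those $z\in V$ with $u(z)\ge v(z)$, I pick a fine neighborhood $W$ of $z$ in $U$ from the local base provided by the submean property for $u$, and use $u\le\Psi$ to write
$$\Psi(z)=u(z)\le\int_{\partial_f W}u\,d\delta_z^{\comp W}\le\int_{\partial_f W}\Psi\,d\delta_z^{\comp W}.$$
If instead $z\in V$ and $v(z)>u(z)$, so $\Psi(z)=v(z)$, I exploit the $f$-openness of $V$ (complete regularity of $\calf$, Theorem 1) to refine the local base at $z$ to consist of fine neighborhoods $W$ with $\overline{W}^f\subset V$, then intersect with the submean local base for $v$. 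For such $W$, $\partial_f W\subset V$, so $\Psi\ge v$ on $\partial_f W$, and the submean inequality for $v$ upgrades to one for $\Psi$. Finally, since $\Psi\ge u$ on $U$ and $u\not\equiv-\infty$ on every fine component of $U$, neither is $\Psi$.

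The main obstacle is the second case of the submean step. A generic fine neighborhood $W$ on which $v$ has the submean property could have $\partial_f W$ meeting $\partial_f V$, and there $\Psi=u$ while one only controls $v$ asymptotically through the boundary hypothesis. The saving move is refining the local base so that $\overline{W}^f\subset V$, which is possible precisely because $V$ is $f$-open; this pushes $\partial_f W$ into $V$ and makes the replacement of $v$ by $\Psi$ on $\partial_f W$ legitimate. The other two conditions are then routine.
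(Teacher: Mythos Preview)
The paper does not give its own proof of this proposition; it simply refers to Fuglede \cite[Lemma 10.1]{F1}. Your argument is a correct direct verification of the three conditions in Definition \ref{def21.1} and is essentially the natural route (and presumably close to Fuglede's).

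One small slip worth flagging: in the second submean case you invoke ``complete regularity of $\calf$, Theorem 1'', but the proposition lives in the Cartan fine topology on $\RR^n$, not in the plurifine topology $\calf$. What you actually need is regularity of the \emph{fine} topology: every point of a finely open $V$ admits a Euclidean-compact (hence finely closed) fine neighborhood $K\subset V$, so that $\overline{K}^{\,f}=K\subset V$. This is standard (see \cite{F1} or \cite{D}). With that correction, refining the submean local base for $v$ to members $W$ with $\overline{W}^{\,f}\subset V$ is legitimate---since a local base contains members inside any prescribed fine neighborhood---and the rest of your argument goes through.
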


\subsection{First properties of finely plurisubharmonic functions}\label{ss2}
For a compact set $K\subset\CC^n$,
let $S(K)$ denote the uniform limits on $K$ of bounded continuous plurisubharmonic functions defined in (shrinking) neighborhoods of $K$.

\begin{definition}[Plurifinely plurisubharmonic function]\label{def2.2} Let $\Omega$ denote an $\calf$-open
(i.e., pluri\-finely open) subset of $\CC^n$.

(i) A function
$f:\Omega\to[-\infty,+\infty[\,$ is said to be {\it weakly $\Cal
F$-pluri\-sub\-har\-monic} if $f$ is $\calf$-upper
semi\-con\-tinu\-ous and, for every complex line $L$ in $\CC^n$,
the restriction of $f$ to any $\calf$-component of the finely open subset $L\cap\Omega$ of $L$
is either {\it finely subharmonic} or $\equiv-\infty$.

(ii) A function $f:\Omega\to\RR$ is said to be {\it $\calf$-cpsh}
if every point of \,$\Omega$ has a compact $\calf$-neighborhood $K$
in $\Omega$ such that $f|K\in{S(K)}$.

(iii) A function $f:\Omega\to[-\infty,+\infty[\,$ is said to be {\it
strongly $\calf$-pluri\-sub\-har\-monic} if $f$ is the pointwise
limit of a decreasing net of \,$\calf$-cpsh
functions on $\Omega$.
\end{definition}

Weakly $\calf$-plurisubharmonic functions were defined in \cite[Section 5]{ElKa},
 \cite[Definition 5.1]{EW2}).

 The concepts strongly and weakly $\calf$-pluri\-sub\-har\-monic are both $\Cal
F$-local ones (that is, these have the sheaf property: if a function is locally in $\fSH$ or in $\FPSH$, it is so globally).

One can fairly easily show that strongly $\calf$-plurisubharmonic functions are weakly $\calf$-plurisubharmonic. In case $n=1$ the notions are the same. A proof is indicated in Remark \ref{rem22.1}. Denote the class of all  weakly $\calf$-plurisubharmonic functions on an $\calf$-open set $\Omega$ by $\FPSH(\Omega)$. Then $\FPSH(\Omega)$ is a convex cone which is stable under taking the pointwise supremum of
finite families. Furthermore, $\FPSH(\Omega$) is stable under taking the
pointwise infimum for lower directed (possibly infinite) families, and is
closed under $\calf$-locally uniform convergence in view of analogous results for finely subharmonic functions, cf.~\cite[Lemma 9.6]{F1}.
Pointwise suprema of families of weakly $\Cal F$-pluri\-sub\-har\-monic functions are discussed in Theorem \ref{th3.9} below. The restriction of $f\in\FPSH(\Omega)$ to a complex affine subspace is of course weakly $\calf$-plurisubharmonic.

In the following two theorems we collect some further properties of weakly
$\calf$- pluri\-sub\-har\-monic functions.

\begin{theorem}\label{th2.3} $($\cite{EW2}$)$ Let $f$ be a weakly $\Cal
F$-pluri\-sub\-har\-monic function on an $\calf$-domain
$\Omega\subset\CC^n$ and let $E=\{z\in\Omega:f(z)=-\infty\}$.

{\rm(a)} If \,$f\not\equiv-\infty$ then $E$
has no $\calf$-interior point.

{\rm(b)} If \,$f\not\equiv-\infty$ then, for any $\calf$-closed set
$F\subset E$, $\Omega\setminus F$ is an
$\calf$-domain.

{\rm(c)} If \,$f\le0$ then either $f<0$ or $f\equiv0$.
 \end{theorem}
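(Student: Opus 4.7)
My plan is to prove the three parts in order, using (a) as input for (b) and treating (c) by a maximum-principle. The common engine throughout is that $f$ is finely subharmonic on the $\calf$-components of complex lines in $\Omega$, and Theorem \ref{thm2.5} lifts fine statements on lines to $\calf$-statements in $\CC^n$.

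For (a), let $W$ denote the $\calf$-interior of $E$ and assume $W \neq \emptyset$; the aim is to derive $f \equiv -\infty$, contradicting the hypothesis. The key local observation is: if $f \equiv -\infty$ on an $\calf$-neighborhood of a point $a$, then for every complex line $L$ through $a$ one has $f|L \equiv -\infty$ on the $\calf$-component $C_L$ of $a$ in $L\cap\Omega$. Indeed $f|L$ equals $-\infty$ on a fine-open subset of $L$ around $a$, and the dichotomy ``finely subharmonic or $\equiv -\infty$ on $C_L$'' combined with the fact that $\{u=-\infty\}$ has empty fine interior whenever $u$ is a finely subharmonic function not identically $-\infty$ forces $f|L \equiv -\infty$ throughout $C_L$. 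The task now reduces to showing that $W$ is $\calf$-closed in $\Omega$, whence $\calf$-connectedness of $\Omega$ gives $W=\Omega$. For this, take $z_1 \in \overline{W}^{\calf}\cap\Omega$, fix a basic $\calf$-neighborhood $U_1$ of $z_1$, and apply Theorem \ref{thm2.5} to obtain the structural set $V$, the constant $r>0$, and for each $v\in V$ the $\calf$-connected union $\bigcup_L X_{v,L}$ containing an $\calf$-neighborhood of $v$. Selecting $v \in W\cap V$ close to $z_1$ and applying the local observation at $v$ yields $f\equiv -\infty$ on $\bigcup_L X_{v,L}$, hence on an $\calf$-neighborhood of $v$. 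The main obstacle is the ``chaining'' from $v$ to $z_1$: one must verify, by varying $v$ in $V$ and iterating, that the resulting $-\infty$-set actually blankets an $\calf$-neighborhood of $z_1$. I expect this to be the most delicate point and to require careful use of the last assertion of Theorem \ref{thm2.5}.

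For (b), part (a) gives that $F\subset E$ has no $\calf$-interior, so $\Omega\setminus F$ is $\calf$-open and $\calf$-dense in $\Omega$. Suppose for contradiction $\Omega\setminus F = G\sqcup H$ with $G,H$ nonempty disjoint $\calf$-open. Since $\Omega$ is $\calf$-connected, there must be $z_0 \in F\cap\overline{G}^{\calf}\cap\overline{H}^{\calf}$. Apply Theorem \ref{thm2.5} at $z_0$ to obtain the $\calf$-connected set $S=\bigcup_L X_{z_0,L}$ containing an $\calf$-neighborhood of $z_0$, which therefore meets both $G$ and $H$. It suffices to show $S\setminus F$ is $\calf$-connected, contradicting $G\cap H=\emptyset$. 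For each complex line $L$, either $f|L\equiv -\infty$ on $X_{z_0,L}$, or $f|L$ is finely subharmonic on $X_{z_0,L}$ and then $F\cap X_{z_0,L}\subset\{f|L=-\infty\}$ is polar in $L$, so $X_{z_0,L}\setminus F$ remains fine-connected. The ``all-lines-bad'' scenario, in which the first alternative holds for every $L$ through $z_0$, would by Theorem \ref{thm2.5} make $z_0$ an $\calf$-interior point of $E$, contradicting (a); so the polar-removal alternative holds for sufficiently many lines, and a gluing along lines through $z_0$ yields the desired $\calf$-connectedness of $S\setminus F$.

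For (c), set $A=\{z\in\Omega:f(z)=0\}$ and assume $A\neq\emptyset$. For $z\in A$ and any complex line $L$ through $z$, the restriction $f|L$ is finely subharmonic on the $\calf$-component $C_L$ of $z$ in $L\cap\Omega$ (it is not identically $-\infty$, since $f|L(z)=0$) and attains its maximum $0$ at the interior point $z$; the fine maximum principle forces $f|L\equiv 0$ on $C_L$. By Theorem \ref{thm2.5} the union $\bigcup_L C_L$ contains an $\calf$-neighborhood of $z$, so $A$ is $\calf$-open. For $\calf$-closedness, if $z$ is an $\calf$-limit of $A$ in $\Omega$, then $\calf$-upper semicontinuity yields $f(z)\ge \limsup_{A\ni w\to z} f(w)=0$, and combined with $f\le 0$ this forces $f(z)=0$. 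Thus $A$ is a nonempty $\calf$-clopen subset of the $\calf$-domain $\Omega$, hence $A=\Omega$ and $f\equiv 0$.
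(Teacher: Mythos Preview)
Your overall strategy---reduce to the one-variable fine theory via the local line-structure furnished by Theorem~\ref{thm2.5}---is exactly what the paper indicates (it says only that the case $n>1$ follows from the case $n=1$ together with Theorem~\ref{thm2.5}), and your treatment of (c) is correct and clean.

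In (a) you correctly isolate the delicate point and leave it open; let me just remark that the difficulty is that the set $V=\{\phi\ge\phi(z_1)\}\cap B(z_1,R)$ of Theorem~\ref{thm2.5} need not be an $\calf$-neighborhood of $z_1$, so the existence of $v\in W\cap V$ does not follow directly from $z_1\in\overline W^{\calf}$. One way around this is to use the slightly larger set $\{\phi\ge c'\}\cap B(z_1,R)$ for some $c<c'<\phi(z_1)$, which \emph{is} an $\calf$-neighborhood of $z_1$; the uniform radius $r$ in Theorem~\ref{thm2.5} depends only on $c$, $R$ and $\phi$, so the same $r$ works and one reaches $z_1$ in a single step.

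In (b) there is a genuine gap that you have not flagged. You base the star $S=\bigcup_L X_{z_0,L}$ at the point $z_0\in F$. But two distinct complex lines through $z_0$ meet only at $z_0$, so the sets $X_{z_0,L}\setminus F$ for different $L$ are pairwise \emph{disjoint} once $z_0$ is removed. There is therefore no ``gluing along lines through $z_0$'': the pieces have lost their only common point. The remedy is to base the star at a point $v\in G\setminus E$ rather than at $z_0$. Such $v$ exist arbitrarily $\calf$-close to $z_0$ (since $E$ has empty $\calf$-interior by (a), $G\setminus E$ is $\calf$-dense in $G$, and $z_0\in\overline G^{\calf}$). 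For such $v$ every line is ``good'' because $f(v)>-\infty$, so each $X_{v,L}\setminus F$ is fine-connected \emph{and contains $v$}; hence $\bigl(\bigcup_L X_{v,L}\bigr)\setminus F$ is $\calf$-connected. The uniform radius in Theorem~\ref{thm2.5} (with the same adjustment as in (a)) then guarantees that this connected set contains an $\calf$-neighborhood of $z_0$ and therefore meets $H$, giving the contradiction.
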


This result was known for the case $n=1$, cf.~\cite{F1}. The case $n>1$ is proven using this case and Theorem \ref{thm2.5}, cf.~\cite{EW2}.

The next theorem gives a handle on the local structure of weakly $\calf$-plurisubharmonic functions. It is of fundamental importance in plurifine pluripotential theory. Therefore we will provide its proof here.

\begin{theorem}[\cite{EW3}] \label{thm2.4} Let $f$ be a weakly $\Cal
F$-pluri\-sub\-har\-monic function on an $\calf$-open set
$\Omega\subset\CC^n$, that maps $\Omega$ into a fixed bounded interval
$]a,b[$.

Every point $z_0\in\Omega$ has an
$\calf$-open $\calf$-neighborhood $O\subset\Omega$ on which $f$ can
be represented as the difference $f=\phi_1-\phi_2$ between two bounded
plurisubharmonic functions $\phi_1$ and $\phi_2$ defined on some open
ball $B(z_0,r)$ containing $O$. Moreover, $r$, $O$, and $\phi_2$ will depend on $\Omega$ and $]a,b[$, but can be chosen independently of $f$.
\end{theorem}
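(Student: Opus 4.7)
The plan is to produce, depending only on $z_0$, $\Omega$, and the bounds $a,b$, a Euclidean ball $B = B(z_0,r)$, an $\calf$-open $\calf$-neighborhood $O \subset \Omega \cap B$ of $z_0$, and a bounded plurisubharmonic ``compensator'' $\phi_2 \in \PSH(B)$ such that for every admissible $f$ one can extend $f + \phi_2$ from $O$ to a bounded psh function $\phi_1$ on $B$. By Proposition \ref{prop2:1}, $\Omega$ contains a basic $\calf$-open neighborhood of $z_0$ of the form $B(z_0, r_0) \cap \{g > 0\}$ for some $g \in \PSH(B(z_0, r_0))$ with $g(z_0) > 0$. Normalizing so that $0 \le g \le 1$ on a smaller $B = B(z_0, r)$ and $g(z_0) = 1$, I would set $O := B \cap \{g > 1/2\}$ and try the explicit
\[ \phi_2(z) := C\,g(z) + C'\, \|z - z_0\|^2, \]
with constants $C, C'$ chosen large relative to $b - a$, $r$, and $r_0$. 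Both summands are bounded and plurisubharmonic on $B$.

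Because $f$ is only $\calf$-upper semicontinuous and not Euclidean u.s.c.\ in general, I would not define $\phi_1 := f + \phi_2$ directly but instead
\[ \phi_1 := \bigl(\sup\{v \in \PSH(B) : v \le f + \phi_2 \text{ on } O,\ v \le b + \sup_B \phi_2\}\bigr)^*, \]
the upper semicontinuous regularization of the psh envelope dominated by $f + \phi_2$ on $O$. This $\phi_1$ is bounded and plurisubharmonic on $B$ by standard pluripotential theory. The whole problem reduces to verifying $\phi_1 = f + \phi_2$ on some $\calf$-neighborhood of $z_0$ inside $O$.

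The direction $\phi_1 \le f + \phi_2$ on $O$ is the easy one, following from $\calf$-upper semicontinuity of psh functions together with the domination condition built into the envelope. The main obstacle is the reverse inequality: producing, for each $z$ close to $z_0$, bona fide psh competitors $v$ on $B$ realizing (or closely approaching) $f(z) + \phi_2(z)$. Here I would exploit the weak $\calf$-plurisubharmonicity of $f$: its restriction to any complex line $L$ is bounded and finely subharmonic, and hence by Fuglede's one-variable result \cite{F1} locally equals the difference of two ordinary subharmonic functions on $L$. Averaging these one-dimensional representations over complex lines through $z$ should yield the required psh competitors. The delicate part, and the reason for the particular form of $\phi_2$, is uniformity across the family of lines through a neighborhood of $z_0$: the term $C\,g$ damps irregular behavior of $f$ near $\partial_\calf O$ (where $g \le 1/2$), while $C'\,\|z-z_0\|^2$ supplies the strict plurisubharmonicity needed to pass from one-dimensional to genuinely $n$-dimensional bounds. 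Since $O$, $r$, and $\phi_2$ are defined using only $z_0$, $g$, and the bounds $a, b$, they can be fixed before $f$ is specified, giving the claimed $f$-independence.
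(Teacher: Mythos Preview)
Your envelope approach has a genuine gap precisely at the step you yourself flag as the main obstacle: producing plurisubharmonic competitors $v$ on $B$ with $v(z)$ close to $f(z)+\phi_2(z)$. The suggestion of ``averaging one-dimensional representations over complex lines'' is not a construction. Fuglede's one-variable theorem does give, on each line $L$, a local decomposition $f|_L=u_L-w_L$ into subharmonic pieces, but these are non-unique, live on discs whose radii depend on $L$, and there is no operation that patches such a family $\{u_L\}$ into a single plurisubharmonic function on $B$; the strict-psh term $C'\|z-z_0\|^2$ does not address this and plays no role. Your $\phi_2=Cg+C'\|z-z_0\|^2$ also has the wrong sign profile: it is largest on $O$ and smallest off $O$, whereas what is needed is a compensator that is \emph{most negative} off the $\calf$-neighborhood, so that $f+\phi_2$ is forced below a fixed constant there.

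The paper bypasses envelopes by a direct gluing. After scaling so that $-1<f<0$, one picks a compact $\calf$-neighborhood $V\subset\Omega$ of $z_0$ and a negative $\varphi\in\PSH(B(z_0,r))$ with $\varphi\equiv-1$ on $B(z_0,r)\setminus V$ and $\varphi(z_0)=-1/2$. Then
\[
u_4(z)=\begin{cases}\max\{-4,\,f(z)+4\varphi(z)\}&z\in\Omega\cap B(z_0,r),\\ -4 & z\in B(z_0,r)\setminus V,\end{cases}
\]
is well defined on all of $B(z_0,r)$ since $f+4\varphi\le-4$ on the overlap. It is bounded and weakly $\calf$-plurisubharmonic, hence finely subharmonic on every complex line; being bounded on the Euclidean ball, \cite[Theorem~9.8]{F1} upgrades this to genuine subharmonicity on each line, and Lelong's classical criterion then gives $u_4\in\PSH(B(z_0,r))$. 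On $O=\{\varphi>-3/4\}$ one has $f+4\varphi>-4$, so $u_4=f+4\varphi$ there and $f=u_4-4\varphi=\phi_1-\phi_2$. The idea you are missing is this max-and-glue construction, combined with the passage ``bounded and subharmonic on each line $\Rightarrow$ plurisubharmonic'', which hands you $\phi_1$ directly and makes the envelope and the line-averaging unnecessary.
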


We will follow \cite{EW3}.

\begin{proof}   We
may assume that $-1<f<0$ by scaling $f$ if necessary.
Let $V \subset \Omega$ be a compact $\Cal{F}$-neighborhood of
$z_0$. Since the complement $\complement V$ of $V$ is pluri-thin at
$z_0$, there exist $ 0<r<1$ and a pluri\-sub\-har\-monic function
$\varphi$ on $B(z_0,r)$ such that
 $$
\limsup_{z \to z_{0},\,z\in \complement V }\varphi(z)<\varphi(z_{0}).
 $$
Without loss of generality we may suppose that $\varphi $ is negative
on $B(z_{0}, r)$ and
$$
\varphi(z)=-1\;\text{on}\;B(z_0,r)\setminus
V\quad\text{and}\;\phi(z_0)=-1/2.
 $$
Hence
 \begin{equation}
f(z)+\lambda\phi(z)\le-\lambda\qquad\text{for }z\in \Omega\cap
B(z_0,r)\setminus V\text{ and }\lambda>0.\label{eq2.3}
 \end{equation}
Now define a function $u_{\lambda}$ on $B(z_0,r)$ by
 \begin{equation}
u_\lambda(z)=\begin{cases}\max\{-\lambda,f(z)+\lambda\phi(z)\}&\text{for
$z\in \Omega\cap B(z_0,r)$}\\
-\lambda &\text{for $z\in B(z_0,
r)\setminus V$}.
 \end{cases}\label{eq2.4}
 \end{equation}
This definition makes sense because
$\bigl(\Omega\cap B(z_0,r)\bigr)\bigcup\bigl(B(z_0, r)\setminus
V\bigr)=B(z_0, r)$, and the two definitions agree on $\Omega\cap B(z_{0},
r)\setminus V$ in view of \eqref{eq2.3}.

Clearly, $u_{\lambda}$ is weakly $\calf$-pluri\-sub\-har\-monic on
$\Omega \cap B(z_0, r)$ and on $B(z_0, r)\setminus V$, hence on all of
$B(z_0, r)$ in view of the sheaf property, cf.\ \cite{EW2}. Since
$u_{\lambda}$ is bounded on $B(z_0, r)$, it follows from \cite[Theorem 9.8]{F1}
that $u_{\lambda}$ is subharmonic on each complex line
where it is defined. It is well known that a bounded function, which
is subharmonic on each complex line where it is defined, is
pluri\-sub\-har\-monic, cf. \cite{Le1} or \cite[p.\
24]{Le2}. Thus, $u_{\lambda}$ is pluri\-sub\-har\-monic on $B(z_0,r)$.

Since $\phi(z_0)=-1/2$, the set $O=\{z\in\Omega:\phi(z)>-3/4\}$ is an
$\Cal{F}$-neighborhood of $z_0$, and because $\phi=-1$ on $B(z_0,
r)\setminus V$ it is clear that $O\subset V\subset\Omega$.

Observe now that $-4\leq f(z)+ 4\phi(z)$ for every $z\in O$. Hence
$f=\phi_1-\phi_2$ on $O$, with $\phi_1=u_4$ and $\phi_2=4\phi$, both
pluri\-sub\-har\-monic on $B(z_0,r)$.

\end{proof}

\begin{corollary}
Every weakly $\calf$-plurisubharmonic function $f$ on $\Omega$ is $\calf$-continuous.
Hence, if for some $z\in\Omega$ $f(z)>-\infty$, then there is an $\calf$-neighborhood $O$ where $f$ can be written as a difference of plurisubharmonic functions defined in a neighborhood of $O$.
\end{corollary}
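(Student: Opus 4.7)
The plan is to reduce to the bounded case of Theorem \ref{thm2.4} by truncating $f$ from below, and then to recover a representation of $f$ itself (not just of the truncation) on a smaller $\calf$-neighborhood using the $\calf$-continuity of ordinary plurisubharmonic functions.

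First I would dispose of the case $f(z_0)=-\infty$: since $f$ is $\calf$-upper semicontinuous, for every $c\in\RR$ the set $\{f<c\}$ is an $\calf$-open neighborhood of $z_0$, and therefore $f(z)\to-\infty=f(z_0)$ as $z\to z_0$ in $\calf$, which is $\calf$-continuity at $z_0$. Now fix $z_0\in\Omega$ with $f(z_0)>-\infty$ and choose $N\in\NN$ with $N>-f(z_0)+1$. Set $g:=\max(f,-N)$. Because $\FPSH(\Omega)$ is stable under finite pointwise suprema and constants are weakly $\calf$-plurisubharmonic, we have $g\in\FPSH(\Omega)$. By $\calf$-upper semicontinuity of $f$, the set $U:=\{f<f(z_0)+1\}\cap\Omega$ is an $\calf$-open neighborhood of $z_0$, and on $U$ the function $g$ takes values in the bounded interval $(-N-1,\,f(z_0)+1)$.

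Next I would apply Theorem \ref{thm2.4} to $g|_U$ at $z_0$. This yields an $\calf$-open neighborhood $O\subset U$ of $z_0$, a Euclidean ball $B(z_0,r)\supset O$, and bounded plurisubharmonic functions $\phi_1,\phi_2$ on $B(z_0,r)$ with
\[
g=\phi_1-\phi_2\quad\text{on } O.
\]
In particular $\phi_1(z_0)-\phi_2(z_0)=g(z_0)=f(z_0)>-N$. Since $\phi_1,\phi_2\in\PSH(B(z_0,r))$, both are $\calf$-continuous by the very definition of $\calf$, hence so is $\phi_1-\phi_2$. Consequently
\[
O':=O\cap\{z: \phi_1(z)-\phi_2(z)>-N\}
\]
is an $\calf$-open neighborhood of $z_0$. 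On $O'$ we have $g=\phi_1-\phi_2>-N$, which forces $g=f$; therefore $f=\phi_1-\phi_2$ on $O'$. This is the asserted local representation, and it also yields the $\calf$-continuity of $f$ at $z_0$, since the right-hand side is $\calf$-continuous on $B(z_0,r)$.

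The main obstacle is that weak $\calf$-plurisubharmonicity only affords $\calf$-upper semicontinuity of $f$, so one cannot simply assert that $f>-N$ on some $\calf$-neighborhood of $z_0$; the trick is to perform the truncation, apply Theorem \ref{thm2.4} to $g$, and then use the (genuine) plurisubharmonicity of $\phi_1,\phi_2$ — via $\calf$-continuity of $\phi_1-\phi_2$ at $z_0$ — to extract a sub-neighborhood on which the truncation is inactive. Everything else is a routine combination of the sheaf property and the structure theorem already proved.
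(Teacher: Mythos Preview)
Your proof is correct and follows essentially the same route as the paper: truncate $f$ to a bounded weakly $\calf$-plurisubharmonic function, apply Theorem~\ref{thm2.4} to the truncation, and then use the $\calf$-continuity of the resulting difference of plurisubharmonic functions to carve out an $\calf$-neighborhood on which the truncation is inactive. The paper phrases this more tersely (showing directly that $\{d<f<c\}$ is $\calf$-open for all $d<c$), but the mechanism is identical.
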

For an unbounded $f$ just note that, given $d<c$, the set $E=\{f<c\}$ is $\calf$-open, and on $E$ the function $\max\{f, d\}$ is $\calf$-continuous, hence $\{d<f<c\}$ is $\calf$-open.
\begin{remark} \label{rem22.1}
The  \emph{extremal function} of $B\setminus V$, i.e. the function $\Psi^*=(\sup\{h: h\in \PSH_-(B), h|(B\setminus V)\le -1\})^*$ could have been used in the proof of Theorem \ref{thm2.4} instead of $\phi$. Then one should take $\phi_2=4\Psi^*$.
In case $n=1$ this function is harmonic except on the boundary of $V$. Approximating $B\setminus V$ from the inside with compact sets $K_n\nearrow B\setminus V$, and forming the corresponding $\Psi_n^*$, we have $\Psi_n^*$ harmonic in a neighborhood of $z_0$ and  $\Psi^*_n\downarrow \Psi^*$. The Brelot property, cf.~\cite{Fu72a} states that on a suitable compact fine neighborhood $K$ of $z_0$, both $\phi_1$ and $\phi_2$ are continuous in the Euclidean topology. Then $f=\lim\phi_1-\Psi^*_n$, a uniform limit on $K$ of subharmonic functions defined in a neighborhood of $K$, as announced after Definition \ref{def2.2}.
In case $n\ge2$ this breaks down for two reasons. $\Psi^*$ will in general not be pluriharmonic on $B\setminus \overline V$, and there is no Brelot property for plurisubharmonic functions.
\end{remark}

\begin{theorem}\label{thm2.41} Suppose that $f$ is a weakly $\calf$-plurisubharmonic function on an $\calf$-domain $\Omega$.
 If $f\not\equiv-\infty$
then $E=\{z\in\Omega:f(z)=-\infty\}$ is an $\calf$-closed, pluripolar subset of \,$\CC^n$.
\end{theorem}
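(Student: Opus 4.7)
The statement is in two parts: $\calf$-closedness of $E$ and pluripolarity of $E$ in $\CC^n$. For the former, the Corollary following Theorem \ref{thm2.4} gives that $f$ is $\calf$-continuous as a map into $[-\infty,+\infty)$, so $\{f>-\infty\}$ is the $\calf$-open preimage of $(-\infty,+\infty)$ and hence $E$ is $\calf$-closed.

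For pluripolarity, my plan is to establish that every $z_0\in\Omega$ admits an $\calf$-neighborhood $O\subset B(z_0,r)$ on which $E\cap O$ is pluripolar in $B(z_0,r)$. Once this local (in $\calf$) pluripolarity is proved, the $\calf$-open cover $\{O_{z_0}\}$ of $\Omega$ yields, via the quasi-Lindel\"of property, a countable subcover modulo a pluripolar set, so $E$ is a countable union of pluripolar sets. When $z_0\notin E$, the Corollary to Theorem \ref{thm2.4} provides $O$, a ball $B$, and $\phi_1,\phi_2\in\PSH(B)$ with $\phi_2$ bounded and $f=\phi_1-\phi_2$ on $O$; then $E\cap O\subset\{\phi_1=-\infty\}$, and $\phi_1(z_0)=f(z_0)+\phi_2(z_0)>-\infty$ ensures that $\phi_1\not\equiv-\infty$, so its pole set is pluripolar.

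The main obstacle is the case $z_0\in E$, where Theorem \ref{thm2.4} does not apply to $f$ directly. My plan is to truncate and pass to a monotone limit. By $\calf$-upper semicontinuity there is an $\calf$-neighborhood $W\subset\Omega$ of $z_0$ on which $f\le 0$; choose a compact $\calf$-neighborhood $V$ of $z_0$ with $V\subset W$ and carry out the construction from the proof of Theorem \ref{thm2.4} using $V$, obtaining a ball $B=B(z_0,r)$, a plurisubharmonic function $\phi$ on $B$ with $\phi\le 0$, $\phi=-1$ on $B\setminus V$ and $\phi(z_0)=-1/2$, and the $\calf$-neighborhood $O=\{\phi>-3/4\}$ of $z_0$. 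Since the ball $B$, the function $\phi$, and the set $O$ depend only on $z_0$ and $V$ (not on the function being decomposed), applying Theorem \ref{thm2.4} to each truncation $g_n:=\max(f,-n)\in[-n,0]$ with $\lambda_n=4n$ produces a single family of plurisubharmonic functions $\phi_{1,n}=u_{4n}$ on $B$ satisfying $g_n=\phi_{1,n}-4n\phi$ on $O$, for all $n$ simultaneously. The rescaled functions $\tilde\psi_n:=\phi_{1,n}/n$ lie in $\PSH(B)$ and are uniformly bounded in $[-4,0]$; a direct computation gives $\tilde\psi_n\equiv 4\phi-1$ on $E\cap O$ and, on $(\Omega\setminus E)\cap O$ for $n$ sufficiently large, $\tilde\psi_n=f/n+4\phi$, which increases monotonically to $4\phi$. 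A routine case check on $V\setminus O$ and $B\setminus V$ then shows that $(\tilde\psi_n)$ is non-decreasing pointwise on $B$.

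Setting $\ell:=\lim_n\tilde\psi_n$, the Bedford--Taylor theorem on negligible sets guarantees that the upper semicontinuous regularization $\ell^*$ is plurisubharmonic on $B$ and $\{\ell<\ell^*\}$ is pluripolar. To conclude that $E\cap O\subset\{\ell<\ell^*\}$, fix $z\in E\cap O$: since by Theorem \ref{th2.3}(a) $E$ has no $\calf$-interior, there is a net $(w_\alpha)$ in $\Omega\setminus E$ converging to $z$ in the $\calf$-topology, hence also in the Euclidean topology (as $\calf$ refines the Euclidean topology); the $\calf$-continuity of the plurisubharmonic function $\phi$ yields $\phi(w_\alpha)\to\phi(z)$, so $\ell(w_\alpha)=4\phi(w_\alpha)\to 4\phi(z)$, which forces $\ell^*(z)\ge 4\phi(z)>4\phi(z)-1=\ell(z)$. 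Hence $z\in\{\ell<\ell^*\}$, and combining with the quasi-Lindel\"of step above completes the argument.
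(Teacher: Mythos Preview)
Your proof is correct and follows essentially the same route as the paper's own argument: truncate, re-run the construction from Theorem~\ref{thm2.4} with a fixed $\phi$ (the paper uses the extremal function $\Psi^*$ of Remark~\ref{rem22.1}, but that is immaterial), obtain an increasing sequence of bounded plurisubharmonic functions whose pointwise limit agrees with $4\phi$ off $E$ and is strictly smaller on $E\cap O$, and finish via negligible sets and the quasi-Lindel\"of property. The only cosmetic difference is that you rescale at the end ($\tilde\psi_n=\phi_{1,n}/n$) whereas the paper rescales at the start ($f_n=\max(f,-n)/(4n)$); your explicit invocation of Theorem~\ref{th2.3}(a) to produce the approximating net and of the Bedford--Taylor negligible-set theorem makes precise what the paper leaves implicit.
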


A few words about the proof. In an $\calf$-neighborhood of $z_0\in E$, $f$ will be negative. Now keep the notation of Theorem \ref{thm2.4} and write $f_n=\max(f,-n)/(4n)$ as $u_n-\Psi^*$ with $\Psi^*$ as in Remark \ref{rem22.1}, and $u_n$ defined completely analogous to \eqref{eq2.4}
\begin{equation}
u_n(z)=\begin{cases}\max\{-1,f_n(z)+\Psi^*(z)\}&\text{for
$z\in \Omega\cap B(z_0,r)$}\\
-1 &\text{for $z\in B(z_0,r)\setminus V$}.
 \end{cases}\label{eq2.5}
 \end{equation}
The $u_n$ are plurisubharmonic and will increase to $\Psi^*$ except at points of $E$,
which will imply that $E$ is pluripolar, first $\calf$-locally and then by the quasi-Lindel\"of property also globally.

\smallskip

It is unknown whether plurisubharmonic functions have the Brelot property, but  a weak version of it holds.

\begin{theorem}[quasi-Brelot property, cf.~\cite{EW2}]
Suppose that $f$ is a weakly $\calf$-plurisubharmonic function on an $\calf$-domain $\Omega$, then there exists a pluripolar $E\subset\Omega$ such that every $z\in\Omega\setminus E$ admits an $\calf$-neighborhood $K_z$ such that $f|K_z$ is Euclidean continuous.
\end{theorem}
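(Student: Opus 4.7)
The plan is to combine Theorem~\ref{thm2.4} and Theorem~\ref{thm2.41} to reduce the problem to an auxiliary quasi-Brelot statement for bounded ordinary plurisubharmonic functions, and then deduce the latter from Bedford--Taylor quasi-continuity together with a Borel--Cantelli argument on relative extremal functions.

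First I would use Theorem~\ref{thm2.41} to discard the pluripolar, $\calf$-closed set $E_0=\{f=-\infty\}$. For each $z_0\in\Omega\setminus E_0$, Theorem~\ref{thm2.4} supplies a Euclidean ball $B_0=B(z_0,r_0)$, an $\calf$-neighborhood $O_0\subset B_0$ of $z_0$, and bounded $\phi_1,\phi_2\in\PSH(B_0)$ with $f=\phi_1-\phi_2$ on $O_0$. It therefore suffices to prove the sub-lemma \emph{for every bounded $\phi\in\PSH(B)$ there is a pluripolar set $Q(\phi)\subset B$ such that every $z\in B\setminus Q(\phi)$ has a compact $\calf$-neighborhood on which $\phi$ is Euclidean continuous}, then apply it to $\phi_1,\phi_2$, and globalize via the quasi-Lindel\"of property.

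For the sub-lemma I would invoke Bedford--Taylor quasi-continuity to choose, for each $k\ge 1$, a Euclidean-open $G_k\subset B$ with $\Capa(G_k)<2^{-k}$ such that $\phi|_{B\setminus G_k}$ is Euclidean continuous. Let $u_k\in\PSH(B)$ be the upper semicontinuous regularization of the relative extremal function of $G_k$ in $B$; since $G_k$ is open, $u_k\equiv-1$ on $G_k$, and standard pluripotential estimates force $u_k\to 0$ in capacity, hence in $L^1_{\mathrm{loc}}(B)$. After passing to a subsequence if necessary, the partial sums of $U:=\sum_{k\ge 1}u_k$ decrease to a plurisubharmonic function not identically $-\infty$, so $Q(\phi):=\{U=-\infty\}$ is pluripolar. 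For $z\in B\setminus Q(\phi)$ the series $\sum u_k(z)$ converges, so $u_k(z)\to 0$; choosing $k$ with $u_k(z)>-1$, the identity $u_k\equiv-1$ on $G_k$ gives $\limsup_{w\to z,\,w\in G_k}u_k(w)\le -1<u_k(z)$, so $G_k$ is plurithin at $z$. Hence $B\setminus G_k$ is an $\calf$-neighborhood of $z$, and intersecting with a small closed Euclidean ball about $z$ produces a compact $\calf$-neighborhood $K_z$ on which $\phi$ is Euclidean continuous.

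Applying the sub-lemma to $\phi_1$ and $\phi_2$ yields a pluripolar $P_{z_0}\subset B_{z_0}$ such that each $z\in O_{z_0}\setminus P_{z_0}$ has a compact $\calf$-neighborhood on which $\phi_1,\phi_2$, and hence $f$, are Euclidean continuous. The quasi-Lindel\"of property then lets me extract from $\{O_{z_0}\}$ a countable subcover of $\Omega\setminus E_0$ up to a pluripolar remainder $N$, so that $E:=E_0\cup N\cup\bigcup_n P_{z_n}$ is pluripolar and has the desired property. The hard step is the sub-lemma, and within it the passage from ``$\Capa(G_k)$ small'' to ``$G_k$ plurithin at $z$'' outside a pluripolar exceptional set; this is exactly what the relative extremal functions $u_k$ together with the $L^1_{\mathrm{loc}}$-summability of $\sum u_k$ are designed to deliver.
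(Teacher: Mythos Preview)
The paper does not supply a proof of this theorem; it is stated with a reference to \cite{EW2} and nothing more, so there is no in-text argument to compare against. Your overall strategy---reduce via Theorems~\ref{thm2.4} and~\ref{thm2.41} to bounded ordinary plurisubharmonic functions on a Euclidean ball, invoke Bedford--Taylor quasi-continuity to get Euclidean continuity off small open sets $G_k$, use the relative extremal functions $u_k^*$ of the $G_k$ to detect plurithinness, and globalize by quasi-Lindel\"of---is correct and is essentially the argument one finds in \cite{EW2,EW3}.

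One step in your sub-lemma is asserted too quickly. You write that ``standard pluripotential estimates force $u_k\to 0$ in capacity, hence in $L^1_{\mathrm{loc}}$,'' so that $U=\sum u_k$ is plurisubharmonic and $\not\equiv-\infty$. Small relative capacity controls the Monge--Amp\`ere mass $\int(dd^cu_k^*)^n$, not the $L^1$ norm directly, and convergence in capacity does not by itself imply convergence in $L^1_{\mathrm{loc}}$. The missing bridge is the capacitary Chebyshev inequality
\[
\Capa\bigl(\{u_k^*<-t\},B'\bigr)\le t^{-n}\,\Capa(G_k,B'),\qquad 0<t<1,
\]
obtained by applying the comparison principle to $\max(u_k^*/t,-1)$ versus $u_{\{u_k^*<-t\}}^*$ and using that both have the same boundary values. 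Combined with the domination of Lebesgue measure by relative capacity on compacta, this gives $\lambda(\{u_k^*<-t\}\cap K)\le C_K\,t^{-n}2^{-k}$, and a layer-cake computation yields $\|u_k^*\|_{L^1(K)}\le C_K'\,2^{-k/n}$, which is summable. With this made explicit, the set $\{U=-\infty\}$ is indeed pluripolar and the rest of your argument goes through. A minor variant that avoids the summation: work with $H_N=\bigcup_{k\ge N}G_k$ and the single extremal function $v_N=u_{H_N}^*$; the same estimate shows $v_N\uparrow 0$ quasi-everywhere, and off that pluripolar exceptional set some $H_N$ (hence some $G_k$) is plurithin.
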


Notice that $f$-subharmonic functions on Euclidean domains in $\RR^n$, ($n\ge 3$) need not be subharmonic, but there is no difference between $\calf$-plurisubharmonic and plurisubharmonic functions on Euclidean open sets, cf.~\cite{EFW11}.

\begin{prop}\label{prop2.14} Let $\Omega$ be a Euclidean open
subset of \,$\CC^n$. For a function
$f:\Omega\to[-\infty,+\infty[\,$ the following are equivalent:

\begin{description}
\item[\phantom{i}\rm i] $f$ is pluri\-sub\-har\-monic $($in the ordinary sense$)$.
\item[\rm ii] $f$ is weakly $\calf$-pluri\-sub\-har\-monic and not
identically $-\infty$ on any com\-po\-nent of \,$\Omega$.
 \end{description}
\end{prop}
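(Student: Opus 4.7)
The direction (i)$\Rightarrow$(ii) is essentially formal. A plurisubharmonic $f$ is Euclidean upper semicontinuous and hence $\calf$-upper semicontinuous (since $\calf$ refines the Euclidean topology), and its restriction to each complex line $L$ is ordinary subharmonic on the Euclidean open slice $L\cap\Omega$; restricting further to any $\calf$-component of $L\cap\Omega$ gives a finely subharmonic function, or $\equiv-\infty$ on the whole component. The non-triviality on Euclidean components is part of the usual PSH definition.

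For the converse I would first sharpen the hypothesis from ``$f$-subharmonic on $\calf$-components of line slices'' to ``subharmonic on each complex line slice''. Fix a complex line $L$ and a Euclidean connected component $D$ of $L\cap\Omega$. Being a planar domain, $D$ is polygonally connected, and by Lemma~\ref{lemma2.4a} together with Lemma~\ref{lemma2.4} (applied to a polygonal path joining any two of its points) $D$ lies in a single $f$-component of $L\cap\Omega$. Thus $D$ itself is an $f$-component, so the hypothesis makes $f|_D$ finely subharmonic or $\equiv-\infty$. By the fact noted after Definition~\ref{def21.1} a finely subharmonic function on a Euclidean open subset of $\RR^2$ is automatically subharmonic (no boundedness is needed in the plane), so $f|_D$ is subharmonic.

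The main obstacle is to upgrade the $\calf$-upper semicontinuity of $f$ to Euclidean upper semicontinuity, because $\calf$ is strictly finer than the Euclidean topology. My plan here is to bring in the Euclidean USC regularization $f^{\ast}(z)=\limsup_{w\to z}f(w)$. From the previous step $f$ satisfies the sub-mean-value inequality on circles contained in complex lines; integrating over directions through a point (Fubini) yields the sub-mean-value inequality on small Euclidean balls for $f$, and Lelong's classical criterion then gives that $f^{\ast}$ is plurisubharmonic. Here I invoke Theorem~\ref{thm2.41} to ensure $\{f=-\infty\}$ is pluripolar, so $f^{\ast}\not\equiv-\infty$ on any Euclidean component, and the standard potential-theoretic theory also gives that $\{f^{\ast}>f\}$ is pluripolar.

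It remains to deduce $f=f^{\ast}$ pointwise. The PSH function $f^{\ast}$ is $\calf$-continuous. Suppose $f(z_0)<f^{\ast}(z_0)$ at some $z_0$. Since the pluripolar set $\{f^{\ast}>f\}$ has empty $\calf$-interior (apply Theorem~\ref{th2.3}(a) to a PSH function whose $-\infty$-set contains it), the set $\{f=f^{\ast}\}$ is $\calf$-dense in $\Omega$. For any $\eps>0$, $\calf$-continuity of $f^{\ast}$ supplies an $\calf$-neighborhood $U$ of $z_0$ on which $f^{\ast}>f^{\ast}(z_0)-\eps$; on the $\calf$-dense subset $U\cap\{f=f^{\ast}\}$ this gives $f>f^{\ast}(z_0)-\eps$, and the $\calf$-upper semicontinuity of $f$ at $z_0$ then forces $f(z_0)\ge f^{\ast}(z_0)-\eps$. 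Letting $\eps\to 0$ yields a contradiction, so $f=f^{\ast}$ is PSH.
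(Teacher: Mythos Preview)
The paper does not include its own proof of Proposition~\ref{prop2.14}; it simply cites \cite{EFW11}. So I can only assess your argument on its own merits.

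Your treatment of (i)$\Rightarrow$(ii) and the first step of (ii)$\Rightarrow$(i) are fine: the observation that each Euclidean component of $L\cap\Omega$ is already a single fine component, together with the planar fact ``finely subharmonic on a Euclidean open set $\Rightarrow$ subharmonic'', correctly yields that $f|_{L}$ is genuinely subharmonic (or $\equiv-\infty$) on every complex line slice.

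The gap is in the passage to Euclidean upper semicontinuity. Your plan is to show that the Euclidean regularization $f^{*}$ is plurisubharmonic and then argue $f=f^{*}$, but the very step ``$f^{*}$ is plurisubharmonic'' presupposes that $f$ is \emph{Euclidean} locally bounded from above: without this, $f^{*}$ may take the value $+\infty$, the Fubini averaging you invoke may produce only the vacuous inequality $f(z)\le+\infty$, and Lelong's criterion (which, as recalled in Section~\ref{Bounded}, \emph{assumes} local boundedness from above) does not apply. All you have from the hypotheses is $\calf$-local boundedness from above (a consequence of $\calf$-upper semicontinuity), and bridging the gap from $\calf$-local to Euclidean local boundedness is exactly the substantive content that remains unproved. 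Your clever $\calf$-density argument in the last paragraph is correct \emph{conditionally on} $f^{*}\in\PSH$, but that conditional input is what is missing.

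Note also that once Euclidean local boundedness is in hand, Lelong's theorem gives directly that $f$ itself is plurisubharmonic (locally bounded above $+$ subharmonic on every complex line $\Rightarrow$ upper semicontinuous $\Rightarrow$ PSH), so the whole $f^{*}$ detour becomes unnecessary. In other words, the difficulty you correctly identified---upgrading $\calf$-USC to Euclidean USC---is equivalent to establishing Euclidean local boundedness from above, and your proposal does not actually supply that step.
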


\subsection{Lelong characterization}\label{Bounded}
When Lelong defined pluri\-sub\-harmonic functions, \cite{Lelong42}, he set out from functions that are locally bounded from above and have the property that their restrictions to complex lines are subharmonic. These are indeed upper semicontinuous, and hence plurisubharmonic.
We show that this also holds in the weakly (and therefore also in the strongly) $\cal F$ situation.
\begin{theorem}
Suppose that $f$ is an $\cal F$-locally bounded from above function on an $\cal F$-domain $D$ with the property that $f|D\cap L$ is finely subharmonic for every complex line $L$.
Then $f$ is $\cal F$-PSH on $D$.
\end{theorem}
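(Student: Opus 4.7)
The plan is to verify both clauses in the definition of weak $\calf$-plurisubharmonicity. Clause (ii), fine subharmonicity on complex lines, is precisely the hypothesis (restrictions to individual fine components of $D\cap L$ inherit fine subharmonicity), so all the work lies in clause (i), $\calf$-upper semicontinuity. I would mimic the construction in the proof of Theorem \ref{thm2.4}, with the twist that $f$ need not be bounded from below: introduce truncations $f_n=\max(f,-n)$, show each $f_n$ is locally a difference of PSH functions, and then recover $f$ as the pointwise decreasing limit of $\calf$-continuous functions.

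Fix $z_0\in D$. Since $\calf$-upper semicontinuity is local, I may shrink $D$ to an $\calf$-neighborhood of $z_0$ on which $f$ is bounded above and, after subtracting a constant, assume $f<0$ on $D$. As in the proof of Theorem \ref{thm2.4}, choose a compact $\calf$-neighborhood $V\subset D$ of $z_0$, a Euclidean ball $B(z_0,r)$, and a PSH function $\phi$ on $B(z_0,r)$ with $\phi<0$, $\phi(z_0)=-1/2$ and $\phi\equiv -1$ on $B(z_0,r)\setminus V$. For $n\ge 1$ set $f_n=\max(f,-n)$; since the maximum of two finely subharmonic functions is finely subharmonic, $f_n|_{D\cap L}$ is finely subharmonic on every complex line $L$, and $-n\le f_n\le 0$ on $D$. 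In analogy with \eqref{eq2.4} define
\begin{equation*}
u_n(z)=\begin{cases}\max\{-4n,\,f_n(z)+4n\phi(z)\}&\text{if }z\in D\cap B(z_0,r),\\ -4n&\text{if }z\in B(z_0,r)\setminus V.\end{cases}
\end{equation*}
The two branches agree on $(D\cap B(z_0,r))\setminus V$, where $\phi\equiv -1$ and $f_n\le 0$ force $f_n+4n\phi\le -4n$.

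The central step is to promote $u_n$ to a bona fide PSH function on $B(z_0,r)$. On any complex line $L$, the restriction $u_n|_L$ is finely subharmonic on $D\cap B(z_0,r)\cap L$ (the maximum of $-4n$ with the sum $f_n|_L+4n\phi|_L$ of a finely subharmonic and a subharmonic function), and finely subharmonic on the $\calf$-open set $(B(z_0,r)\setminus V)\cap L$ (a constant). The two pieces agree on the overlap, so the sheaf property of $\fSH$ on $L$ produces a finely subharmonic function on $B(z_0,r)\cap L$. Since $-4n\le u_n\le 0$, Fuglede's theorem (\cite[Theorem 9.8]{F1}) upgrades this to classical subharmonicity on every complex line, and Lelong's criterion then gives $u_n\in\PSH(B(z_0,r))$.

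The $\calf$-open set $O=\{z\in B(z_0,r):\phi(z)>-3/4\}\subset V$ is an $\calf$-neighborhood of $z_0$. On $O$ one has $f_n+4n\phi\ge -n-3n=-4n$, hence $u_n=f_n+4n\phi$ and therefore $f_n=u_n-4n\phi$, a difference of PSH functions, which is in particular $\calf$-continuous on $O$. As $n\to\infty$, $f_n\downarrow f$ pointwise, so $f$ is $\calf$-upper semicontinuous on $O$; since $z_0\in D$ was arbitrary, $f$ is $\calf$-upper semicontinuous on $D$, completing the verification. The main obstacle is precisely the possible unboundedness of $f$ from below, which blocks a direct appeal to Theorem \ref{thm2.4}; the truncation $f_n$ and the sheaf-theoretic gluing of fine subharmonic functions on each complex line are what get around it.
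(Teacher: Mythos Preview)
Your proof is correct and follows essentially the same route as the paper: truncate to $f_n=\max(f,-n)$, run the gluing construction of Theorem~\ref{thm2.4} to produce a bounded function $u_n$ on a Euclidean ball, argue that $u_n$ is subharmonic on every complex line (fine subharmonicity plus the sheaf property of $\fSH$ on $L$, then Fuglede's \cite[Theorem 9.8]{F1} to upgrade), invoke Lelong's criterion to get $u_n\in\PSH$, and conclude $\calf$-upper semicontinuity of $f$ as a decreasing limit of $\calf$-continuous $f_n$. The only cosmetic difference is that the paper first isolates the bounded case and then points back to Theorem~\ref{thm2.4}, whereas you carry the truncation parameter $n$ through the construction; you also make explicit the line-by-line sheaf argument that the paper leaves implicit in ``copying the proof of Theorem~\ref{thm2.4}''.
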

\begin{proof}
We first observe that if we can prove the result for the $\cal F$-locally bounded functions $\max\{f, -n\}$ we are done, because then $f$ is  the limit of a decreasing sequence of $\cal F$-psh functions.
Now assume that $f$ is $\cal F$-locally bounded. By  copying the proof of Theorem \ref{thm2.4}, we see that every point $z\in D$ admits a ball $B(z, r)$ and an $\cal F$-neighborhood $K_z$ on which $f=\psi-\phi$, where $\phi\in \PSH(B(z,r))$, and $\psi$ is defined on $B(z,r)$ and is again of the form $\max\{f+\psi, C\}$ glued to the constant $C$.  We observe that $\psi$ is a bounded function on $B(z,r)$ and its restriction to complex lines is subharmonic. Therefore, by Lelong's theorem $\psi\in PSH(B(z,r))$, hence $f$ is $\cal F$ continuous on $K_z$, and by varying $z$ and the sheaf property, also on $D$. It follows that $f$ is $\cal F$ plurisubharmonic.
\end{proof}

\section{$\Cal F$-pluri\-sub\-har\-monic functions as invariant f-subharmonic functions}
\subsection{Main Theorem}
 As is well-known, a plurisubharmonic function $f$ on a domain
$\Omega\subset\CC^n$ is subharmonic when considered as a function
on $\Omega\subset\RR^{2n}$, because the average of $f$ over a
sphere can be expressed in terms of the average of $f$ over the
circles that are intersection of the sphere with complex lines passing
through the center. While this approach does not work in the fine
setting, an analogous result nevertheless is valid.  Indeed, a
well-known characterization of pluri\-sub\-har\-monic functions (see
\cite[Th\'eor\`eme 1 (p.\ 18)]{Le2} or \cite[Theorem
2.9.12]{K}) may be adapted as follows.

\begin{theorem}\label{th3.1}  \cite{EFW11} Let $\Omega$ be $\calf$-open in $\CC^n$. A function $f:\Omega\to[-\infty,+\infty[$
 is weakly $\Cal
F$-pluri\-sub\-har\-monic if and only if $f$ is $\calf$-locally bounded from above
and for every $\CC$-affine bijection $h$ of \,$\CC^n$ the function $f\circ h$ is $\Bbb
R^{2n}$-finely subharmonic on each fine component of the $\calf$-open set $h^{-1}(\Omega)$ on which $f\circ h\not \equiv -\infty$.
 \end{theorem}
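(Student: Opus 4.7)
Both sides of the claimed equivalence are invariant under composition with any $\CC$-affine bijection $h$ of $\CC^n$: the left side because such maps permute complex lines and weak $\calf$-plurisubharmonicity is defined through complex lines, and the right side by its explicit quantification over all $h$. So in each direction it suffices to argue the case $h=\id$ and conclude by invariance.

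For the forward direction let $f\in\FPSH(\Omega)$. $\calf$-upper semicontinuity yields both $\calf$-local boundedness from above and, since the $\RR^{2n}$-fine topology is finer than $\calf$, fine upper semicontinuity. Replacing $f$ by the decreasing sequence $\max(f,-n)$ and using that decreasing limits of finely subharmonic functions are finely subharmonic, I reduce to bounded $f$. Theorem \ref{thm2.4} supplies, at each $z_0\in\Omega$, an $\calf$-neighbourhood $O\subset B(z_0,r)$ and $u_\lambda,\phi\in\PSH(B(z_0,r))$ with $f=u_\lambda-\lambda\phi$ on $O$. For a fine open $V\ni z_0$ with $\overline{V}^{f}\subset O$, the sub-mean-value property of the psh function $u_\lambda$ together with the identity $u_\lambda=f+\lambda\phi$ on $\overline{V}^{f}$ rearranges to
\[
f(z_0)\le\int_{\partial_f V}f\,d\delta_{z_0}^{\comp V}+\lambda\Bigl(\int_{\partial_f V}\phi\,d\delta_{z_0}^{\comp V}-\phi(z_0)\Bigr).
\]
As $V$ shrinks along a fine local base of $z_0$, the sweep measure concentrates fine-locally at $z_0$ and, because $\phi$ is $f$-continuous, the correction term tends to $0$, producing the mean-value inequality of Definition \ref{def21.1} on a suitable base.

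For the backward direction, the Lelong-type characterisation of subsection \ref{Bounded}, together with the hypothesised $\calf$-local boundedness, reduces matters to showing that the restriction $f|_L$ is $\CC$-finely subharmonic on the fine components of $L\cap\Omega$ where it is not $\equiv-\infty$, for every complex line $L=z_0+\CC v$. Choose a $\CC$-affine bijection $h(w)=z_0+Aw$ with $Ae_1=v$ and consider the family of $\CC$-affine bijections $h_\epsilon=h\circ D_\epsilon$ that rescale the transverse coordinates $w_2,\ldots,w_n$ by $\epsilon>0$. Each $f\circ h_\epsilon$ is $\RR^{2n}$-finely subharmonic by hypothesis, and as $\epsilon\to0$ the family converges pointwise to $w\mapsto f(z_0+w_1v)$, a function depending only on $w_1$. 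Truncation by $\max\{\cdot,-n\}$ together with \cite[Theorem 9.8]{F1} upgrades the $f\circ h_\epsilon$ to ordinary subharmonic functions on $\RR^{2n}$ whose limit, viewed as a function of the single complex variable $w_1$, is subharmonic on the $\CC$-line; passing back through the truncation yields $\CC$-fine subharmonicity of $f|_L$, so the Lelong-type theorem closes the argument.

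The main obstacle in both directions is that fine subharmonicity is stable neither under subtraction (forward) nor under naive restriction to a proper $\CC$-affine subspace (backward). The forward direction tames this by a limiting argument that kills the correction term produced by the auxiliary psh function $\phi$ of Theorem \ref{thm2.4}. The backward direction crucially exploits the invariance of the hypothesis under the full $\CC$-affine group (which, unlike the $\RR$-orthogonal group, does not preserve $\RR^{2n}$-subharmonicity in general) by means of the rescaling family $h_\epsilon$, together with the truncation--\cite[Theorem 9.8]{F1} device that bridges fine and classical subharmonicity where the restriction to a complex line is well-behaved.
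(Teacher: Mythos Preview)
Both directions of your argument have genuine gaps, and in each case the paper uses a different, essential tool that you have bypassed.

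\textbf{Forward direction.} Your correction-term argument does not close. From $u_\lambda=f+\lambda\phi$ on $O$ and the swept mean-value inequality for the subharmonic $u_\lambda$ you correctly obtain
\[
f(z_0)\le\int_{\partial_f V} f\,d\delta_{z_0}^{\comp V}
+\lambda\Bigl(\int_{\partial_f V}\phi\,d\delta_{z_0}^{\comp V}-\phi(z_0)\Bigr),
\]
but since $\phi$ is \emph{sub}harmonic the bracket is $\ge0$, so the inequality is weaker than what you need, and the fact that the correction tends to $0$ along a base does not manufacture a base on which $f(z_0)\le\int f\,d\delta_{z_0}^{\comp V}$ actually holds. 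The paper avoids this sign problem altogether: it invokes Lemma~\ref{lemma3.2}, which says that for $f=u_1-u_2$ with $u_i$ bounded subharmonic, fine subharmonicity on a fine open $U$ is equivalent to $\Delta f|_U\ge0$. One then checks positivity of $\Delta f$ on $U$ by Fubini, integrating the line-wise positivity (again from Lemma~\ref{lemma3.2}) of $\Delta(f|_L)$ over directions. This Riesz-measure characterisation is the missing idea.

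\textbf{Backward direction.} The rescaling family $h_\eps$ does not do what you want. First, pointwise convergence $f\circ h_\eps(w)\to f(z_0+w_1v)$ requires Euclidean continuity of $f$ along the transverse directions, which an $\calf$-continuous function need not have; generically the limit does not exist. Second, your appeal to \cite[Theorem~9.8]{F1} to ``upgrade'' $f\circ h_\eps$ to ordinary subharmonic functions is illegitimate: that theorem needs a Euclidean open domain, whereas $h_\eps^{-1}(\Omega)$ is only $\calf$-open. The paper's route is again measure-theoretic: Lemma~\ref{lemma3.3} (the analogue of Theorem~\ref{thm2.4} under the $\RR^{2n}$-fine hypothesis) writes $f=f_1-f_2$ locally with $f_1,f_2$ \emph{plurisubharmonic}, so that $dd^cf$ is a well-defined $(1,1)$-current which is $\ge0$ on $K_{z_0}$ by repeated use of Lemma~\ref{lemma3.2}. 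The passage to complex lines is then handled by the Bedford--Taylor slicing result, Lemma~\ref{lemma3.5}, which shows that the slice measure $dd^c(f|_L)$ is the limit of transverse averages of $dd^cf$ and hence positive; Lemma~\ref{lemma3.2} in the reverse direction gives fine subharmonicity of $f|_L$.

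In short, the decisive tools you are missing are Lemma~\ref{lemma3.2} (Riesz-measure criterion for fine subharmonicity of differences of bounded subharmonic functions) in both directions, Fubini for the forward direction, and Lemma~\ref{lemma3.5} (slicing of $dd^c$) for the backward direction.
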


The prefix `$\RR^{2n}$-fine' refers to concepts relative to the
Cartan fine topology on $\CC^n\cong\RR^{2n}$. Recall that this
topology is finer than the plurifine topology $\calf$, \cite{F6}. This explains why the condition ``$\calf$-locally bounded'' occurs in the statement of the theorem.

For the proof of Theorem \ref{th3.1} one needs the following

\begin{lemma}\label{lemma3.2} \cite{EFW11} Let $u_1,u_2$ be bounded subharmonic functions
on an open set $B\subset\Bbb
R^n$, and consider the function $f=u_1-u_2$ on $B$. Let $U$ be a
finely open Borel subset of $B$. Then $f|U$ is finely subharmonic if and
only if the signed Riesz measure $\Delta f$ on $B$ has a positive
restriction to $U$.
 \end{lemma}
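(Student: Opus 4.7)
The plan is to localize via the Riesz decomposition, reduce the equivalence to a cleaner statement about Green potentials of positive measures, and then invoke Fuglede's identification of the fine Riesz measure with the restriction of the distributional Laplacian for the substantive direction. Both conditions in the lemma are local in the Euclidean topology of $B$, so it suffices to work on a Euclidean ball $B(z_0,r)\Subset B$ about an arbitrary $z_0\in B$. On such a ball the Riesz decomposition gives $u_i=-G\mu_i+h_i$, where $G\mu_i$ denotes the Green potential of $\mu_i:=\Delta u_i|_{B(z_0,r)}$ (a non-negative superharmonic function) and $h_i$ is harmonic; thus $f=G\mu_2-G\mu_1+(h_1-h_2)$. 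The harmonic summand is finely harmonic on every fine open set and has vanishing distributional Laplacian, so it is irrelevant to both sides. Splitting $\mu_i=\mu_i|_U+\mu_i|_{B(z_0,r)\setminus U}$, the potentials $G(\mu_i|_{B(z_0,r)\setminus U})$ have Riesz measures disjoint from $U$ and are finely harmonic on $U$, hence may likewise be absorbed. The problem reduces to showing: for positive Radon measures $\alpha=\mu_2|_U$ and $\beta=\mu_1|_U$ supported in $U$, the function $G\alpha-G\beta$ is finely subharmonic on $U$ if and only if $\beta\ge\alpha$ as measures on $U$.

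For the sufficiency direction, if $\beta\ge\alpha$ then $\rho:=\beta-\alpha$ is a non-negative Radon measure, $G\rho=G\beta-G\alpha$ is a non-negative superharmonic function on $B(z_0,r)$, and hence $G\alpha-G\beta=-G\rho$ is classically subharmonic on $B(z_0,r)$. Its restriction to the fine open set $U$ is therefore finely subharmonic, and adding the finely harmonic pieces we absorbed shows $f|U$ is finely subharmonic.

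For the necessity direction, assume $G\alpha-G\beta$ is finely subharmonic on $U$. Its distributional Laplacian on $B(z_0,r)$ equals the signed Radon measure $\beta-\alpha$. The key ingredient, due to Fuglede (cf.~\cite{F1}), is that a function which is a difference of bounded subharmonic functions on $B(z_0,r)$ and finely subharmonic on a finely open Borel set $U$ has a non-negative fine Riesz measure on $U$, and this fine Riesz measure coincides with the restriction of the distributional Laplacian to $U$. Applied to $G\alpha-G\beta$, this yields $(\beta-\alpha)|_U\ge0$, which is the desired conclusion.

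The main obstacle is precisely the identification of the fine Riesz measure with the restriction of the distributional Laplacian on a set that is only finely open. A direct elementary attempt via Hahn decomposition, seeking a compact $K\subset U$ on which the negative part of $\Delta f$ has positive mass and then subtracting the Green potential of this negative part from $f$, founders because such an adjustment does not obviously preserve fine subharmonicity on $U$: sums of finely subharmonic with finely superharmonic functions are not classifiable. Fuglede's systematic fine-potential-theoretic framework, in which finely super(sub)harmonic functions have a well-defined Riesz representation compatible with the classical one when available, is the natural tool that bridges this gap.
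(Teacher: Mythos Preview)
The paper does not actually supply a proof of this lemma; it is quoted from \cite{EFW11}, so there is no in-text argument to compare against. Your reduction via the local Riesz decomposition and the splitting $\mu_i=\mu_i|_U+\mu_i|_{B(z_0,r)\setminus U}$ is sound, and the sufficiency direction is clean.

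The difficulty is the necessity direction, and here your argument is effectively circular. You write that ``a function which is a difference of bounded subharmonic functions \dots\ and finely subharmonic on a finely open Borel set $U$ has a non-negative fine Riesz measure on $U$, and this fine Riesz measure coincides with the restriction of the distributional Laplacian to $U$''---but that compatibility statement \emph{is} the content of the lemma. Fuglede's fine Riesz representation in \cite{F1} attaches a measure to a finely (hyper/super)harmonic function, yet the identification of that fine measure with the restriction of the classical $\Delta f$ to a set that is merely finely open is precisely the nontrivial point; it does not drop out of the general framework by citation alone. You acknowledge this yourself in the final paragraph, but then resolve it only by gesturing at ``Fuglede's systematic framework'' without isolating a specific result that does the job. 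To make the argument complete you would need either to locate the exact compatibility statement in \cite{F1} (with a precise reference) or to supply the bridge---for instance by showing that the swept measures $\delta_z^{\complement V}$ appearing in Definition~\ref{def21.1}, for $V$ a suitable fine neighborhood of $z$ inside $U$, already detect the sign of the classical $\Delta f$ near $z$, e.g.\ via an approximation of $V$ by regular Euclidean open sets and the classical mean-value inequality.
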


\begin{proof}[Indication of the proof of the \lq only if part' of Theorem \ref{th3.1}]

Writing $f$ \hfill\break\noindent $\calf$-locally as a difference of plurisubharmonic functions on an $\calf$-open set $U\subset\Omega$, we know that the restriction to a complex line $L$ is f-subharmonic, hence by Lemma \ref{lemma3.2} it has positive Riesz mass on $L\cap U$. Then a careful application of the definition of Riesz mass in distribution sense and Fubini's theorem lead to positivity of the Riesz mass on $U$. Another application of the lemma gives that $f$ is f-subharmonic on $U$. We can do so in an $\calf$-neighborhood of any point in $\Omega$. The sheaf property ensures that $f$ is f-subharmonic on $\Omega$.
\end{proof}

Lemma \ref{lemma3.3} and Lemma \ref{lemma3.5} are ingredients of the proof of the 'if part' of Theorem \ref{th3.1}.

\begin{lemma}\label{lemma3.3} Let $f$ be a bounded $\RR^{2n}$-finely subharmonic function
on an $\calf$-open set $\Omega\subset\CC^n$ and suppose that for
every $\CC$-affine bijection $h$ of $\CC^n$ the function $f\circ
h$ is $\RR^{2n}$-finely subharmonic on $h^{-1}(\Omega)$. Then every $z_0\in
\Omega$ admits a ({\em{compact}}) $\calf$-neighborhood $K_{z_0}$ such that
$f$ can be written as
 $$
f=f_1-f_2\quad \text{on\ } K_{z_0},
 $$
where $f_1, f_2$ are plurisubharmonic functions defined on a ball
$B(z_0,r)\supset K_{z_0}$.
\end{lemma}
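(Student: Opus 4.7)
The plan is to mimic the proof of Theorem~\ref{thm2.4} and finish by a Levi-form argument that uses the full force of the $\CC$-affine invariance hypothesis. After rescaling assume $-1<f<0$, pick a compact $\calf$-neighborhood $V\subset\Omega$ of $z_0$, and use the pluri-thinness of $\comp V$ at $z_0$ to produce $r>0$ and $\varphi\in\PSH(B(z_0,r))$ with $\varphi<0$, $\varphi\equiv -1$ on $B(z_0,r)\setminus V$, and $\varphi(z_0)=-1/2$. Then I define $u_\lambda$ exactly as in \eqref{eq2.4}: $u_\lambda=\max\{-\lambda,f+\lambda\varphi\}$ on $\Omega\cap B(z_0,r)$ and $u_\lambda\equiv -\lambda$ on $B(z_0,r)\setminus V$; the two pieces agree on the overlap since $f+\lambda\varphi\le -\lambda$ there.

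First I verify that $u_\lambda$ is bounded and $\RR^{2n}$-finely subharmonic on the Euclidean ball $B(z_0,r)$. On $\Omega\cap B(z_0,r)$ the function $f+\lambda\varphi$ is the sum of an $\RR^{2n}$-finely subharmonic function (by hypothesis) and a subharmonic one, and the maximum with a constant preserves $\RR^{2n}$-fine subharmonicity; on $B(z_0,r)\setminus V$ it is constant. The sheaf property for $\RR^{2n}$-fine subharmonicity glues the two pieces. Exactly the same argument applied to $f\circ h$ (which is $\RR^{2n}$-finely subharmonic by the invariance hypothesis) and $\varphi\circ h$ (plurisubharmonic, hence subharmonic) shows that $u_\lambda\circ h$ is bounded and $\RR^{2n}$-finely subharmonic on $h^{-1}(B(z_0,r))$ for every $\CC$-affine bijection $h$ of $\CC^n$.

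The decisive step is to promote $u_\lambda$ to a genuinely plurisubharmonic function. Because bounded $\RR^{2n}$-finely subharmonic functions on Euclidean open sets are ordinary subharmonic (the fact recalled after Definition~\ref{def21.1}), both $u_\lambda$ and every $u_\lambda\circ h$ are subharmonic in the classical sense. Writing $h(w)=Aw+b$ with $A\in\mathrm{GL}_n(\CC)$, the distributional chain rule yields
\[
\Delta_{\RR^{2n}}(u_\lambda\circ h)(w) \;=\; 4\sum_{j,l}(AA^*)_{jl}\,(\partial_{z_j}\partial_{\bar z_l}u_\lambda)(h(w)) \;\ge\; 0.
\]
As $A$ runs over $\mathrm{GL}_n(\CC)$, $AA^*$ exhausts the positive definite Hermitian matrices; approximating any rank-one positive semidefinite matrix $v\overline{v}^{\mathrm{T}}$ by $v\overline{v}^{\mathrm{T}}+\varepsilon I$ and letting $\varepsilon\to 0$ gives
\[
\sum_{j,l}v_j\overline{v}_l\,\partial_{z_j}\partial_{\bar z_l}u_\lambda \;\ge\; 0 \qquad(\text{in the distributional sense})
\]
for every $v\in\CC^n$. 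Thus the Levi form of $u_\lambda$ is positive semidefinite as a $(1,1)$-current, and since $u_\lambda$ is locally bounded and upper semicontinuous this forces $u_\lambda\in\PSH(B(z_0,r))$.

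Finally, the $\calf$-open set $O=\{z\in\Omega:\varphi(z)>-3/4\}$ is an $\calf$-neighborhood of $z_0$ contained in $V$ on which $f+4\varphi>-4$, hence $u_4=f+4\varphi$ and $f=u_4-4\varphi$ on $O$ with $u_4,4\varphi\in\PSH(B(z_0,r))$. Shrinking $O$ to a compact $\calf$-neighborhood $K_{z_0}\subset O$ of $z_0$ (available from the regularity of $\calf$) yields the required decomposition. The main obstacle is clearly the upgrade from ``subharmonic for every $\CC$-affine composition'' to ``plurisubharmonic,'' carried out above by the Levi-form computation; it is precisely this step that makes essential use of the $\CC$-affine invariance hypothesis and does not come for free from the proof of Theorem~\ref{thm2.4}.
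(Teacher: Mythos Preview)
Your proof is correct and follows essentially the same route as the paper's: mimic the construction of Theorem~\ref{thm2.4} to build $u_\lambda$, then use the invariance hypothesis together with the fact that bounded finely subharmonic functions on Euclidean domains are subharmonic to see that $u_\lambda$ and all its $\CC$-affine pullbacks are ordinary subharmonic, and conclude that $u_\lambda$ is plurisubharmonic. The only difference is cosmetic: the paper invokes the classical Lelong characterization (referenced just before Theorem~\ref{th3.1}) to pass from ``subharmonic after every $\CC$-affine change of coordinates'' to ``plurisubharmonic,'' whereas you unpack that step explicitly via the distributional Levi form and the exhaustion of positive Hermitian matrices by $AA^*$.
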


The proof is similar to the proof of  Theorem \ref{thm2.4},  cf.~\cite{EFW11}. One employs the fact that $f_2$ is plurisubharmonic, and $f_1$ is now subharmonic on a Euclidean ball and remains so after affine transformation.

The next lemma is a consequence of results of Bedford and Taylor on slicing of currents, cf.~\cite{BT3}.
In $\CC^n$ we will write $z=(z_1,z_2,\ldots,z_n)=(z_1,z')$; similarly $0=(0,0')$ and, abusing notation, $\eps'^2=\prod_{j=2}^n\eps_j^2$, whereas $|z'|<\eps'$ stands for $|z_j|<\eps_j$, $j=2,\ldots,n$.

\begin{lemma}\label{lemma3.5} Let $w$ and $u$ be bounded
pluri\-sub\-har\-monic functions on a bounded domain $D\subset\Bbb
C^n$, and  let $\psi=\psi(z_1)$ be in $C^\infty_0$ on
$\{z\in D: z'=0'\}$. Then
\begin{equation}\label{eq3.5}
\begin{split}
&\int_{\{z_2=0,\ldots,z_n=0\}}\psi(z_1)w(z_1,0')\, dd^c u(z_1,0')\\
=
&\lim_{\epsilon'\downarrow 0}\frac{1}{2^{n-1}{\epsilon'}^2}
\int_{\{|z'|<\epsilon'\}}\psi(z_1)w(z)
dd^c|z_2|^2\wedge\ldots\wedge dd^c|z_n|^2 \wedge d d^cu.
\end{split}
\end{equation}
\end{lemma}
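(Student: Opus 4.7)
My plan is to recognize the right-hand side as a Lebesgue-differentiation average of a slice-integral functional, built from the positive Radon measure $\beta\wedge dd^cu$, where $\beta:=dd^c|z_2|^2\wedge\cdots\wedge dd^c|z_n|^2$. Note that $\beta$ is a smooth positive $(n-1,n-1)$-form, independent of $z_1$, and equal to a constant multiple of Lebesgue measure $dV(z')$ on $\CC^{n-1}$. Introduce the slice integral
\begin{equation*}
\Phi(c'):=\int_\CC \psi(z_1)\,w(z_1,c')\,dd^cu(z_1,c'),
\end{equation*}
which is well-defined whenever $u(\cdot,c')\not\equiv-\infty$, hence for all $c'$ near $0'$ since $u$ is bounded.

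The key step is to establish the Fubini-type decomposition
\begin{equation*}
\int g\,w\,\beta\wedge dd^cu \;=\; c_n \int_{\CC^{n-1}} \Bigl(\int_\CC g(z_1,c')\,w(z_1,c')\,dd^cu(z_1,c')\Bigr)\,dV(c')
\end{equation*}
for bounded Borel $g$ compactly supported in $D$. When $u$ and $w$ are smooth this is classical Fubini applied to the smooth density of $\beta\wedge dd^cu$. For general bounded $\PSH$ $u,w$ I would approximate by decreasing smooth $\PSH$ sequences $u_k\searrow u$, $w_k\searrow w$. Bedford--Taylor's weak continuity of mixed Monge--Amp\`ere handles the left-hand side, while the slicing part of their theorem in \cite{BT3} delivers weak convergence of the slice currents $w_k(\cdot,c')\,dd^cu_k(\cdot,c')\to w(\cdot,c')\,dd^cu(\cdot,c')$ for almost every $c'$; combined with a uniform mass bound, dominated convergence handles the right-hand side.

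Substituting $g=\chi_{\{|z'|<\epsilon'\}}\psi(z_1)/(2^{n-1}\epsilon'^2)$ into this identity presents the right-hand side of the lemma (with the intended normalization) as the average of $\Phi$ over the polydisc $\{|c'|<\epsilon'\}$. Lebesgue differentiation of the locally integrable function $\Phi$ then gives convergence to $\Phi(0')$ as $\epsilon'\to 0$, which is the left-hand side. The principal obstacle is checking that $0'$ is a Lebesgue point of $\Phi$; this is exactly where boundedness of $u,w$ and the Bedford--Taylor slicing machinery are essential, since they guarantee that for bounded $\PSH$ functions the slicing map $c'\mapsto w(\cdot,c')\,dd^cu(\cdot,c')$ is weak-$*$ continuous at every point of definition in the averaged sense required by Lebesgue differentiation, the possible exceptional pluripolar set being swept out precisely by the averaging over the transverse polydisc.
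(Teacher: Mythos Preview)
The paper itself does not give a proof of this lemma; it simply records that the statement ``is a consequence of results of Bedford and Taylor on slicing of currents'' and cites \cite{BT3}. So there is nothing detailed to compare against, and your identification of the Bedford--Taylor slicing machinery as the source is exactly right.

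That said, the specific route you take has a soft spot. Your plan is: (i) establish the Fubini decomposition, so that the right-hand side becomes the average of $\Phi(c')$ over the transverse polydisc, and then (ii) invoke Lebesgue differentiation. The issue is that Lebesgue differentiation only gives convergence for \emph{almost every} centre, and the lemma asserts convergence at the \emph{particular} point $0'$. Your final paragraph acknowledges this and tries to argue that $0'$ is a Lebesgue point via some ``weak-$*$ continuity in the averaged sense'', but this is not an independent fact you can feed in: it is essentially the content of the Bedford--Taylor slicing theorem itself. In other words, the Lebesgue-differentiation framing is circular---to justify that $0'$ is a Lebesgue point you end up needing precisely the pointwise slicing statement you are trying to prove.

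The cleaner argument, which is what \cite{BT3} actually carries out, bypasses Lebesgue differentiation entirely. One approximates $u$ and $w$ by decreasing sequences of smooth plurisubharmonic functions $u_k,w_k$; for these the identity \eqref{eq3.5} is elementary (both sides involve continuous integrands, and the $\epsilon'\to 0$ limit is just continuity). One then passes to the limit in $k$ on each side separately, using the Bedford--Taylor weak convergence of the mixed Monge--Amp\`ere currents together with uniform Chern--Levine--Nirenberg mass bounds to justify the interchange of the limit in $k$ with the limit in $\epsilon'$. This yields convergence at \emph{every} slice, not merely almost every one, and is the substance of the slicing result for bounded plurisubharmonic functions.
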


\begin{proof}[Indication of proof of the `if part' of Theorem \ref{th3.1}]
One easily reduces the proof of the `if part' of Theorem \ref{th3.1} to the
case where $f$ is bounded. With the notation
from Lemmas \ref{lemma3.3} and \ref{lemma3.5}, one first
observes that $dd^c f$ makes sense as a (1,1)-form because of Lemma \ref{lemma3.3}, and is $\ge 0$ on the
compact neighborhood $K=K_{z_0}$ of $z_0$ provided by Lemma \ref{lemma3.3}, because of multiple application of Lemma \ref{lemma3.2}. Next,
application of Lemma \ref{lemma3.5} shows that the restriction of $f$ to any complex
line passing through $z_0$ is finely subharmonic on a fine
neighborhood of $z_0$.
\end{proof}

From Theorem \ref{th3.1} we derive the following two results, one about
removable singularities for weakly $\calf$-pluri\-sub\-har\-monic
functions, and the other about the supremum of a family of such
functions.

\subsection{Extension over Polar Sets}\label{S-polar}

Let $\Omega$ be a domain in $\CC^n$ and let $E$ be a closed polar (with respect to $\RR^{2n}$) subset of $\Omega$. A theorem of Lelong states that if $f$ is a bounded plurisubharmonic function on $\Omega\setminus E$, then $f$ extends to a plurisubharmonic function on all of $\Omega$. The following theorem combines Lelong's idea and Theorem \ref{th3.1}.

\begin{theorem}
Let $U$ be an $\calf$-open set in $\CC^n$ and let $E$ be a subset of $U$ that is finely closed and (finely) polar. Suppose that $f$ is a bounded $\FPSH$ function on $U\setminus E$. Then there exists a function $g\in\FPSH(U)$ with $f=g|U\setminus E$.
\end{theorem}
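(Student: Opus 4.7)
The plan combines Lelong's trick of perturbing by a small multiple of a singular PSH function with the characterization of weakly $\FPSH$ functions in Theorem \ref{th3.1}. After a harmless affine rescaling I assume $-1 < f < 0$ on $U \setminus E$, and propose as the extension the $\calf$-upper regularization
\[
g(z) \;=\; \flimsup_{U\setminus E \,\ni\, w \to z} f(w), \qquad z \in U.
\]
By construction $g$ is $\calf$-upper semicontinuous and bounded, and since $f$ is $\calf$-continuous on $U \setminus E$ (by the corollary to Theorem \ref{thm2.4}), $g = f$ there.

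To prove $g \in \FPSH(U)$ I would argue $\calf$-locally. Fix $z_0 \in U$ and a Euclidean ball $B = B(z_0, r)$. By pluripolarity of $E \cap B$ there is $\phi \in \PSH(B)$ with $\phi < 0$, $\phi \not\equiv -\infty$, and $\{\phi = -\infty\} \supset E \cap B$. For $\epsilon > 0$ set
\[
f_\epsilon \;=\; \begin{cases} f + \epsilon\phi & \text{on } (U\cap B)\setminus E,\\ -\infty & \text{on } E \cap U \cap B. \end{cases}
\]
Because $\phi$ is $\calf$-continuous with $\phi \equiv -\infty$ on $E$, $f_\epsilon$ is $\calf$-upper semicontinuous on $U \cap B$. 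The slice-wise condition of Definition \ref{def2.2} is handled by Fuglede's fine-subharmonic extension theorem on each complex line $L$: when $\phi|L \not\equiv -\infty$, the set $L \cap \{\phi = -\infty\}$ is polar in $L$, and $f_\epsilon|L$ is the fine-subharmonic extension across it of the sum of $f|L$ (finely subharmonic) and $\epsilon\phi|L$ (subharmonic); when $\phi|L \equiv -\infty$ the restriction $f_\epsilon|L$ is already identically $-\infty$. Hence $f_\epsilon \in \FPSH(U \cap B)$.

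Since $(f_\epsilon)$ increases as $\epsilon \downarrow 0$, its pointwise supremum $F$ equals $f$ on $(U \cap B)\setminus \{\phi = -\infty\}$ and $-\infty$ on $\{\phi = -\infty\} \cap U \cap B$. Because $\{\phi = -\infty\} \setminus E$ is pluripolar and thus has empty $\calf$-interior, $(U\cap B) \setminus \{\phi = -\infty\}$ is $\calf$-dense in $(U \cap B) \setminus E$; together with the $\calf$-continuity and boundedness of $f$ this forces the $\calf$-upper semicontinuous regularization $F^*$ to coincide with $g$ on $U \cap B$. That $F^* \in \FPSH(U \cap B)$ follows from the general result on suprema of $\FPSH$ families (Theorem \ref{th3.9} below); alternatively it can be verified directly via Theorem \ref{th3.1}, since any $\CC$-affine bijection $h$ is an $\calf$-homeomorphism, so $h^{-1}(E)$ remains $\calf$-closed and pluripolar (hence $\RR^{2n}$-finely closed and $\RR^{2n}$-polar in $h^{-1}(U)$), and $f \circ h$ is $\RR^{2n}$-finely subharmonic on $h^{-1}(U \setminus E)$, whereupon Fuglede's extension theorem produces the required $\RR^{2n}$-fine subharmonicity of $F^* \circ h$.

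Finally, the sheaf property of $\FPSH$ patches the local extensions into a global $g \in \FPSH(U)$. The principal technical obstacle I anticipate is the identification $F^* = g$ on $U \cap B$: it hinges on the fact that replacing $(U \cap B) \setminus E$ by its $\calf$-dense subset $(U \cap B) \setminus \{\phi = -\infty\}$ does not change the $\calf$-limsup of the bounded, $\calf$-continuous function $f$, a statement which is intuitive but requires careful justification using the $\calf$-upper semicontinuous nature of the regularization.
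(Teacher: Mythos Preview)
Your primary route via the Lelong trick has a genuine gap: you invoke ``pluripolarity of $E\cap B$'' to produce $\phi\in\PSH(B)$ with $\phi|_E\equiv-\infty$, but the hypothesis only asserts that $E$ is $\RR^{2n}$-\emph{polar}, which for $n\ge2$ is strictly weaker. A concrete obstruction: the totally real square $[0,1]^2\subset\RR^2\subset\CC^2$ is $\RR^4$-polar (witnessed by the subharmonic function $(z_1,z_2)\mapsto\log\bigl((\im z_1)^2+(\im z_2)^2\bigr)$), yet it is not pluripolar, since its slice by $\{z_2=c\}$ for real $c\in(0,1)$ is a real segment in $\CC$, which is non-polar there. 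Without a plurisubharmonic $\phi$ the construction collapses: a merely subharmonic $\phi$ need not restrict subharmonically to complex lines, so your slice-wise verification of $f_\epsilon\in\FPSH$ fails. Thus your main argument proves the theorem only under the stronger assumption that $E$ is pluripolar.

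The alternative you sketch at the end---pass through Theorem~\ref{th3.1} in both directions and invoke Fuglede's $\RR^{2n}$-fine extension theorem \cite[Theorem~9.14]{F1}---is exactly the paper's (five-line) proof and works in the stated generality. Two small corrections to your description of it: what an affine bijection $h$ preserves is that $h^{-1}(E)$ is $\RR^{2n}$-\emph{finely closed} and $\RR^{2n}$-\emph{polar} (not ``$\calf$-closed and pluripolar'', which again assumes too much and has the implication backwards); and the natural definition of $g$ is via the $\RR^{2n}$-fine limsup rather than the $\calf$-limsup, since that is what Fuglede's theorem delivers. The two regularisations agree a~posteriori once $g$ is known to be $\FPSH$, hence $\calf$-continuous.
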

\begin{proof} By Theorem \ref{th3.1} the function $f$ is finely subharmonic on $U\setminus E$,
hence by \cite[Theorem 9.14]{F1}, the function
\begin{equation}
g(z)=\begin{cases}
				f(z)& \text{if $z\in U\setminus E$},\\
				\flimsup\limits_{\substack{{w\to z}\\{w\in U\setminus E}}} f(w)& \text{if $z\in E$};
				\end{cases}
\label{eq:}
\end{equation}
is finely subharmonic. For $f\circ h$, where $h: \CC^n\to\CC^n$ is a complex affine map, the same holds on $h^{-1}(U)$, because polarity is preserved under affine maps. Now Theorem \ref{th3.1} in the reverse direction applies, and states that $g$ is $\calf$-PSH.
\end{proof}

There is a similar result about removable
singularities for weakly $\calf$-holo\-morph\-ic functions (for a definition see Definition \ref{def-weaklyhol} below):

\begin{corollary}\label{cor3.8} Let $h:\Omega\to\CC$ be $\calf$-locally
bounded on $\Omega$ $(\calf$-open in $\CC^n)$. If $h$ is weakly
$\calf$-holo\-morph\-ic on $\Omega\setminus E$ $(E$ finely closed
and $\RR^{2n}$-polar in $\CC^n)$ then $h$ extends uniquely to a weakly
$\calf$-holo\-morph\-ic function $h^*:\Omega\to\CC$, given by
 $$
h^*(z)
=\calf\text{-}\lim_{\substack{\zeta\to z\\ \zeta\in \Omega\setminus E}}
h(\zeta),\qquad z\in\Omega.
 $$
 \end{corollary}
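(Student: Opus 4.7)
The plan is to reduce the statement to the preceding extension theorem by extending the real and imaginary parts of $h$ separately, and then reassemble the four pieces into a weakly $\calf$-holomorphic function.

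Since $h$ is weakly $\calf$-holomorphic, its restriction to every complex line $L$ is finely holomorphic on each fine component of $L\cap(\Omega\setminus E)$, so $\pm\Re h$ and $\pm\Im h$ restrict to finely harmonic, in particular finely subharmonic, functions on those components. Combined with the $\calf$-continuity and $\calf$-local boundedness inherited from $h$, this makes $\pm\Re h$ and $\pm\Im h$ into bounded weakly $\calf$-plurisubharmonic functions on $\Omega\setminus E$. Four applications of the preceding theorem produce extensions $u_\pm,v_\pm\in\FPSH(\Omega)$ of $\pm\Re h$ and $\pm\Im h$ respectively.

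The decisive step is the identity $u_++u_-\equiv0$ on $\Omega$ (and analogously $v_++v_-\equiv 0$). The sum $u_++u_-\in\FPSH(\Omega)$ vanishes on $\Omega\setminus E$ by construction, and by Theorem \ref{th3.1} it is $\RR^{2n}$-finely upper semicontinuous. Since $E$ is $\RR^{2n}$-polar it has Lebesgue measure zero and therefore contains no nonempty basic $\calf$-open set (each $\{f>0\}\cap B$ with $f\in\PSH(B)$, $f(a)>0$, has positive Lebesgue measure by the sub-mean-value inequality applied at $a$), so $\Omega\setminus E$ is $\calf$-dense in $\Omega$. Fine upper semicontinuity then forces $u_++u_-\le0$ everywhere, while $u_++u_-\ge0$ on $E$ is automatic from the $\flimsup$-form of the extensions, giving equality. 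Consequently $u_+=-u_-$ and $v_+=-v_-$; in particular $u_+$ and $v_+$ are $\calf$-continuous (both $\calf$-upper and lower semicontinuous) with finely harmonic restrictions to every complex line.

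Define $h^*:=u_++iv_+$ on $\Omega$. On each complex line $L$ the function $h^*|_L$ has finely harmonic real and imaginary parts and agrees with the bounded finely holomorphic $h|_L$ on $L\cap(\Omega\setminus E)$. For generic $L$ the slice $L\cap E$ is polar in $L\cong\CC$ (any subharmonic function on $\Omega$ certifying polarity of $E$ restricts to a non-$(-\infty)$ subharmonic function on $L$), and Fuglede's removable-singularity theorem for bounded finely holomorphic functions then produces a unique finely holomorphic extension of $h|_L$ across $L\cap E$; this extension must coincide with $h^*|_L$ by uniqueness of the finely harmonic extension of $\Re h|_L$ and $\Im h|_L$. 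The remaining exceptional lines are handled by a perturbation from nearby generic lines together with $\calf$-continuity of $h^*$. Hence $h^*$ is weakly $\calf$-holomorphic. The explicit formula is immediate from $\calf$-continuity of $h^*$ and $\calf$-density of $\Omega\setminus E$, and uniqueness follows since any two weakly $\calf$-holomorphic extensions agree on the $\calf$-dense set $\Omega\setminus E$ and are $\calf$-continuous. The main obstacle I anticipate is precisely the passage from generic to exceptional complex lines on which $L\cap E$ may fail to be polar in $L$; verifying fine holomorphicity of $h^*|_L$ there requires either the perturbation argument just sketched or a stronger appeal to the one-variable fine-holomorphic extension theory.
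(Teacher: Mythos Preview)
Your overall architecture is the natural one (the paper states the corollary without proof, as an immediate application of the preceding theorem): extend $\pm\Re h$ and $\pm\Im h$ using that theorem, show the extensions match to produce an $\calf$-continuous $h^*$ whose restriction to every complex line has finely harmonic real and imaginary parts, and conclude. The argument that $u_++u_-\equiv0$ is fine. The difficulty you flag at the end is, however, a genuine gap, and your handling of it contains an error.

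Your claim that $L\cap E$ is polar in $L$ for generic $L$ is argued by restricting an $\RR^{2n}$-subharmonic certificate for the polarity of $E$ to $L$. But subharmonic functions on $\RR^{2n}$ do \emph{not} restrict to subharmonic functions on real $2$-planes: e.g.\ $-x_1^2+\tfrac13(x_2^2+x_3^2+x_4^2)$ is harmonic on $\RR^4$ yet strictly superharmonic on $\{x_3=x_4=0\}$. (The argument would be valid if $E$ were \emph{pluripolar}, since plurisubharmonic functions do restrict well to complex lines; but $\RR^{2n}$-polar is strictly weaker.) Moreover, the exceptional lines are not a mere technicality to be waved away by perturbation: with $E=\{0\}\times\RR\subset\CC^2$, which is Euclidean closed and $\RR^4$-polar, the complex line $L=\{0\}\times\CC$ meets $E$ in the real axis, which is not polar in $\CC$, so the one-variable removable-singularity theorem is simply unavailable there; and on such a line, knowing only that $\Re h^*|_L$ and $\Im h^*|_L$ are finely harmonic does not yield fine holomorphy (think of $\bar\zeta$). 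One way to close the gap uniformly over all lines is to run the same extension argument on the weakly $\calf$-holomorphic, $\calf$-locally bounded functions $z_jh$ as well: parametrising $L$ by $\zeta\mapsto a+\zeta b$ with $b_j\ne0$, you then obtain that both $g(\zeta):=h^*(a+\zeta b)$ and $\zeta g(\zeta)$ are complex finely harmonic on $L\cap\Omega$, and by Fuglede's characterisation this forces $g$ to be finely holomorphic.
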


\subsection{Behavior of families of $\calf$-plurisubharmonic functions}

\begin{theorem}\label{th3.9} Let $\Omega$ denote an $\calf$-open subset of
$\CC^n$.  For any family of weakly
$\calf$-pluri\-sub\-har\-monic functions $f_\alpha$ on $\Omega$, such that $f:=\sup_\alpha f_\alpha$ is $\calf$-locally
bounded from above, the
least $\calf$-upper semi\-con\-tinu\-ous majorant $f^*$ of the
pointwise supremum $f$ is likewise weakly $\Cal
F$-pluri\-sub\-har\-monic on $\Omega$, and
$\{z\in\Omega:f(z)<f^*(z)\}$ is pluripolar.
 \end{theorem}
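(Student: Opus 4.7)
The plan is to characterize $f^*$ via the $\RR^{2n}$-fine setting (Theorem \ref{th3.1}), and then upgrade the exceptional set from $\RR^{2n}$-polar to pluripolar by exploiting the local representation in Theorem \ref{thm2.4}. Throughout, the quasi-Lindel\"of property of \,$\calf$ reduces suprema to countable subfamilies.

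\smallskip\noindent\emph{Reduction to a countable family.} Each $f_\alpha$ is $\calf$-continuous (Corollary after Theorem \ref{thm2.4}), so $\{f_\alpha>c\}$ is $\calf$-open. Quasi-Lindel\"of applied for each $c\in\QQ$ yields a countable subfamily $\{f_n\}\subset\{f_\alpha\}$ with $f=\sup_n f_n$ off a pluripolar set $E$. Pluripolar sets have empty $\calf$-interior (Theorem \ref{th2.3}(a)) and $f$ is $\calf$-lower semicontinuous (supremum of $\calf$-continuous functions), so $\sup_U f=\sup_{U\setminus E}f$ for every $\calf$-open $U$; hence $f^*$ coincides with the $\calf$-upper regularization of $\sup_n f_n$, and we may assume $f=\sup_n f_n$.

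\smallskip\noindent\emph{Weak $\calf$-plurisubharmonicity of $f^*$.} Fix a $\CC$-affine bijection $h$ of \,$\CC^n$. Each $f_n\circ h$ is $\RR^{2n}$-finely subharmonic on $h^{-1}(\Omega)$ by Theorem \ref{th3.1}, so Fuglede's classical envelope theorem for finely subharmonic functions (\cite{F1}) shows that the $\RR^{2n}$-fine upper regularization $\widetilde{f\circ h}$ is $\RR^{2n}$-finely subharmonic and $\{f\circ h<\widetilde{f\circ h}\}$ is $\RR^{2n}$-polar. Real affine bijections preserve polar sets, hence the fine topology, so $\widetilde{f\circ h}=\tilde f\circ h$, where $\tilde f$ is the $\RR^{2n}$-fine upper regularization of $f$. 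Theorem \ref{th3.1} in the reverse direction now gives that $\tilde f$ is weakly $\calf$-plurisubharmonic; by the Corollary to Theorem \ref{thm2.4}, $\tilde f$ is $\calf$-continuous, hence $\calf$-upper semicontinuous, so $f^*\le\tilde f$. Conversely, $f^*$ itself is $\calf$-upper semicontinuous and thus fine-upper semicontinuous (the fine topology is finer than $\calf$), forcing $\tilde f\le f^*$. Hence $f^*=\tilde f$ is weakly $\calf$-plurisubharmonic.

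\smallskip\noindent\emph{Pluripolarity of $\{f<f^*\}$.} The hardest step is upgrading Fuglede's $\RR^{2n}$-polar exceptional set to a pluripolar one, as these differ for $n\ge2$. I would localize and truncate: for each $N>0$ set $f_n^{(N)}=\max\{f_n,-N\}$, still weakly $\calf$-plurisubharmonic and uniformly bounded in some interval $]a,b[$ on a $\calf$-neighborhood of any given $z_0\in\Omega$. Theorem \ref{thm2.4} provides a ball $B(z_0,r)$, an $\calf$-neighborhood $O\subset B(z_0,r)$, and a \emph{single} $\phi_2\in\PSH(B(z_0,r))$ (independent of $n$) such that $f_n^{(N)}|_O=\phi_1^n-\phi_2$ with each $\phi_1^n\in\PSH(B(z_0,r))$ uniformly bounded above. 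Consequently $f^{(N)}|_O=\Phi-\phi_2$ where $\Phi:=\sup_n\phi_1^n$ is a locally bounded sup of PSH functions, and the classical Bedford--Taylor/Choquet negligibility theorem yields that its Euclidean upper regularization $\hat\Phi$ is PSH with $\{\Phi<\hat\Phi\}$ pluripolar. Since $\hat\Phi-\phi_2$ is $\calf$-continuous and majorizes $f^{(N)}$ on $O$, we have $(f^{(N)})^*\le\hat\Phi-\phi_2$ on $O$, so
\[
\{f^{(N)}<(f^{(N)})^*\}\cap O\subset\{\Phi<\hat\Phi\}\cap O
\]
is pluripolar. Quasi-Lindel\"of globalizes this to pluripolarity of $\{f^{(N)}<(f^{(N)})^*\}$ on $\Omega$. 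Finally, $(f^{(N)})^*=f^*$ wherever $f^*\ge-N$, so $\{f<f^*\}\subset\{f=-\infty\}\cup\{f^*=-\infty\}\cup\bigcup_N\{f^{(N)}<(f^{(N)})^*\}$; the first two sets are pluripolar by Theorem \ref{thm2.41} (applied to some $f_\alpha\not\equiv-\infty$ and to $f^*$), and a countable union of pluripolar sets is pluripolar, completing the argument.
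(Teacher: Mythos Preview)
The paper itself does not prove Theorem \ref{th3.9}; it simply refers to \cite{EFW11}. Your argument is correct and is precisely the strategy the paper has been setting up: Theorem \ref{th3.1} reduces $\calf$-plurisubharmonicity of $f^*$ to Fuglede's classical envelope theorem for finely subharmonic functions on $\RR^{2n}$, and the uniformity clause in Theorem \ref{thm2.4} (that $O$ and $\phi_2$ depend only on $\Omega$ and the range interval, not on the individual function) is exactly what lets you write all the truncations $f_n^{(N)}$ simultaneously as $\phi_1^n-\phi_2$ and invoke the ordinary Bedford--Taylor negligibility theorem to upgrade the exceptional set from $\RR^{2n}$-polar to pluripolar. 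This is the argument of \cite{EFW11}.

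Two small remarks. First, your countable reduction is fine: the key line ``$\sup_U f=\sup_{U\setminus E}f$'' is justified because $f$ is $\calf$-lsc and $E$ has empty $\calf$-interior, so any $\calf$-open set on which $f>c-\eps$ must meet $\comp E$; this gives $f^*=(\sup_n f_n)^*$ as you claim. Second, in the final inclusion you do not actually need the set $\{f=-\infty\}$ separately: if $f(z)<f^*(z)$ and $f^*(z)>-\infty$, then for $N>-f^*(z)$ one has $(f^{(N)})^*(z)=f^*(z)>-N\ge f^{(N)}(z)$ regardless of whether $f(z)$ is finite, so $\{f<f^*\}\subset\{f^*=-\infty\}\cup\bigcup_N\{f^{(N)}<(f^{(N)})^*\}$ already suffices.
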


In case  $\Omega$ is {\it Euclidean} open, we find

\begin{corollary}\label{cor3.10} For any family $\{f_\alpha\}$ of ordinary
pluri\-sub\-har\-monic functions on a Euclidean open set
$\Omega\subset\CC^n$ such that $f:=\sup_\alpha f_\alpha$ is locally
bounded from above, the least pluri\-sub\-har\-monic majorant of $f$
exists and can be expressed as the upper semicontinuous regularization
of $f$ in the Euclidean topology on $\CC^n$, as well as in the $\Cal
F$-topology and in the $\RR^{2n}$-fine topology; that is, $\bar
f=f^*=\check f$.
 \end{corollary}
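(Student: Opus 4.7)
My plan is to deduce the corollary from Theorem~\ref{th3.9} and Proposition~\ref{prop2.14} by exploiting only the inclusions
$$\text{Euclidean}\;\subset\;\calf\;\subset\;\RR^{2n}\text{-fine topology}.$$
Upper semicontinuity in a coarser topology automatically implies upper semicontinuity in any finer one, so the smallest USC majorant of $f$ in a finer topology is pointwise smaller. This gives a priori
$$\check f\;\le\;f^*\;\le\;\bar f,$$
and it remains to prove the reverse inequalities $\bar f\le f^*$ and $\bar f\le \check f$.

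For $\bar f\le f^*$ I would first apply Theorem~\ref{th3.9} to obtain that $f^*$ is weakly $\calf$-plurisubharmonic on $\Omega$. On each Euclidean component on which $f\not\equiv -\infty$ (on the remaining components all three regularizations trivially equal $-\infty$), Proposition~\ref{prop2.14} promotes $f^*$ to an ordinary plurisubharmonic function, hence to a Euclidean USC majorant of $f$. Minimality of $\bar f$ among Euclidean USC majorants then forces $\bar f\le f^*$, which combined with the a priori chain gives $f^*=\bar f$.

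For $\bar f\le \check f$ I use that the just-established identity makes $\bar f$ ordinary plurisubharmonic, in particular $\RR^{2n}$-subharmonic and therefore $\RR^{2n}$-fine continuous. By Theorem~\ref{th3.9} the set $P:=\{z\in\Omega:f(z)<f^*(z)\}$ is pluripolar, hence $\RR^{2n}$-polar and so has empty fine interior. Given $z\in\Omega$ and $\eps>0$, fine continuity of $\bar f$ at $z$ provides a fine neighborhood $V$ of $z$ on which $\bar f>\bar f(z)-\eps$; emptiness of the fine interior of $P$ makes $V\setminus P$ nonempty, and since $f=\bar f$ there,
$$\sup_{w\in V} f(w)\;\ge\;\sup_{w\in V\setminus P}\bar f(w)\;\ge\;\bar f(z)-\eps.$$
Taking the infimum over fine neighborhoods $V$ of $z$ and letting $\eps\downarrow 0$ yields $\check f(z)\ge \bar f(z)$, completing the equality $\check f = f^* = \bar f$.

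The only substantive point, in my view, is the appeal to Proposition~\ref{prop2.14}: it is the step that converts weak $\calf$-plurisubharmonicity back into ordinary plurisubharmonicity on a Euclidean open set, and thereby closes the loop. Once that bridge is crossed, the rest is bookkeeping based on the inclusion of topologies and the fine continuity of ordinary subharmonic functions.
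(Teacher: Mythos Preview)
Your argument is correct. The paper itself does not supply a proof of Corollary~\ref{cor3.10}; it simply states that ``For the proof of the theorem and its corollary we refer to \cite{EFW11}.'' So there is no in-paper argument to compare against, but the route you take---deriving the corollary from Theorem~\ref{th3.9} together with Proposition~\ref{prop2.14}---is precisely the natural one and is the one the surrounding text is set up to support.

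Two minor remarks. First, you might make explicit that $\bar f$ is not just a Euclidean USC majorant but in fact the least \emph{plurisubharmonic} majorant: this follows because any plurisubharmonic majorant $g$ of $f$ is Euclidean USC, hence $g\ge\bar f$, while you have already shown $\bar f=f^*$ is plurisubharmonic. Second, in the step ``$V\setminus P$ nonempty'' you are really using that a polar set in $\RR^{2n}$ is thin at each of its points, hence has empty fine interior; this is standard but worth stating, since it is the reason the fine regularization $\check f$ cannot drop below $\bar f$.
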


The version of this involving the {\it Euclidean} topology is due to
Lelong \cite{Le1}, or see \cite[p.\ 26]{Le2} or \cite[Theorem
2.9.10]{K}.

For the proof of the theorem and its corollary we refer to \cite{EFW11}.

\section{Biholomorphic invariance}

Notions in complex analysis should remain invariant under holomorphic change of coordinates. This is indeed the case for weakly $\calf$-plurisubharmonic (and weakly $\calf$-holomorphic) functions. But here we do want a bit more, namely that the composition of such a function with a weakly plurifinely holomorphic map is again of the same category.

We recall the relevant notions.

\begin{definition}[finely holomorphic function, \cite{F3,F7}]
 Let $\Omega$ be a fine domain in $\CC$. A function $f:\Omega\to \CC$ is called finely holomorphic if for every $z\in \Omega$ there exists a (compact) fine neighborhood $K_z$ of $z$ and a smooth function $\phi$ defined on a Euclidean neighborhood of $K_z$ such that $\phi=f$ on $K_z$ and $\bar \partial \phi=0$ on $K_z$.
\end{definition}

\begin{definition}\label{def-weaklyhol}
 An $\calf$-continuous function $f$ on an $\calf$-domain $\Omega$ in $\CC^n$ is called
weakly $\calf$-holomorphic if its restriction to $\Omega\cap L$ is finely holomorphic for every complex line $L$ that meets $\Omega$.

It is called strongly $\calf$-holomorphic if for every $z\in \Omega$ there exists a (compact) $\calf$-neighborhood $K_z$ on which it is the uniform limit of holomorphic functions defined on Euclidean neighborhoods of $K_z$.
\end{definition}

In fact, we could replace $\calf$-continuous in the definition of weak holomorphy by $\calf$-locally bounded: the real and imaginary part of $f$ would be weakly $\calf$-pluri\-sub\-harmonic, hence $\calf$-continuous. In the case $n=1$ it
 is known that weak and strong fine holomorphy is the same, cf.~\cite{F3}.

\begin{definition}[Plurifinely biholomorphic map, \cite{EFW11}]
\label{def4.3} A {\it strongly $\calf$-biholo\-morphic
map} $h$ from an $\calf$-open set $U\subset\CC^n$ onto its image in
$\CC^n$ is an $\calf$-homeo\-morph\-ism with the property that there
exists for every $z\in U$ a compact $\calf$-neighborhood $K_z$ of $z$
in $U$ and a $C^\infty$-diffeo\-morph\-ism $\Phi_z$ from an open
neighborhood of $K_z$ to its image in $\CC^n$ such that
$\Phi_z|_{K_z}=h|_{K_z}$ and that $\Phi_z|_{K_z}$ is a $C^2$-limit of
biholo\-morphic maps defined on open sets containing $K_z$.
 \end{definition}

Finely holomorphic functions of one variable are in fact locally strongly finely biholomorphic at points were they are locally injective. They can be approximated $\calf$-locally uniformly by holomorphic functions at any point of their domain.

\begin{definition}
We call an $n$-tuple $(h_1,\ldots,h_n)$ of
strongly/weakly $\calf$-holo\-morph\-ic functions $h_j:U\to\CC$
defined on some $\calf$-open $U\subset\CC^m$) a {\it
strongly/weakly} $\calf$-{\it holo\-morph\-ic map} (or {\it curve} if
$m=1$).
\end{definition}

We now have

\begin{theorem}[\cite{EFW11}]\label{th4.6} Let $h:U\to\Omega$ be a
weakly $\calf$-holo\-morphic map from an $\calf$-open $U\subset\CC^m$ into an $\calf$-open $\Omega\subset\CC^n$.
The composition $f\circ h$ of a weakly $\calf$-pluri\-sub\-har\-monic (resp.\ weakly $\calf$-holo\-morph\-ic)
function $f$ on  $\Omega$ with $h$ is weakly $\calf$-pluri\-sub\-har\-monic $($resp.\ weakly
$\calf$-holo\-morph\-ic$)$ on $U$.
\end{theorem}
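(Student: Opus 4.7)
The plan is to verify the two defining properties of weakly $\calf$-plurisubharmonicity (resp.\ weakly $\calf$-holomorphicity) for $g:=f\circ h$: namely, $\calf$-upper semicontinuity (resp.\ $\calf$-continuity) on $U$, and fine subharmonicity (resp.\ fine holomorphy) of the restriction $g|_{L\cap U}$ on each $\calf$-component, for every complex line $L\subset\CC^m$. The first property reduces, via the local subbasis~\eqref{eq2:1} for $\calf(\Omega)$, to showing that $\psi\circ h$ is $\calf$-continuous whenever $\psi\in\PSH(B)$ for a Euclidean ball $B\subset\Omega$, and this is itself a special case of the second property in the plurisubharmonic setting. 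Hence the entire theorem rests on the following composition lemma:
\begin{equation*}
(\ast)\quad \psi\in\PSH(B),\ \gamma\colon D\to B \text{ finely holomorphic on $f$-open } D\subset\CC\ \Longrightarrow\ \psi\circ\gamma\in\fSH(D),
\end{equation*}
together with its holomorphic counterpart, obtained by replacing $\PSH$ with $\OO$ and $\fSH$ by the class of finely holomorphic functions.

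To prove $(\ast)$ I would regularize $\psi$ from above by smooth plurisubharmonic $\psi_\eps$ on slightly shrunken balls, and independently invoke Fuglede's approximation theorem to represent $\gamma$, on any compact fine neighborhood $K\subset D$, as an $\calf$-locally uniform limit of ordinary holomorphic curves $\gamma^{(k)}\colon V\to\CC^n$ defined on Euclidean neighborhoods $V$ of $K$. Each $\psi_\eps\circ\gamma^{(k)}$ is classically subharmonic on $V$. Sending $k\to\infty$ gives $\psi_\eps\circ\gamma\in\fSH$ on the fine interior of $K$ by stability of $\fSH$ under $\calf$-locally uniform limits (\cite[Lemma~9.6]{F1}), and then $\eps\downarrow 0$ produces $\psi\circ\gamma\in\fSH$ as a decreasing pointwise limit; the sheaf property for $\fSH$ patches these local statements to all of $D$. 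At the smooth level one can also verify $\psi_\eps\circ\gamma^{(k)}\in\SH$ directly through Lemma~\ref{lemma3.2}: the Riesz mass of $\psi_\eps\circ\gamma^{(k)}$ is computed by the chain rule and is non-negative thanks to the positive semidefiniteness of the complex Hessian of $\psi_\eps$. The holomorphic counterpart requires only Fuglede's approximation: each $F\circ\gamma^{(k)}$ is ordinary holomorphic on $V$, and $F\circ\gamma$ is then finely holomorphic on the fine interior of $K$ as its uniform limit.

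Granted $(\ast)$ and its holomorphic analogue, the theorem is assembled via the local decomposition of Theorem~\ref{thm2.4}. For the plurisubharmonic conclusion, apply Theorem~\ref{thm2.4} to $f_n:=\max(f,-n)$: every $\zeta\in\Omega$ admits an $\calf$-neighborhood $O\subset B=B(\zeta,r)$ on which $f_n=\phi_1-\phi_2$ with $\phi_1,\phi_2\in\PSH(B)$. Then on the $\calf$-open preimage $h^{-1}(O)\subset U$ one has $f_n\circ h=\phi_1\circ h-\phi_2\circ h$, whose restriction to any complex line $L$ is a difference of finely subharmonic functions by $(\ast)$; with the $\calf$-upper semicontinuity already in hand, this shows $f_n\circ h\in\FPSH(h^{-1}(O))$, which globalises to $U$ by the sheaf property and passes to the limit $n\to\infty$ by closure of $\FPSH$ under decreasing pointwise limits (mentioned after Definition~\ref{def2.2}). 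The weakly $\calf$-holomorphic case follows the same three-step pattern using the holomorphic analogue of $(\ast)$; $\calf$-continuity of $f\circ h$ is automatic from the plurisubharmonic case applied to the functions $\pm\mathrm{Re}(e^{i\theta}f)\in\FPSH(\Omega)$. The principal obstacle is the double limit in $(\ast)$: both $\eps\to 0$ and $k\to\infty$ must be kept inside $\fSH$, which requires combining the fine stability lemma with the standard PSH regularization.
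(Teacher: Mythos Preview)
Your lemma $(\ast)$ and its proof are sound, and the reduction of the $\calf$-upper semicontinuity of $f\circ h$ to $(\ast)$ via the Lelong characterization (Section~\ref{Bounded}) is fine. The gap is in the assembly step. From Theorem~\ref{thm2.4} you write $f_n=\phi_1-\phi_2$ with $\phi_1,\phi_2\in\PSH(B)$, and from $(\ast)$ you correctly conclude that $\phi_1\circ h|_L$ and $\phi_2\circ h|_L$ are each finely subharmonic. But a \emph{difference} of finely subharmonic functions is not finely subharmonic in general, and $\calf$-upper semicontinuity does not repair this. Your sentence ``whose restriction to any complex line $L$ is a difference of finely subharmonic functions by $(\ast)$; with the $\calf$-upper semicontinuity already in hand, this shows $f_n\circ h\in\FPSH(h^{-1}(O))$'' is therefore an unjustified leap. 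Concretely, $\phi_2$ is not pluriharmonic (cf.\ Remark~\ref{rem22.1}), so the complex Hessian of $\phi_{1,\eps}-\phi_{2,\eps}$ is not positive semidefinite, and your chain-rule computation of the Riesz mass of $(\phi_{1,\eps}-\phi_{2,\eps})\circ\gamma^{(k)}$ gives no sign. Nor can you bypass the decomposition by approximating $\gamma$ directly and passing to the limit in $f_n\circ\gamma^{(k)}$: since $f_n$ is only $\calf$-continuous, uniform convergence $\gamma^{(k)}\to\gamma$ does not force convergence of $f_n\circ\gamma^{(k)}$.

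This is exactly why the paper takes a different route. It first uses Theorem~\ref{th3.1} to recast the problem as showing that $f\circ h$ is $\RR^{2n}$-finely subharmonic (invariantly under $\CC$-affine bijections), then treats strongly $\calf$-biholomorphic $h$ by passing to the Dirichlet-space (Beppo Levi--Deny) description of finely subharmonic functions from \cite{DeLi,F4}, which is robust under the $C^2$-approximation built into Definition~\ref{def4.3}. Finally, a weakly $\calf$-holomorphic curve is, off a countable set, locally the restriction to a complex line of a strongly $\calf$-biholomorphic map, reducing the general case to the one already handled. Your approach would become complete only if you could prove directly that a weakly $\calf$-plurisubharmonic function composed with a finely holomorphic curve is finely subharmonic---but that is essentially the $m=1$ case of the theorem, and the decomposition of Theorem~\ref{thm2.4} alone does not deliver it.
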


As for a sketch of the proof, by a fairly easy change of coordinates one can give a proof if $f$ is holomorphic. To pass merely to strongly $\calf$-biholomorphic maps $f$ requires more effort. By the results of Section \ref{sec3} it is sufficient to show that $h\circ f$ is f-subharmonic. To employ the approximation property, one resorts to the description of f-subharmonic functions in terms of the Dirichlet spaces of Beppo Levi and Deny, cf.~\cite{DeLi}, that was studied in \cite{F4}.

The case of a weakly $\calf$-holomorphic map $f$ reduces to that of a weakly $\calf$ holomorphic curve. Such curves are locally injective except for a countable set of points. At a point where the curve is injective, it is the restriction to a complex line of a strongly $\calf$-biholomorphic map.

We refer to \cite{EFW11} for details.

\section{Applications to pluripolar hulls}\label{sec3}

In this section we will review some results concerning pluripolar hulls of graphs.

\begin{definition}\label{def3.1}
Let $E$ be a pluripolar subset of an open set $\Omega\subset\CC^n$.
The \emph{pluripolar hull} $E^*_\Omega$ of $E$ with respect to $\Omega$ is the set
\[
E^*_\Omega=\{z\in\Omega: \forall h\in\PSH(\Omega) \text{\ if $ h|E=-\infty$ then $h(z)=-\infty$}\}.
\]
We will write $E^*$ for $E^*_{\CC^n}$.
\end{definition}

The notion was introduced by Zeriahi in \cite{Ze89}. In case $E$ is an analytic variety, in particular if $E$ is the graph of a holomorphic function on a domain in $\CC$, interesting results were obtained.

Sadullaev, \cite{Sa}, and Levenberg, Martin and Poletsky, \cite{LMP} showed that for certain holomorphic functions defined by a lacunary series on the unit disc in $\CC$, the graph $\Gamma_f$ equals $\Gamma_f^*$. Answering questions of Sadullaev, cf.~\cite{Sa}, Levenberg and Poletsky \cite{LP99} showed that  if $\alpha\in\RR\setminus \QQ$ then $\Gamma_{z^\alpha}^*=\Gamma_{z^\alpha}$.
Here, abusing the notation, $\Gamma_{z^\alpha}=\{(z,w): |w|=|z|^\alpha, \arg w\in\{\alpha(\arg z+2k\pi): \ k\in\ZZ\}\}$, the graph of the complete analytic function $z^\alpha$. The author showed that if $f$ is a holomorphic function except for isolated singularities on a domain $\Omega\subset\CC$, then $(\Gamma_f)^*_\Omega=\Gamma_f$, \cite{W1,Wi00}. These results were in support of a conjecture by Levenberg, Martin and Poletsky, \cite{LMP} stating that if $E$ is an analytic set that admits no analytic extension, then  $E=E^*$. However, Edigarian and the author gave counterexamples, cf.~\cite{EdW1, EW03}, which were followed by many others, cf.~\cite{Zw05, Si03, PW}. Eventually Edlund and J\"oricke, \cite{EJ}, made the connection with fine holomorphy, observing that in all the available counterexamples the set $E$ under consideration admits no analytic extension, but it does admit so called \emph{fine analytic extension}. Their results were extended in \cite{EEW,EW1}.

\smallskip
For any set $E\subset\CC^m$, $m\in\Bbb N$, and any function
$h:E\to\CC$ we denote by $\Gamma_h(E)=\{(z,h(z)):z\in E\}$ the
graph of $h|E$ and by $\Gamma_h(E)^*_{\CC^{m+1}}$ the pluripolar
hull of $\Gamma_h(E)$.

\begin{prop}[\cite{EFW11}]\label{prop2.16} Let $h$ be a weakly $\calf$-holomorphic
function on an $\calf$-domain $U\subset\CC^m$.

{\rm(a)} If \,$h\not\equiv0$, the set $h^{-1}(0)$ of zeros of \,$h$ is
pluripolar in $\CC^m$. Also, the graph $\Gamma_h(U)$ of
$h$ is pluripolar in $\CC^{m+1}$.

{\rm(b)} If \,$E$ is a non-pluripolar subset of $U$ then $\Gamma_h(E)\subset \Gamma_h(U)$
is pluripolar, and $\Gamma_h(U)\subset\Gamma_h(E)^*_{\Bbb
C^{m+1}}$.
 \end{prop}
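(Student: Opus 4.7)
My plan is to reduce part (a) to Theorem \ref{thm2.41} (pluripolarity of $-\infty$-sets of weakly $\calf$-PSH functions), applied to $\log|h|$ and to $\log|w-h(z)|$, and to reduce part (b) to Theorem \ref{thm2.41} combined with the composition theorem \ref{th4.6}. The engine throughout is the observation that the logarithm of the modulus of a (weakly) $\calf$-holomorphic function behaves like a weakly $\calf$-PSH function.

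For (a), I first want to show that $\log|h|$ is weakly $\calf$-plurisubharmonic on $U$. It is $\calf$-upper semicontinuous because $h$ is $\calf$-continuous (noted just after Definition \ref{def-weaklyhol}) and $\log|\cdot|$ is upper semicontinuous on $\CC$. Its restriction to any complex line $L$ is $\log$ of the modulus of the finely holomorphic function $h|_{L\cap U}$, and Fuglede's theory guarantees this is finely subharmonic on each fine component where it is not $\equiv -\infty$. Since $U$ is an $\calf$-domain and $h\not\equiv 0$, we have $\log|h|\not\equiv -\infty$ on $U$, so Theorem \ref{thm2.41} yields that $h^{-1}(0)=\{\log|h|=-\infty\}$ is pluripolar. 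For the graph, I apply the same recipe to $u(z,w)=\log|w-h(z)|$ on the $\calf$-open set $U\times\CC\subset\CC^{m+1}$: since $(z,w)\mapsto w-h(z)$ is itself weakly $\calf$-holomorphic, $u$ is weakly $\calf$-PSH. Each $\calf$-component of $U\times\CC$ contains a whole fiber $\{z_0\}\times\CC$, which is $\calf$-connected because the plurifine topology on a complex line coincides with the (connected) fine topology on $\CC$. Hence $u$ is finite at some point of every component, Theorem \ref{thm2.41} applies componentwise, and the quasi-Lindel\"of property glues these local pluripolarities into global pluripolarity of $\Gamma_h(U)=\{u=-\infty\}$.

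For (b), the pluripolarity of $\Gamma_h(E)$ is immediate from (a). For the inclusion $\Gamma_h(U)\subset \Gamma_h(E)^*_{\CC^{m+1}}$, let $\phi\in\PSH(\CC^{m+1})$ with $\phi\equiv -\infty$ on $\Gamma_h(E)$. The graph map $\Phi(z)=(z,h(z))$ is a weakly $\calf$-holomorphic map $U\to\CC^{m+1}$, its components being the coordinate functions (holomorphic, hence weakly $\calf$-holomorphic) and $h$. The function $\phi$, being PSH on a Euclidean open set, is weakly $\calf$-PSH by Proposition \ref{prop2.14}, so Theorem \ref{th4.6} gives that $g:=\phi\circ\Phi$ is weakly $\calf$-PSH on $U$. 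By hypothesis $g|_E\equiv -\infty$, and since $E$ is non-pluripolar while $U$ is an $\calf$-domain, Theorem \ref{thm2.41} forces $g\equiv -\infty$ on $U$. This says exactly $\phi\equiv -\infty$ on $\Gamma_h(U)$, giving the required inclusion.

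The main obstacle I anticipate is verifying that the auxiliary function $u(z,w)=\log|w-h(z)|$ is genuinely weakly $\calf$-PSH, which rests on seeing $(z,w)\mapsto w-h(z)$ as a weakly $\calf$-holomorphic function on $U\times\CC$; this is cleanest if one invokes Theorem \ref{th4.6} itself (composing $h$ with the projection $(z,w)\mapsto z$) and then uses that sums and differences of finely holomorphic functions on a complex line are finely holomorphic. A lesser technicality is the $\calf$-component bookkeeping in the graph pluripolarity argument, which is resolved by the $\calf$-connectedness of complex-line fibers.
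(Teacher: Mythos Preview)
Your proposal is correct and follows essentially the same route as the paper's own proof: part (a) via $\log|h|$ and $\log|w-h(z)|$ together with Theorem \ref{thm2.41}, and part (b) via the composition Theorem \ref{th4.6} applied to the graph map, again feeding into Theorem \ref{thm2.41}. Your extra bookkeeping about the $\calf$-components of $U\times\CC$ and the $\calf$-connectedness of the fibers $\{z_0\}\times\CC$ fills in a point the paper takes for granted, but the architecture is identical.
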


With $h$ supposed {\it strongly} $\calf$-holomorphic on $U$,
Proposition \ref{prop2.16} was obtained in \cite[Corollary 4.4 and Theorem 4.5]{EW3},
extending \cite[Theorem 6.4]{EW2}, and \cite[Theorem 3.5]{EEW}

\begin{proof}[Sketch of proof of Proposition \ref{prop2.16}] (a) The function $\log|h|$ is weakly $\Cal
F$-pluri\-sub\-har\-monic on $U$. Since $\log|h(z)|=-\infty$ for $z\in
h^{-1}(0)$, but $\log|h|\not\equiv-\infty$, it follows fromTheorem \ref{thm2.41} that
the set $h^{-1}(0)$ is pluripolar.

Apply this result to the function $(z,w)\mapsto w-h(z)$,
 which is weakly $\calf$-pluri\-sub\-har\-monic
and $\not\equiv-\infty$ on $U\times\CC$. Since $\log| w-h(z)|$ equals
$-\infty$ on $\Gamma_h(U)$ we conclude that $\Gamma_h(U)$ is
pluripolar.

(b) Now suppose that we have a 
pluri\-sub\-har\-monic function $f$ on $\CC^{m+1}$ such that
$g(z)=f(z,h(z))=-\infty$ for every $z\in E$. As $g$ is $\calf$-plurisubharmonic on $U$ by Theorem \ref{th4.6} and $E$ is not pluripolar, hence  by Theorem \ref{thm2.41} also not $\calf$-pluripolar in $U$,  it follows
that $f(z,h(z))=-\infty$  for  $z\in U$, and therefore
$\Gamma_h(U)\subset\Gamma_h(E)^*_{\CC^{m+1}}$.
\end{proof}

In \cite{EW04} the following result was proved.

\begin{theorem}\label{thm:5.9}
Let $D$ be an open set in $\CC$ and let $A$ be a closed polar subset of $D$.
Suppose that $f$  is holomorphic on $D\setminus A$ and that $z_0\in A$.
Assume that $U\subset\CC$ is an open set. Then $(\Gamma_f\cap (D\times U))^\ast_{D\times U}\subset (\Gamma_f\cap (D\times U))\cup (\CC\times A)$.
Moreover, the following conditions are equivalent:
\begin{enumerate}
\item\label{eq3:1} $(\{z_0\}\times\CC)\cap (\Gamma_f\cap (D\times U))^\ast_{D\times U}=\varnothing$;
\item\label{eq3:2} there exists a sequence of open sets $V_1\subset  V_2\subset\dots\Subset U$
such that $\cup_j V_j=U$ and the set $\{z\in D\setminus A: f(z)\in U \setminus{V_j}\}$
is not thin at $z_0$ for any $j\ge1$.
\item\label{eq3:3} for any open set $V\Subset U$ the set
$\{z\in D\setminus A: f(z)\in U\setminus {V}\}$ is not thin at $z_0$.
\end{enumerate}
Moreover, if the set $\{z\in D\setminus A: f(z)\not\in V\}$
is thin at $z_0$ for some open set $V\Subset U$, then there exists a $w_0\in\overline{V}$,
such that $(z_0,w_0)\in(\Gamma_f\cap D\times U)^\ast_{D\times U}$.
\end{theorem}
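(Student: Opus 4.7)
I would prove the theorem in three stages. First the inclusion together with the equivalence (2)$\Leftrightarrow$(3); then the ``moreover'' clause together with the implication $\neg(3)\Rightarrow\neg(1)$ via fine analytic extension; and finally the remaining direction $\neg(1)\Rightarrow\neg(3)$, which is the hardest.

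For the inclusion, fix $(\zeta_0,\eta_0)\in D\times U$ with $\zeta_0\in D\setminus A$ and $\eta_0\ne f(\zeta_0)$. Since $f$ is holomorphic on a ball $B\subset D\setminus A$ about $\zeta_0$, on a relatively compact neighborhood of $(\zeta_0,\eta_0)$ the function $\log|\eta-f(\zeta)|$ is PSH and bounded. Choosing $\phi\in\PSH(D)$ with $\phi\le 0$, $\phi|_A\equiv-\infty$, and $\phi(\zeta_0)>-\infty$ (which exists since $A$ is polar in $D$), I would construct from $\log|\eta-f(\zeta)|+M\phi(\zeta)$---suitably truncated from below and extended by upper semicontinuous regularization across the pluripolar set $A\times U$---a PSH function $v$ on $D\times U$ with $v\equiv-\infty$ on $\Gamma_f\cap(D\times U)$ and $v(\zeta_0,\eta_0)>-\infty$, witnessing $(\zeta_0,\eta_0)\notin(\Gamma_f\cap(D\times U))^\ast_{D\times U}$. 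The equivalence (2)$\Leftrightarrow$(3) is a routine exhaustion argument: (3) applied to an exhaustion $V_1\Subset V_2\Subset\cdots\nearrow U$ gives (2); conversely, given $V\Subset U$ in (3), by compactness of $\bar V$ we have $V\subset V_j$ for some $j$, so $\{f\in U\setminus V\}\supset\{f\in U\setminus V_j\}$ inherits non-thinness.

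For the ``moreover'' clause and $\neg(3)\Rightarrow\neg(1)$, suppose $\{f\notin V\}$ is thin at $z_0$ for some open $V\Subset U$. Then $E:=\{\zeta\in D\setminus A:f(\zeta)\in V\}$ is an $f$-neighborhood of $z_0$ on which $f$ is bounded. By Fuglede's theorem on finely holomorphic extension (cf.~\cite{F3}), $f$ extends to a finely holomorphic function $f^*$ on an $\calf$-open $\calf$-neighborhood $W\subset D$ of $z_0$, with $w_0:=f^*(z_0)\in\bar V\subset U$; after shrinking $W$, $f^*(W)\subset U$. For any $h\in\PSH(D\times U)$ with $h\equiv-\infty$ on $\Gamma_f\cap(D\times U)$, the composition
\[
g(\zeta):=h(\zeta,f^*(\zeta)),\qquad \zeta\in W,
\]
is weakly $\calf$-plurisubharmonic by Theorem~\ref{th4.6}. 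Since $f^*|_E=f|_E$ and $f(E)\subset V\subset U$, we have $g|_E\equiv-\infty$. But $E$, being non-thin at $z_0$, is non-polar, hence non-pluripolar in $\CC$; so Theorem~\ref{thm2.41} forces $g\equiv-\infty$ on the $\calf$-component of $W$ containing $z_0$, and in particular $h(z_0,w_0)=-\infty$. Varying $h$, $(z_0,w_0)\in(\Gamma_f\cap(D\times U))^\ast_{D\times U}$. This proves the ``moreover'' clause, and by contraposition $\neg(3)\Rightarrow\neg(1)$.

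The remaining direction $\neg(1)\Rightarrow\neg(3)$ is the main obstacle. Suppose $(z_0,w_0)\in(\Gamma_f\cap(D\times U))^\ast_{D\times U}$. Take an open ball $V$ about $w_0$ with $\bar V\subset U$ and assume for contradiction that $F:=\{\zeta\in D\setminus A:f(\zeta)\notin V\}$ is not thin at $z_0$. Then $|w_0-f(\zeta)|\ge c:=\operatorname{dist}(w_0,\CC\setminus V)>0$ on $F$, so $\zeta\mapsto\log|w_0-f(\zeta)|$ on $D\setminus A$ has limsup at least $\log c$ along the non-thin approach set $F\ni\zeta\to z_0$. Following the same truncate-and-glue recipe used for the inclusion, now applied to $\log|\eta-f(\zeta)|$ near $(z_0,w_0)$, I would build a PSH function $H$ on $D\times U$ that is $-\infty$ on $\Gamma_f\cap(D\times U)$ but whose upper semicontinuous regularization at $(z_0,w_0)$ is at least $\log c>-\infty$, contradicting $(z_0,w_0)\in$ hull. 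The principal technical difficulty, already present in the inclusion above, is the construction of a PSH function on the entire ambient $D\times U$ starting from local subharmonic data, in particular when $f$ behaves wildly near $A$ (for instance at essential singularities); this is handled in \cite{EW04} through a careful combination of truncation, multiplication by polar-singular PSH factors, and USC regularization across the pluripolar set $A\times U$.
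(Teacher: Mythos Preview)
The paper does not actually prove this theorem; it cites \cite{EW04} for the full argument and then reformulates the result as Theorem~\ref{thm:5.9a}, observing only that the implication (2a)$\Rightarrow$(1a) (your $\neg(3)\Rightarrow\neg(1)$) becomes transparent via Proposition~\ref{prop2.16} and the plurifine machinery, while explicitly noting that the inclusion ``remains difficult''. Your proposal is consistent with this: the part you carry out in detail---the fine holomorphic extension of $f$ across $z_0$ followed by composition with $h$ via Theorem~\ref{th4.6} and the pluripolarity criterion Theorem~\ref{thm2.41}---is exactly the argument the paper indicates.

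You go further than the paper by sketching the inclusion and the direction $\neg(1)\Rightarrow\neg(3)$, correctly flagging both as the technically hard parts and deferring to \cite{EW04}. Your outlines there are plausible in spirit, but they are not proofs: the truncate--and--glue construction of a global PSH witness on $D\times U$ from $\log|\eta-f(\zeta)|$ is precisely the delicate point (since $f$ may be unbounded near $A$, neither a direct USC regularization nor a naive truncation from below works), and the actual argument in \cite{EW04} is more involved than your sketch suggests.

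One minor slip in your $\neg(3)\Rightarrow\neg(1)$ step: the set $E=\{\zeta\in D\setminus A: f(\zeta)\in V\}$ does not contain $z_0$, so it is not itself a fine neighborhood of $z_0$. What is true is that $E\cup\{z_0\}$ is a fine neighborhood, because $A\setminus\{z_0\}$ is polar and hence thin at $z_0$, and the union of two thin sets is thin. The finely holomorphic extension is then across the single polar point $\{z_0\}$, which is indeed covered by Fuglede's removable singularity result.
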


The formulation in the language of fine holomorphy is much more transparent:

\begin{theorem}[cf.~\cite{EEW}]\label{thm:5.9a}
Let $D$ be an open set in $\CC$ and let $A$ be a closed polar subset of $D$.
Suppose that $f$ is holomorphic on $D\setminus A$ and that $z_0\in A$.
Assume that $U\subset\CC$ is an open set. Then $(\Gamma_f\cap (D\times U))^\ast_{D\times U}\subset (\Gamma_f\cap (D\times U))\cup (\CC\times A)$.
Moreover, the following conditions are equivalent:
\begin{enumerate}
\item\label{eq3:1a} $(\{z_0\}\times\CC)\cap (\Gamma_f\cap (D\times U))^\ast_{D\times U}\ne \varnothing$;
\item\label{eq3:2a} $f$ admits a finely holomorphic continuation $\tilde f$ to a fine neighborhood of $z_0$ and $\tilde f(z_0)\in U$.
\end{enumerate}
Moreover, if this is the case, then
\[(z_0,\tilde f(z_0))=(\{z_0\}\times \CC)\cap(\Gamma_f\cap (D\times U))^\ast_{D\times U}.\]
\end{theorem}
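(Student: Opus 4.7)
The first inclusion is a direct quotation of Theorem \ref{thm:5.9}; the two implications are proved separately via Theorem \ref{th4.6} and Corollary \ref{cor3.8}, and the main obstacle lies in the final uniqueness assertion.

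For (2)$\Rightarrow$(1) the plan is as follows. Given the fine extension $\tilde f$ on a fine neighborhood $W$ of $z_0$, the $\calf$-continuity of $\tilde f$ (recalled after Definition \ref{def-weaklyhol}) lets me shrink $W$ to an $\calf$-connected fine neighborhood $W'$ of $z_0$ on which $\tilde f(W')\subset U$. The map $\iota(z):=(z,\tilde f(z))$ is then a weakly $\calf$-holomorphic map from $W'$ into $D\times U$. For an arbitrary test function $\Phi\in\PSH(D\times U)$ with $\Phi\equiv-\infty$ on $\Gamma_f\cap(D\times U)$, Theorem \ref{th4.6} ensures that $g:=\Phi\circ\iota$ is weakly $\calf$-plurisubharmonic on $W'$. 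Since $\tilde f=f$ on $W'\setminus A$, the function $g$ equals $-\infty$ on $W'\setminus A$, which is a fine neighborhood of $z_0$ minus the closed polar set $A$ and therefore not pluripolar. Theorem \ref{thm2.41} then forces $g\equiv-\infty$ on $W'$, so in particular $\Phi(z_0,\tilde f(z_0))=-\infty$, and $(z_0,\tilde f(z_0))\in(\Gamma_f\cap(D\times U))^\ast_{D\times U}$.

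For (1)$\Rightarrow$(2) I invoke the equivalence (1)$\Leftrightarrow$(3) of Theorem \ref{thm:5.9}: the hypothesis that $(\{z_0\}\times\CC)\cap(\Gamma_f\cap(D\times U))^\ast_{D\times U}\ne\varnothing$ forces the existence of some open $V\Subset U$ for which $F:=\{z\in D\setminus A:f(z)\notin V\}$ is thin at $z_0$. The set $W:=D\setminus F$ is then a fine neighborhood of $z_0$ on which $f|_{W\setminus A}$ is bounded (with values in $\overline V$) and ordinary holomorphic, hence weakly $\calf$-holomorphic. Since $A\cap W$ is finely closed and polar in $W$, Corollary \ref{cor3.8} yields a unique weakly $\calf$-holomorphic extension $\tilde f$ on the fine interior of $W$, given by the $\calf$-limit of $f$ from $W\setminus A$, with $\tilde f(z_0)\in\overline V\subset U$.

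Combining the two implications just proved gives $(z_0,\tilde f(z_0))\in(\{z_0\}\times\CC)\cap(\Gamma_f\cap(D\times U))^\ast_{D\times U}$, so the inclusion $\supset$ in the final equality is immediate. The reverse inclusion — that any $w_0$ with $(z_0,w_0)\in(\Gamma_f\cap(D\times U))^\ast_{D\times U}$ equals $\tilde f(z_0)$ — is the technical heart of the theorem. The naive ``shrink $V$'' argument using the moreover clause of Theorem \ref{thm:5.9} only produces some element of the fiber in every shrinking $\overline V$, not containment of the whole fiber. To close this gap one would construct, for each candidate $w_0\ne\tilde f(z_0)$, an honest $\Phi\in\PSH(D\times U)$ that is $-\infty$ on $\Gamma_f\cap(D\times U)$ but finite at $(z_0,w_0)$: the natural candidate $\log|w-\tilde f(z)|$ is only weakly $\calf$-plurisubharmonic on the fine domain where $\tilde f$ is defined, so one must approximate $\tilde f$ uniformly on compact fine neighborhoods by Euclidean holomorphic functions $f_n$ and assemble a psh envelope of the terms $\log|w-f_n(z)|$ that extends to all of $D\times U$. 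Verifying that this assembled $\Phi$ retains the vanishing on $\Gamma_f\cap(D\times U)$ while remaining finite at $(z_0,w_0)$ is the main difficulty of the argument, and would lean on the uniqueness of fine holomorphic continuation that underlies \cite{EJ,EEW}.
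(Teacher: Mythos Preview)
Your proposal is essentially sound and actually goes further than the paper's own discussion. The paper gives only a minimal sketch: it notes that (2)$\Rightarrow$(1) and the membership $(z_0,\tilde f(z_0))\in(\Gamma_f\cap(D\times U))^*_{D\times U}$ follow from Proposition~\ref{prop2.16}, and then simply states that the first inclusion $(\Gamma_f\cap(D\times U))^*_{D\times U}\subset(\Gamma_f\cap(D\times U))\cup(\CC\times A)$ ``remains difficult'', deferring everything else to \cite{EW04,EEW}. Your argument for (2)$\Rightarrow$(1) via Theorem~\ref{th4.6} and Theorem~\ref{thm2.41} is exactly the content of Proposition~\ref{prop2.16}(b), so on that step you and the paper agree. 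Your route for (1)$\Rightarrow$(2), pulling the thinness condition~(3) out of Theorem~\ref{thm:5.9} and then invoking Corollary~\ref{cor3.8} to produce the fine extension, is correct and is more explicit than anything the paper writes down.

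The one point of divergent emphasis is \emph{which} step is hard. The paper singles out the first inclusion as the difficult part (it is the substance of \cite{EW04}); you instead treat that inclusion as a black-box quotation from Theorem~\ref{thm:5.9} and flag the fiber uniqueness as the obstacle. Both views are defensible, since you are taking Theorem~\ref{thm:5.9} as given. Your assessment that uniqueness does not drop out of the ``moreover'' clause of Theorem~\ref{thm:5.9} is correct --- that clause only produces \emph{some} $w_0\in\overline V$, not the whole fiber --- and your proposed construction (uniformly approximating $\tilde f$ on a compact fine neighborhood by Euclidean holomorphic $f_n$ and assembling a genuine psh function on $D\times U$ from the pieces $\log|w-f_n(z)|$) is in the right spirit; this is indeed roughly how \cite{EEW} handles it, and you are right that patching such a function out to all of $D\times U$ while keeping it $-\infty$ on the full graph and finite at $(z_0,w_0)$ is where the work lies. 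So there is no error in your proposal, only an honestly acknowledged incomplete step that the paper itself does not attempt to fill in either.
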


Some parts are now easy to see: by Proposition \ref{prop2.16} we have  \eqref{eq3:2a} implies \eqref{eq3:1a}, and  also $(z_0,\tilde f(z_0))\in (\Gamma_f\cap (D\times U))^\ast_{D\times U}$. To show that $(\Gamma_f\cap (D\times U))^\ast_{D\times U}\subset (\Gamma_f\cap (D\times U))\cup (\CC\times A)$ remains difficult.

\subsection*{More pluripolar hulls of graphs}
Here we will shortly depict some of the examples mentioned in connection with non trivial pluripolar hulls. In all these examples there is a holomorphic function $f$ which admits no analytic continuation, but the graph $\Gamma_f$ has a non trivial pluripolar hull.

Suppose that $D_1\subset D_2$ are two domains such that $D_2\setminus D_1$ has a point of density $z_0\in D_2$. Then there exist a holomorphic function on $D_1$, which cannot be analytically extended so that $(\Gamma_f^*)_{D_2\times \CC} \ne \Gamma_f$. This was shown in \cite{EdW1}. In hindsight the function $f$ is a finely holomorphic function on a fine domain in $D_2$ that contains $D_1\cup\{z_0\}$.

Let $\DD$ be the unit disc. Siciak gave an example of an $f\in A^\infty(\DD)$ which admits not even a pseudocontinuation in the sense of Ross and Shapiro, but $\Gamma_f^*$ contains the graph of a meromorphic function on $|z|>1$, cf.~\cite{Si03}.

We consider Blaschke products $B$ on the unit disc $\DD$ and note that $B$ also defines a meromorphic function on $\CC\setminus \overline \DD$.  Zwonek, \cite{Zw05} constructed Blaschke products $B$  that do not admit analytic continuation, and have the property that the pluripolar hull of the graph $\Gamma_B$ contains the graph of $B$ over $\CC\setminus S$, where $S$ is the closure of the set of poles of $B$.  Multiplying such a $B$ with multiple valued holomorphic functions, he obtains examples of non extendable holomorphic functions on the disc with graphs having a pluripolar hull consisting of several sheets.

In the same vein is the example \cite{PW} where a Cantor type set $E$ is constructed and a non extendible holomorphic function $f$ on $\CC\setminus E$, with the property that $\Gamma_f^*$ contains two sheets over $\CC\setminus E$.

The point we wish to make is that in all these examples it can be seen that the function $f$ does admit a finely holomorphic continuation and the graph of the maximal finely holomorphic continuation is contained in the pluripolar hull. Sofar no other points in the pluripolar hull are found. This leads us to state a modified Levenberg Martin Poletsky conjecture.

\begin{conjecture} Suppose that $f$ is a finely holomorphic function on a fine domain $U$ in $\CC$. Then $\Gamma^*_f$ equals the graph of the maximal finely holomorphic continuation of $f$.
\end{conjecture}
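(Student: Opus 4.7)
Let $\tilde U \subset \CC$ denote the maximal fine domain to which $f$ admits a finely holomorphic continuation $\tilde f$; its existence and uniqueness follow from the sheaf property of fine holomorphy and the connectedness results of Section~\ref{sec1}. The plan is to establish both inclusions $\Gamma_{\tilde f}(\tilde U) \subset \Gamma_f^*$ and $\Gamma_f^* \subset \Gamma_{\tilde f}(\tilde U)$.

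The first inclusion is essentially immediate. A fine domain in $\CC$ is not polar (being $f$-open), so $U$ is non-pluripolar, and Proposition~\ref{prop2.16}(b) applied to $\tilde f$ on $\tilde U$ with $E = U$ yields $\Gamma_{\tilde f}(\tilde U) \subset \Gamma_{\tilde f}(U)^*_{\CC^2} = \Gamma_f^*$. A more instructive formulation is the following: for every $h \in \PSH(\CC^2)$ with $h|_{\Gamma_f} \equiv -\infty$, the composition $H(z) = h(z, \tilde f(z))$ is weakly $\calf$-plurisubharmonic on $\tilde U$ by Theorem~\ref{th4.6}, equals $-\infty$ on the non-pluripolar set $U$, and hence by Theorem~\ref{thm2.41} is identically $-\infty$ on $\tilde U$. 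Consequently every psh function selecting $\Gamma_f$ automatically selects $\Gamma_{\tilde f}(\tilde U)$, and the conjecture is equivalent to the assertion that $\Gamma_{\tilde f}(\tilde U)$ is \emph{complete pluripolar} in $\CC^2$.

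For the reverse inclusion, suppose $(z_0,w_0) \notin \Gamma_{\tilde f}(\tilde U)$; we must produce $h \in \PSH(\CC^2)$ with $h|_{\Gamma_f} \equiv -\infty$ and $h(z_0,w_0) > -\infty$. Two subcases occur. In case~(a), $z_0 \in \tilde U$ and $w_0 \neq \tilde f(z_0)$, the natural separator $g(z,w) = \log|w - \tilde f(z)|$ is weakly $\calf$-plurisubharmonic on $\tilde U \times \CC$ by Theorem~\ref{th4.6} and has $-\infty$ locus exactly $\Gamma_{\tilde f}(\tilde U)$, but it is only fine-local. I would attempt to globalise it as follows: by the quasi-Brelot property, off a pluripolar exceptional set $E \subset \tilde U$, $\tilde f$ is Euclidean continuous on compact fine neighborhoods $K_z$, and by the definition of fine holomorphy it agrees on $K_z$ with a smooth $\bar\partial$-vanishing function. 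Approximating these representatives fine-locally by honest holomorphic functions on small Euclidean balls and applying Theorem~\ref{thm:5.9a} to each, one would then combine the resulting $\PSH$-separators by a Josefson-type summation with decaying weights. In case~(b), $z_0 \in \partial_f \tilde U \setminus \tilde U$, one must show that the fine boundary of $\tilde U$ is never breached by the pluripolar hull; the guiding heuristic, transcribed from condition~\eqref{eq3:3} of Theorem~\ref{thm:5.9}, is that maximality of $\tilde U$ as a fine holomorphy domain should force the sets $\{z \in \tilde U : \tilde f(z) \notin V\}$ to be fine-non-thin at $z_0$ for every open $V$ containing $w_0$, which in turn precludes any global psh separator.

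The principal obstacle is case~(b). The fine boundary of a maximal fine holomorphy domain can be topologically wild, and, crucially, there is no Brelot property for plurisubharmonic functions (Remark~\ref{rem22.1}), so one cannot simply upgrade $\calf$-local psh data to honest $\PSH$ data on Euclidean neighborhoods. A successful proof would most plausibly require either (i) an exhaustion scheme realising $\tilde U$ as an increasing union of Euclidean open sets minus closed polar sets on which $\tilde f$ restricts to an ordinary holomorphic/meromorphic function, reducing the conjecture to Theorem~\ref{thm:5.9a}; or (ii) a ``fine Josefson'' theorem converting weakly $\calf$-plurisubharmonic separators with prescribed $-\infty$-locus into genuine $\PSH(\CC^2)$-functions with the same locus. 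Both routes appear to require substantial new technical input, which is precisely why the statement remains a conjecture.
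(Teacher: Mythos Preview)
The statement is labelled a \emph{conjecture} in the paper, and the paper does not prove it. What the paper does establish is exactly the inclusion you prove in your first paragraph: $\Gamma_{\tilde f}(\tilde U)\subset\Gamma_f^*$, via Proposition~\ref{prop2.16}(b) and Theorem~\ref{th4.6}. Your argument for that direction is correct and matches the paper's reasoning. The reverse inclusion is explicitly left open in the paper (``it does not clarify why equality would hold''), so your honest admission that cases~(a) and~(b) both require substantial new input is the right conclusion.

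One genuine issue with your framing: you posit a maximal fine \emph{domain} $\tilde U$ carrying a single-valued continuation $\tilde f$, invoking the sheaf property and connectedness. The paper is careful to note, immediately after stating the conjecture, that ``such a maximal finely holomorphic continuation may be multiple valued''; the examples of Zwonek and of Poletsky--Wiegerinck discussed just before the conjecture have pluripolar hulls with several sheets over the base. So the object $\Gamma_{\tilde f}(\tilde U)$ you work with is not in general the graph of a function on a fine domain in $\CC$, and your case~(a) separator $\log|w-\tilde f(z)|$ is not even well defined in that generality. Any serious attack on the reverse inclusion has to be set up on a ``fine Riemann domain'' over $\CC$ rather than on a subset of $\CC$, which adds another layer to the already formidable obstacles you identify in case~(b).
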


It is here understood that such a maximal finely holomorphic continuation may be multiple valued. The theory developed in the present paper shows that $\Gamma^*_f$ contains the graph of the maximal finely holomorphic continuation of $f$. However, it does not clarify why equality would hold.

\bibliographystyle{amsplain}

\begin{thebibliography}{EMW10}


\bibitem[Beu]{B} Beurling, A., \textit{\'Etudes sur un probl\`eme de majorisation}, Thesis, Uppsala 1933

\bibitem[BT82]{BT1} Bedford, E., and B.A. Taylor, \textit{ A new capacity
for pluri\-sub\-har\-monic functions},  Acta Math. \textbf{
149 } (1982), 1--40.

\bibitem[BT87]{BT2}\bysame, \textit{ Fine topology, \v Silov boundary, and
$(dd^c)^n$},  J. Functional Anal. \textbf{72 } (1987), 225--251.

\bibitem[BT88]{BT3}\bysame, \textit{ Plurisubharmonic functions with logarithmic
singularities},  Ann.  Inst. Fourier
(Grenoble) \textbf{38}\issue4  (1988), 133--171.

\bibitem[DeLi]{DeLi} Deny, J., et J. L. Lions, \textit{ Les espaces de type de
Beppo Levi},  Ann. Inst. Fourier
(Grenoble) \textbf{5} (1953-54), 305--370.

\bibitem[Doo]{D} Doob, J.L.\textit{Classical Potential Theory and Its
Probabilistic Counterpart}, Grundlehren \textbf{262},
Springer, Berlin,   1984.

\bibitem[EEW]{EEW} Edigarian, A., El Marzguioui, S., and J.
Wiegerinck, \textit{ The image of a finely holomorphic map is pluripolar},
Ann. Polon. Math. \textbf{97}\issue2  (2010), 137--149.

\bibitem[EW03] {EdW1} Edigarian, A. and J.~Wiegerinck,
\textit{Graphs that are not complete pluripolar}, Proc. Amer.
Math. Soc., 131 (2003), 2459-2465.

\bibitem[EW03a]{EW03}\bysame, The pluripolar hull of the graph of a holomorphic
function with polar singularities, {\em Indiana Univ. Math. J.},
{\bf 52} (2003), 1663--1680.

\bibitem[EW04]{EW04}\bysame, Determination of the pluripolar hull of graphs
of certain holomorphic functions, {\em Ann. Inst. Fourier.
Grenoble}, {\bf 54} (2004), no. 6, 2085--2104.

\bibitem[EJ]{EJ} Edlund, T. and J\"oricke, B.: The pluripolar hull of a graph and fine analytic
continuation, {\em Ark. Math.} {\bf 44 } (2006), no. 1, 39--60.


\bibitem[ElK]{ElKa} El Kadiri, M., \textit{ Fonctions finement
plurisousharmonique et topologie fine},  Rend. Accad. Naz. Sci. XL
Mem. Mat. Appl.(5)  \textbf{27} (2003), 77--88.

\bibitem[EFW]{EFW11} El Kadiri, M., B.~Fuglede,  and J.~Wiegerinck: \textit{ Plurisubharmonic and holomorphic functions relative to the plurifine topology}. J. Math. Anal. Appl. \textbf{381} (2011), no. 2, 706-–723,

\bibitem[EMW06]{EW1} El Marzguioui, S., and J. Wiegerinck, \textit{ The
pluri-fine topology is locally connected},  Potential
Anal. \textbf{245} (2006), 283--288.

\bibitem[EMW09]{EW2}\bysame, \textit{Connectedness in the pluri\-fine
topology}, Functional Analysis and Complex Analysis
(Proceedings, 2008)\inbook Contemporary Math. \textbf{481},\publ
Amer. Math. Soc., Providence, R.I., U.S.A.,  2009.

\bibitem[EMW10]{EW3}\bysame, \textit{Continuity properties of finely
pluri\-sub\-har\-monic functions and pluripolarity}, Indiana Univ. Math. J.  \textbf{59} no.4, (2010).

\bibitem[Fu72]{F1} Fuglede, B., \textit{Finely Harmonic Functions},
Lecture Notes in Math. \textbf{289}, Spring\-er,
Berlin, 1972.

\bibitem[Fu74]{F2}\bysame, \textit{Fonctions harmoniques et fonctions finement
harmoniques},  Ann. Inst. Fourier
(Grenoble) \textbf{24}\issue4  (1974), 77--91.

\bibitem[Fu75]{Fu75} \bysame,\textit{Asymptotic paths for subharmonic functions}, Math. Ann. \textbf{ 213} (1975), 261Ð274.

\bibitem[Fu76]{Fu72a}\bysame, \textit{Finely harmonic mappings and finely harmonic functions}, Ann. Acad. Sci. Fennic\ae, \textbf{2} (1976) 113--127.


\bibitem[Fu81]{F3}\bysame, \textit{Sur les fonctions finement holomorphes},
Ann. Inst. Fourier (Grenoble) \textbf{31}\issue4  (1981), 57--88.

\bibitem[Fu82]{F4}\bysame, \textit{Fonctions BLD et fonctions finement
surharmoniques},\inbook S\'eminaire de Th\'eorie du Potentiel No. 6,
Lecture Notes in Math. \textbf{906},\publ Springer,
Berlin, 1982.


\bibitem[Fu86]{F6}\bysame, \textit{Fonctions finement holomorphes de plusieurs
variables -- un essai}\inbook S\'eminaire d'Analyse
P. Lelong--P. Dolbeault--H. Skoda, 1983/85, pp. 133--145, Lecture
Notes in Math. \textbf{1198}, Springer, Berlin, 1986.

\bibitem[Fu88]{F7}\bysame, \textit{ Finely ho\-lo\-morph\-ic functions. A
survey},  Revue Roumaine Math. Pures
 \textbf{33} (1988), 283--295.

\bibitem[Jo]{J} Josefson, B., \textit{On the equivalence between locally
polar and globally polar sets for pluri\-sub\-har\-monic functions on
$\CC^n$},  Arkiv Mat. \textbf{16} (1978),  109--115.

\bibitem[Kl]{K} Klimek, M., \textit{Pluripotential Theory}, \publ Clarendon
Press, Oxford,  1991.

\bibitem[La]{La} Landkof, N.S., \textit{Foundations of Modern Potential Theory},
 Grundlehren \textbf{180}, \publ Springer,  Berlin  1972.

\bibitem[Lel42]{Lelong42} Lelong, P., {\em
D\'efinition des fonctions plurisousharmoniques}.
C. R. Acad. Sci. Paris 215, (1942), 398--400.

\bibitem[Lel45]{Le1} \bysame , \textit{Les fonctions
pluri\-sous\-har\-moniques},  Ann. Sci. \'Ecole
Norm. Sup. \textbf{62} (1945), 301--328.

\bibitem[Lel56]{Lelong56} \bysame,
{\em Prolongement d'une fonction plurisous-harmonique sur certains ensembles de capacit\'e nulle}. (French)
C. R. Acad. Sci. Paris 242, (1956), 55--57.


\bibitem[Lel68]{Le2}\bysame, \textit{Fonctions plurisousharmoniques et
formes diff\'erentielles positives},  Gordon \& Breach, New
York,  1968.

\bibitem[LMP]{LMP} Levenberg, N.,  G.~Martin, and E.A.~Poletsky,
\textit{Analytic disks and pluripolar sets},
Indiana Univ.~Math.~J., \textbf{41} (1992), 515--532.

\bibitem[LePo99]{LP99} Levenberg, N. and E.A.~Poletsky: Pluripolar hulls, {\em
Mich. Math. J.} {\bf 46 } (1999), 151--162.


\bibitem[Nev]{N} Nevanlinna, R., \textit{\"Uber eine Minimumaufgabe in der theorie der Konformen Abbildung}
Nachr. ges. Wiss. G\"ottingen I, \textbf{37} (1933) 103--115.

\bibitem[PW]{PW} Poletsky, E. and J.~Wiegerinck
\textit{Graphs with multiple sheeted pluripolar hulls}
Ann. Polon. Math. \textbf{ 88} (2006), no. 2, 161--171.

\bibitem[Ran]{Ransf} Ransford, T., \textit{Potential theory in the complex plane}
Cambridge University Press, 1994.

\bibitem[Sa]{Sa} Sadullaev, A.
\textit{Plurisubharmonic measures and capacities on complex manifolds},
Uspekhi Mat. Nauk, 36 (1981), 53--105.

\bibitem[Si]{Si03} Siciak, J.,
\textit{Pluripolar sets and pseudocontinuation}, in: Complex Analysis
and Dynamical Systems II (Nahariya, 2003), Contemp. Math. 382,
Amer. Math. Soc., Providence, RI, 2005, 385--394.  {\bf 22}
(2005),  195--206.


\bibitem[Wi00] {W1} Wiegerinck, J.,
\textit{The pluripolar hull of $\{w=e^{-\frac1z}\}$}, Ark. Mat.,
\textbf{38} (2000), 201--208.

\bibitem[Wi00a]{Wi00} \bysame, \textit{Graphs of holomorphic functions with isolated singularities are complete
pluripolar}, Michigan Math. J., {\bf 47} (2000), 191--197.

\bibitem[Zer]{Ze89} Zeriahi, A., \textit{Ensembles pluripolaires exceptionnels pour la croissance partielle des fonctions
holomorphes},  Ann. Polon. Math. {\bf 50 } (1989), 81--91.

\bibitem[Zwo]{Zw05} Zwonek, W., \textit{A note on pluripolar hulls of graphs of Blaschke products},
Potential Analysis, {\bf 22} (2005), 195--206.


\end{thebibliography}

\end{document}